\pdfoutput=1
\documentclass[a4paper,10pt]{article}
\usepackage[utf8]{inputenc}
\usepackage{latexsym}
\usepackage{amsthm}
\usepackage{amssymb}
\usepackage{amsmath}
\usepackage[all]{xy}
\theoremstyle{plain}
\newtheorem{thm}{Theorem}[section]
\newtheorem{lemma}[thm]{Lemma}
\newtheorem{prop}[thm]{Proposition}
\newtheorem{cor}[thm]{Corollary}
\newtheorem*{thrm-adm}{Theorem \ref{adm}}
\newtheorem*{thrm-vsp}{Theorem \ref{vsp}}
\newtheorem*{thrm-fg-cen}{Theorem \ref{fg-cen-thm}}
\newtheorem*{thrm-rel}{Theorem \ref{rel-graph}}
\newtheorem*{thrm-hoch}{Theorem \ref{hochschild}}
\theoremstyle{definition}
\newtheorem{defn}[thm]{Definition}

\newtheorem{exmp}[thm]{Example}

\theoremstyle{remark}

\newcommand{\La}{\Lambda}

\newcommand{\HH}{\mathrm{HH}^*}
\newcommand{\Ni}{\mathcal{N}}

\newcommand{\op}{\mathrm{op}}
\newcommand{\tbf}{\textbf}
\newcommand{\ori}{\mathfrak{o}}
\newcommand{\tar}{\mathfrak{t}}
\newcommand{\und}{\text{ --- }}
\newcommand{\ev}{\mathrm{ev}}
\newcommand{\gr}{\mathrm{gr}}

\newcommand{\mrm}{\mathrm}
\title{Centers of partly (anti-)commutative quiver algebras 
and finite generation of the Hochschild cohomology ring}
\author{Elin Gawell and Qimh Richey Xantcha}

\begin{document}

\maketitle

\begin{abstract}
A \emph{partly (anti-)commutative quiver algebra} is a quiver algebra bound by 
an \emph{(anti-)commutativity ideal}, that is, a quadratic ideal generated by 
monomials and (anti-)commutativity relations. 
We give a combinatorial description
of the ideals and the associated \emph{generator graphs}, 
from which one can quickly determine 
if the ideal is admissible or not. 
We describe the center of a partly (anti-)commutative quiver
algebra and state necessary and sufficient conditions for the center to be finitely generated
as a $K$-algebra.
As an application, necessary and sufficient conditions for finite generation 
of the Hochschild cohomology ring modulo 
nilpotent elements for a partly 
\mbox{(anti-)}commutative Koszul quiver algebra are given. 
\end{abstract}

\section{Introduction}

The theory of support varieties of finitely generated modules over a finite-dimensional $K$-algebra $\Lambda$ 
using Hochschild cohomology was introduced by Sna\-sh\-all and Solberg in \cite{snashall2004support}. One essential
property needed to apply their theory is that the Hochschild cohomology ring modulo nilpotent elements, 
$\mathrm{HH}^*(\Lambda)/\mathcal{N}$, is finitely
generated as a $K$-algebra. This was known to be true for several types of algebras such as finite-dimensional selfinjective algebras of 
finite representation type over an algebraically closed field \cite{green2003hochschild}, finite-dimensional monomial
algebras \cite{green2004hochschild}, finite-dimensional algebras of finite global dimension \cite{happel1989hochschild},
any block of a group ring of a finite group \cite{evens1961cohomology}, any block of a finite dimensional
cocommutative Hopf algebra \cite{friedlander1997cohomology} and 
when $\Lambda=KQ/I$ is a finite-dimensional Nakayama
algebra over a field $K$ bound by an admissible ideal $I$ generated by a single relation \cite{snashall2004support}. 
Snashall and Solberg conjectured that $\mathrm{HH}^*(\Lambda)/\mathcal{N}$ is always finitely
generated as a $K$-algebra when $\Lambda$ is a finite dimensional algebra over a field $K$.
Xu found a 
counterexample to this conjecture when the field $K$ has characteristic $2$ \cite{xu2008hochschild}, which was 
later generalized to all characteristics
by Snashall \cite{snashall2008support}. 
Several people have been working on finding the
necessary and sufficient conditions to make $\mathrm{HH}^*(\Lambda)/\mathcal{N}$ a finitely generated algebra over 
$K$, for example Parker and Snashall \cite{parker2011family}, and from this work also more classes
of counterexamples have been found \cite{xu2011more}.

For Koszul quiver algebras we have that $\HH(\La)/\Ni\cong Z_\gr(\La^!)/\Ni_Z$ \cite{green1998koszul} and hence knowledge
of the center of a Koszul quiver algebra can be used to determine if the Hochschild cohomology ring is finitely generated or not. 
The simplest kind of quiver algebras, where the ideal is generated by monomials, has been completely investigated 
in \cite{green2004hochschild}. It would therefore be of great interest to examine the next simplest case, 
in which the ideal may also include commutativity or anti-commutativity relations, especially since Xu's first 
counterexample belongs to this class. This is the purpose of the present paper, 
and we call such algebras partly (anti-)commutative. 

We describe the center of partly (anti-)commutative quiver algebras and give necessary
conditions for finite generation of the Hochschild cohomology ring when the algebra is a finite-dimensional
Koszul algebra.

We summarize our main results, for simplicity stated in the commutative case only. Similar results hold
in the anti-commutative case.

Let $Q$ be a finite quiver. A commutativity ideal $I\subseteq KQ$ is an ideal generated by quadratic monomials 
and commutativity relations $ab-ba$. The generator graph, $\Gamma_I$, of $I$ is the graph with 
the arrows of $Q$ as vertices and directed edges $a\to b$ corresponding to non-zero monomial relations
$ab\in I$ and undirected edges $a\und b$ corresponding to commutativity relations $ab-ba\in I$. 

\begin{thrm-adm}
A commutativity (or anti-commutativity) ideal, $I$, is admissible if 
and only if 
the generator graph, $\Gamma_{I^\perp}$, corresponding to the orthogonal ideal $I^\perp$, does 
not contain any directed cycle. 
\end{thrm-adm}

\begin{thrm-vsp}
 Let $I$ be a square-free commutativity ideal.  
The positively graded part of the center of $KQ/I$ has a basis given by all non-zero products $a_1a_2\dots a_k$, of
loops with the same basepoint, such that
\begin{enumerate}
\item All $a_i$ commute non-trivially modulo $I$: $a_ia_j=a_ja_i\neq 0$ for all $i$ and $j$. 
\item For all arrows $b$ in the quiver, one of the following two options holds: 
\begin{itemize}
\item $b$ commutes with all $a_i$. 
\item There exist $i$ and $j$ such that $a_ib=0=ba_j$.
\end{itemize}
\end{enumerate}
\end{thrm-vsp}

Let $Q_x$ be the subquiver
of $Q$ consisting of the point $x$, 
arrows $a$ such that $\mathfrak{o}(a)=x$ or $\mathfrak{t}(a)=x$ and the 
vertices where these arrows have their origin/target. Let $I_x=I\cap KQ_x$. 
A necessary contition for finite generation of $Z(KQ/I)$ is given in:
\begin{thrm-fg-cen}
 Suppose $I$ is a commutativity ideal such that $I^\perp$ is admissible. If $Z(KQ/I)$ is finitely
  generated as a $K$-algebra then for all $x$ in $Q_0$ either $Z(KQ_x/I_x)$ is trivial
  or there exists a non-empty set, $S$, of arrows $a$, such that
  $$I_x\supseteq \langle ab-ba, ca, ad
 \rangle_{\tiny{\begin{array}{l}a\in S,\\ b \textrm{ loop with basepoint }x,\\ c,d\textrm{ arrows such that }\ori(c)\neq\tar(c)=x, \tar(d)\neq\ori(d)=x
           \end{array}}}
.$$
\end{thrm-fg-cen}

In order to also give a sufficient condition, we introduce the relation graph $\Gamma_\mrm{rel}$ as
$\Gamma_I$ enhanced with directed edges $a\to b$ for those monomial $ab$ that are zero already in $KQ$.
\begin{thrm-rel}
  Let $I$ be a commutativity ideal such that $I^\perp$ is admissible. Consider the relation graph 
 $\Gamma_\mrm{rel}$. 
 \begin{itemize}
  \item[(i)] We have that $a_1a_2\dots a_n\in Z(KQ/I)$ if and only if $\{a_1, \dots, a_n\}$ is 
  a clique of loops in $\Gamma_\mrm{rel}$ and for any other vertex $b$
  in the graph either 
  \begin{itemize}
  \item there exists a directed edge from some $a_i$ in the clique to $b$ and a directed
  edge from $b$ to some $a_j$ in the clique \textbf{or}
  
  \item $\{a_1, \dots, a_n, b\}$ is also a clique in $\Gamma_\mrm{rel}$. 
 \end{itemize}
    \item[(ii)] $Z(KQ/I)$ is finitely generated if and only if whenever we have a clique fulfilling the
conditions in (i), each of its vertices fulfills these conditions separately, considered as one-element cliques.  
 \end{itemize}
\end{thrm-rel}

Finally, when $\La=KQ/I$ is a Koszul algebra the following theorem gives us the connection we need to use the
preceding theorems to determine if $\HH(\La)/\Ni$ is finitely generated or not.
\begin{thrm-hoch}
  Let $I$ be an (anti-)commutativity ideal and $\La=KQ/I$ a Koszul algebra. The 
 Hochschild cohomology ring of $\La$ modulo nilpotence, $\HH(\La)/\Ni$, is finitely generated if and only if
 $Z(\La^!)$ is finitely generated. 
\end{thrm-hoch}

\section{(Anti-)commutativity ideals and partly \mbox{(anti-)} commutative quiver algebras}

We always assume that $Q$ is a finite, connected quiver and $K$ is an algebraically closed field. 
If nothing else is written $a_i$ denotes an arrow. When we use the term monomial we always refer to
a path with coefficient $1$. In this paper, a binomial is a sum or difference of two monomials, i.e. a
sum or difference of two paths. 

\begin{defn}
A \textbf{commutativity ideal} is an ideal generated by quadratic monomials $a_ia_j$ and relations of the form $a_ka_l-a_la_k$. 
An \textbf{anti-commutativity
ideal} is an ideal generated by quadratic monomials $a_ia_j$ and relations of the form $a_ka_l+a_la_k$. 
\end{defn}
If $\mrm{char}K=2$ we have that every anti-commutativity ideal is also a commutativity ideal. When working in 
characteristic $2$ one may consider all such ideals to be commutativity ideals. 

\begin{defn}
 A \tbf{minimal generating set} of an (anti-)commutativity ideal is a set 
 $I_2=\{ a_ia_j,a_ka_l-a_la_k\}_{i,j,k,l}$ (or $I_2=\{ a_ia_j,a_ka_l+a_la_k\}_{i,j,k,l}$)
 such that $a_ka_l-a_la_k\in I_2$ (or $a_ka_l+a_la_k\in I_2$) implies
 that $a_ka_l\notin I_2$ and $a_la_k\notin I_2$. 
\end{defn}
The generators of the form $a_ia_j$ are called the monomial generators and the generators of the
form $a_ka_l-a_la_k$ and $a_ka_l+a_la_k$ are called the \mbox{(anti-)} commutativity generators. 

Any $x\in I$ can be written as 
$$x=\sum_{i,j}p_{ij}a_ia_jq_{ij}+\sum_{k,l}p_{kl}(a_ka_l-a_la_k)q_{kl}$$
where $p_{ij},q_{ij},p_{kl}$ and $q_{kl}$ are paths in $Q$.

\begin{defn}
  Assume $a_ia_j\notin I$. If $a_ia_j-a_ja_i\in I_2$ or $a_ia_j+a_ja_i\in I_2$
  we say that the transposition $(a_ia_j)$ is an \tbf{allowed transposition}. 
  Two paths, $p,q$, in $KQ$ are \tbf{equivalent}, denoted $p\sim q$, if $p$ can be obtained from $q$ by allowed 
  transpositions. 
 \end{defn}
It is easy to see that $\sim$ is an equivalence relation.

\begin{lemma}\label{comm-mono}
 Let $I$ be a commutativity ideal with minimal generating set $\langle a_ia_j,a_ka_l-a_la_k\rangle_{i,j,k,l}$. 
 Any monomial $m\in I$ is equivalent to 
 a monomial of the form $pa_ia_jq$, where $a_ia_j\in I$ and
 $p,q$ are paths. Conversely, all monomials of these types lie in $I$.
\end{lemma}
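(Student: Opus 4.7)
The plan is to construct an auxiliary $K$-subspace $J\subseteq KQ$ that turns out to equal $I$ and whose only monomial elements are exactly those in the advertised normal form. The converse direction is immediate: any $pa_ia_jq$ with $a_ia_j\in I$ lies in $I$, and any monomial $\sim$-equivalent to it differs from it by a sum of elements $r(a_ka_l-a_la_k)s\in I$, one summand per allowed transposition.

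For the forward direction, let $S$ be the set of monomials equivalent to some $pa_ia_jq$ with $a_ia_j\in I$, and let $J$ be the $K$-span of $S$ together with all binomials $x-y$ with $x\sim y$. I would first verify $J=I$ in three quick steps: (i) $J$ is an ideal, because left or right multiplication by a path turns a monomial in $S$ into another monomial in $S$ (the same transpositions act on the same subword) and turns a binomial of equivalent monomials into another such binomial; (ii) $J$ contains the minimal generating set $I_2$---the monomials $a_ia_j\in I_2$ belong to $S$, while the commutators $a_ka_l-a_la_k$ are binomials of equivalent monomials---so $J\supseteq I$; (iii) conversely every generator of $J$ lies in $I$ by the converse direction just established, so $J\subseteq I$.

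The main step is then to show that the only monomials lying in $J$ are those in $S$. I would decompose $KQ=\bigoplus_{[m]}V_{[m]}$ as a direct sum over $\sim$-equivalence classes of monomials, where $V_{[m]}$ is the $K$-span of $[m]$. Both kinds of generators of $J$ respect this decomposition: since $S$ is a union of equivalence classes, its elements span $\bigoplus_{[m]\subseteq S}V_{[m]}$, while each binomial $x-y$ with $x\sim y$ lies in $V_{[x]}=V_{[y]}$, so collectively these span $\bigoplus_{[m]}V_{[m]}^{0}$, where $V_{[m]}^{0}\subseteq V_{[m]}$ is the hyperplane of ``coefficient-sum zero'' elements. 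Hence
\[J=\bigoplus_{[m]\subseteq S}V_{[m]}\;\oplus\;\bigoplus_{[m]\not\subseteq S}V_{[m]}^{0}.\]
A single monomial has coefficient-sum $1$ in its own equivalence-class summand and $0$ elsewhere, so it can lie in $J$ only when $[m]\subseteq S$, i.e., when $m\in S$. The only real obstacle is this last piece of bookkeeping---making sure the two kinds of generators really do split cleanly along the equivalence-class decomposition and that no individual monomial lies in any $V_{[m]}^{0}$---but both are routine once the decomposition is in place.
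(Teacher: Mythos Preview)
Your proof is correct and takes a genuinely different, more structural route than the paper's. The paper argues by directly analysing an expression $m=\sum p_{ij}a_ia_jq_{ij}+\sum p_{kl}(a_ka_l-a_la_k)q_{kl}$: if $m$ is not itself one of the monomial terms $p_{ij}a_ia_jq_{ij}$, then (since everything else must cancel) $m$ is reached from one of them through a telescoping chain of single commutator terms $m_1-(m_1-m_2)-\cdots-(m_{i-1}-m)$, giving $m\sim m_1=pa_ia_jq$. You instead build the candidate ideal $J$ from the desired monomials and binomials, verify $J=I$, and then use the decomposition $KQ=\bigoplus_{[m]}V_{[m]}$ along $\sim$-classes together with the coefficient-sum functional to pin down exactly which monomials lie in $J$.

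What your approach buys is precision and reusability. The paper's telescoping chain is left informal---one still has to argue that the cancellations in a general $K$-linear expression can be linearised into such a chain terminating at a monomial-generator term---whereas your formula $J=\bigoplus_{[m]\subseteq S}V_{[m]}\oplus\bigoplus_{[m]\not\subseteq S}V_{[m]}^{0}$ makes the monomial and binomial content of $I$ explicit at once (so the companion description of binomials in $I$ drops out immediately). The paper's approach is shorter and needs no auxiliary ideal or decomposition, at the cost of leaving that cancellation bookkeeping to the reader. At bottom, both arguments extract the same information: projecting the equation onto the $\sim$-class of $m$ forces some $pa_ia_jq$ with $a_ia_j\in I_2$ to share that class; you make this projection explicit, while the paper encodes it in the chain.
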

\begin{proof}
Assume $m\in I$. Then 
$$m=\sum_{i,j}p_{ij}a_ia_jq_{ij}+\sum_{k,l}p_{kl}(a_ka_l-a_la_k)q_{kl}$$
for some paths $p_{ij},q_{ij},p_{kl}$ and $q_{kl}$.
Since $m$ is a monomial either $m=pa_ia_jq$, where $p,q$ paths, or we have cancellations in the 
expression.
Assume that $m\neq pa_ia_jq$ for each $a_ia_j\in I$, then
$m=m_1-(m_1-m_2)-(m_2-m_3)-(m_3-m_4)-\dots (m_{i-1}-m_i)$,
where $m_i=m$ and $m_j-m_{j+1}=p(a_ka_l-a_la_k)q$ for some $p,q\in KQ$ and
$a_ka_l-a_la_k\in I_2$. 
By definition $m_j\sim m_{j+1}$ and
hence $m_1\sim m_2\sim m_3\sim \dots \sim m_i=m$. By assumption $m_1$ is of the form
$pa_ia_jq$, and hence $m$ is equivalent to a monomial of the form $pa_ia_jq$.

If $m\sim pa_ia_jq$ and $a_ia_j\in I$, then it is obvious that $m\in I$.

 \end{proof}
 \begin{lemma}\label{anti-comm-mono}
 Let $I$ be an anti-commutativity ideal with minimal generating set $\langle a_ia_j,a_ka_l+a_la_k\rangle_{i,j,k,l}$. 
 Any monomial $m\in I$ is equivalent to 
 a monomial of the form $\pm pa_ia_jq$, where $a_ia_j\in I$ and
 $p,q$ are paths. Conversely, all monomials of these types lie in $I$.
\end{lemma}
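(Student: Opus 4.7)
The plan is to imitate the proof of Lemma \ref{comm-mono} essentially verbatim, the only new ingredient being careful sign tracking: the anti-commutator $a_ka_l+a_la_k$ produces two monomials of the \emph{same} sign rather than opposite ones, so the cancellations and the resulting equivalence now carry an extra parity factor. First I would write any $m\in I$ in the standard form
$$m=\sum_{i,j}p_{ij}a_ia_jq_{ij}+\sum_{k,l}p_{kl}(a_ka_l+a_la_k)q_{kl},$$
for suitable paths $p_{ij},q_{ij},p_{kl},q_{kl}$ (with $K$-coefficients absorbed as needed), and observe that either some summand in the first sum already equals $m$, in which case we are done, or else $m$ must appear only after cancellation among the expanded monomials on the right.

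In the latter case I would construct, exactly as in Lemma \ref{comm-mono}, a chain $m_1,m_2,\dots,m_i=m$ of monomials in which $m_1=pa_ia_jq$ comes from the first sum (so $a_ia_j\in I$) and each consecutive pair $m_j,m_{j+1}$ differs by a single allowed anti-commutativity transposition, i.e.\ $m_j+m_{j+1}=p(a_ka_l+a_la_k)q\in I$ for suitable paths $p,q$. By the definition of $\sim$ this already gives $m\sim pa_ia_jq$ as paths. Combining the telescoping identity $m_i=m_1+\sum_{j=1}^{i-1}(m_{j+1}-m_j)$ with $m_{j+1}\equiv -m_j\pmod I$ then yields $m\equiv (-1)^{i-1}m_1\pmod I$, so $m$ is equivalent to the signed monomial $\pm pa_ia_jq$ as stated. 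The converse is immediate, since any path of the form $pa_ia_jq$ with $a_ia_j\in I$ manifestly lies in $I$, and every allowed transposition preserves membership in $I$ up to a sign.

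The only real obstacle compared with the commutative case is this sign bookkeeping. In Lemma \ref{comm-mono} the generator itself carries a minus sign, so the telescoping collapses cleanly to $m\equiv m_1\pmod I$; here the generator contributes two monomials of equal sign, so the cancellations in the expansion must be balanced by $K$-coefficients with alternating signs, and tracing the chain accumulates a factor $(-1)^{i-1}$ which is precisely what produces the explicit $\pm$ in the conclusion.
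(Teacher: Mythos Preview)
Your proof is correct and follows exactly the approach the paper takes (the paper's own proof reads simply ``Analogous to the proof of Lemma~\ref{comm-mono}''), with your explicit sign bookkeeping being the natural elaboration of what ``analogous'' means here. One cosmetic point: the congruence $m\equiv(-1)^{i-1}m_1\pmod I$ is literally vacuous since both sides already lie in $I$; what you have actually computed is the parity of the chain of allowed transpositions linking $m$ to $m_1$, and that parity is precisely what the $\pm$ in the statement records.
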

\begin{proof}
Analogous to the proof of Lemma \ref{comm-mono}.
 \end{proof}
 
 \begin{cor}\label{cor-square}
  If $a_1^2a_2a_3a_4\dots a_n\in I$ and $a_1a_2\dots a_n\notin I$ then $a_1^2\in I$.
 \end{cor}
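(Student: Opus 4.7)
The plan is to apply Lemma \ref{comm-mono} (resp.\ Lemma \ref{anti-comm-mono} in the anti-commutative case) to the monomial $m = a_1^2 a_2\cdots a_n$: by hypothesis $m\in I$, so there is an equivalent monomial $m'\sim m$ containing an adjacent pair of arrows $a_i a_j\in I$. Since equivalence preserves the multiset of letters, $m'$ still contains exactly two copies of $a_1$ together with one copy of each of $a_2,\dots,a_n$, and a natural case split is available according to how many of those copies of $a_1$ are used by the distinguished pair $a_ia_j$.

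If the bad pair consists of both copies of $a_1$, then $a_i=a_j=a_1$, hence $a_1^2\in I$, as required. Otherwise at least one copy of $a_1$ in $m'$ lies outside the bad pair; call this occurrence $t_0$. I claim this second case is impossible, since it would force $a_1 a_2\cdots a_n\in I$, contradicting the hypothesis.

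To prove the claim, track each letter of $m$ as a distinct \emph{token} through the sequence of allowed transpositions carrying $m$ to $m'$. Since an allowed transposition requires two distinct arrows, the two $a_1$-tokens never swap each other directly. Now delete $t_0$ uniformly from every word in the sequence. A step of the original sequence either (i) swaps $t_0$ with a neighbour, in which case the deleted word is unchanged before and after the swap, or (ii) swaps two other tokens, in which case the same two distinct commuting arrows are still adjacent in the deleted word, so the swap remains an allowed transposition there. The reduced sequence thus exhibits an equivalence from $a_1 a_2\cdots a_n$ to the word obtained by deleting $t_0$ from $m'$; as $t_0$ was not part of $a_ia_j$, this endpoint still contains the adjacent pair $a_ia_j\in I$ and, by Lemma \ref{comm-mono}, belongs to $I$. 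Hence $a_1 a_2\cdots a_n\in I$, the desired contradiction.

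The only real care required is the token-deletion bookkeeping just sketched; path validity is not an issue because the factor $a_1^2$ forces $a_1$ to be a loop, and in the anti-commutative setting the $\pm$ signs accumulated along the swap sequence are irrelevant for ideal membership.
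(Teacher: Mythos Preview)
Your argument is correct and rests on the same lemmas the paper invokes; the paper's own proof is the one-line remark that the corollary ``follows immediately from \ref{comm-mono} and \ref{anti-comm-mono}'' with no further detail. Your token-deletion bookkeeping is a clean way to make that implication explicit, and the observation that $a_1$ must be a loop (so deletion preserves path validity) is exactly the small check needed.
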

\begin{proof}
 This follows immediately from \ref{comm-mono} and \ref{anti-comm-mono}.
\end{proof}
   
\begin{lemma}\label{comm-bin}
 Let $I$ be a commutativity ideal. Any binomial in $I$ is either the sum of two monomials
 in $I$ or of the form $b-c$, where $b$ and $c$ are monomials and $b\sim c$.

\end{lemma}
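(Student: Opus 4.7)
The argument splits according to whether $b$ and $c$ lie individually in $I$. If $b\in I$ then $\epsilon c = (b+\epsilon c) - b \in I$, whence $c\in I$ too, and we are in the first alternative of the statement; symmetrically if $c\in I$. So assume henceforth that neither $b$ nor $c$ lies in $I$, and the task is to show $\epsilon=-1$ and $b\sim c$.

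The plan is to construct a $K$-linear functional that descends to $KQ/I$ and detects $\sim$-equivalence classes of paths outside $I$. Let $J\subseteq I$ be the two-sided ideal generated by the quadratic monomial generators $a_ia_j\in I_2$. The quotient $KQ/J$ has a $K$-basis indexed by the admissible paths, those containing no monomial generator as a subword. For each $\sim$-equivalence class $C$, define the $K$-linear functional $\pi_C\colon KQ/J\to K$ sending the basis element $[m]$ (with $m$ admissible) to $1$ if $m\in C$ and to $0$ otherwise.

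Expand $b+\epsilon c$ as a $K$-linear combination of generator terms and project to $KQ/J$: the monomial-generator contributions are killed, leaving
$$[b]+\epsilon[c]\;=\;\sum_{\beta} c_{\beta}\bigl([m_{\beta}^+]-[m_{\beta}^-]\bigr),$$
with each pair $(m_\beta^+,m_\beta^-)$ related by a single allowed transposition. Apply $\pi_{[b]}$ to both sides. Each $\beta$-term contributes $0$: if both $m_{\beta}^\pm$ are admissible they share a $\sim$-class and $\pi_{[b]}$ evaluates identically on them; if both are inadmissible the contribution is trivially zero; and if exactly one is admissible, that admissible representative is $\sim$-equivalent to a path in $J$, hence itself lies in $I$ by Lemma~\ref{comm-mono}, so it cannot be $\sim$-equivalent to $b$ (otherwise $b$ would be in $I$).

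Hence the right-hand side vanishes under $\pi_{[b]}$, while the left-hand side evaluates to $1+\epsilon\cdot[c\sim b]$, using Iverson bracket notation. Since $1\neq 0$ in $K$, this forces $c\sim b$ and then $1+\epsilon=0$, i.e.\ $\epsilon=-1$. The main technical point is the asymmetric subcase above: one must invoke Lemma~\ref{comm-mono} to argue that an admissible path $\sim$-equivalent to an inadmissible one is nonetheless in $I$, so that it cannot accidentally fall in the $\sim$-class of $b$.
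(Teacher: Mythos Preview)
Your proof is correct and takes a genuinely different route from the paper. The paper argues by an informal telescoping: assuming $b,c\notin I$, it asserts (without justification) that one may write $b-c=\sum p_{kl}(a_ka_l-a_la_k)q_{kl}$ using only the commutativity generators, and then reorganizes this as a chain $m_1-m_2+m_2-m_3+\cdots+m_{i-1}-m_i$ with $b=m_1$, $c=m_i$, and each consecutive pair differing by one allowed transposition. This is intuitive but leaves implicit why the monomial-generator contributions may be discarded and why the remaining terms assemble into a single chain connecting $b$ to $c$.

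Your approach replaces the combinatorial rearrangement with a linear separation argument: the functionals $\pi_C$ on $KQ/J$ are constant on $\sim$-classes of admissible paths, and the appeal to Lemma~\ref{comm-mono} cleanly disposes of the asymmetric case where exactly one of $m_\beta^{\pm}$ is admissible. This makes the role of the monomial generators transparent---they vanish already in $KQ/J$---and the sign $\epsilon=-1$ drops out automatically, so the cases $b+c$ and $b-c$ are handled uniformly. The cost is a little extra setup; the gain is rigor without any term-by-term bookkeeping. One cosmetic remark: your opening sentence promises a functional that ``descends to $KQ/I$'', but you only define and use $\pi_C$ on $KQ/J$; this is harmless, since the whole computation lives in $KQ/J$, but the phrasing slightly overstates what is needed.
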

\begin{proof} 

Assume $b-c\in I$ with $b,c\notin I$. Then $b-c=\sum_{k,l}p_{kl}(a_ka_l-a_la_k)q_{kl}$, where
$a_ka_l-a_la_k$ are the binomial generators of $I$. We have that $b$ and $c$ are monomials, and hence 
$b-c=m_1-m_2+m_2-m_3+\dots m_{i-1}-m_i$, where $b=m_1$, $c=m_i$ and each 
$m_{j}-m_{j+1}=p(a_ka_l-a_la_k)q$ for some generator $(a_ka_l-a_la_k)$ and some paths $p,q$. 
By definition $m_j\sim m_{j+1}$ for all $j$, and hence $b\sim c$. 

 If $b\sim c$ then it is obvious that $b-c\in I$.  
 \end{proof}

 If $I$ is an anti-commutativity ideal we get a slight modification of the result in Lemma \ref{comm-bin}. 
 
 \begin{lemma}\label{anti-comm-sign}
  Let $I$ be an anti-commutativity ideal. Then any binomial in $I$ is either 
  \begin{itemize}
   \item[(i)] a sum of two monomials in $I$ or
   \item[(ii)] of the form $m+m'$ where $m\sim m'$ are monomials and the number of transpositions 
   used to transform $m$ to $m'$ is odd or
   \item[(iii)] of the form $m-m'$ where $m\sim m'$ are monomials and the number of transpositions 
   used to transform $m$ to $m'$ is even. 
  \end{itemize}
   \end{lemma}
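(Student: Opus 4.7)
The plan is to mirror the proof of Lemma~\ref{comm-bin}, carefully tracking the sign change that distinguishes the anti-commutative setting: each generator $a_ka_l + a_la_k$ contributes a \emph{sum} of two transposed monomials rather than a difference, so the sign of the telescoping combination flips at each step.

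First I would expand a binomial $\beta \in I$ as
$$\beta = \sum_{i} \alpha_i\, p_i a_{u_i} a_{v_i} q_i + \sum_{j} \gamma_j\, p_j (a_{k_j} a_{l_j} + a_{l_j} a_{k_j}) q_j,$$
and observe that the monomial-generator contributions produce only monomials lying in $I$. If after collecting terms $\beta$ assembles into a sum of two monomials in $I$, we are immediately in case (i). Otherwise, writing $\beta = \pm m \pm m'$ with $m, m' \notin I$, the two surviving monomials must come from the anti-commutativity sum after all internal cancellations; by the same telescoping argument as in Lemma~\ref{comm-bin}, the generators involved can then be arranged into a chain $m = m_0, m_1, \ldots, m_r = m'$ in which each consecutive pair differs by exactly one allowed transposition. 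In particular $m \sim m'$ via $r$ transpositions.

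Next I would form the alternating combination
$$(m_0 + m_1) - (m_1 + m_2) + (m_2 + m_3) - \cdots + (-1)^{r-1}(m_{r-1} + m_r),$$
in which the intermediate $m_s$ cancel and only $m_0 + (-1)^{r-1} m_r \in I$ remains. Hence $r$ odd forces $m + m' \in I$ (case (ii)) while $r$ even forces $m - m' \in I$ (case (iii)); the converse direction is immediate from the same displayed sum, since each summand is a $K$-linear combination of generators of $I$.

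The main obstacle is the telescoping step: the assertion that the anti-commutativity generators in the expansion can actually be arranged into a single chain from $m$ to $m'$ without leaving stray uncanceled monomials outside $I$. This is the same combinatorial fact already used in Lemma~\ref{comm-bin}, and the cancellation structure is unchanged in the anti-commutative case, so the only real novelty here is the sign bookkeeping above. In characteristic~$2$, cases (ii) and (iii) coincide, consistent with the remark that every anti-commutativity ideal is then a commutativity ideal.
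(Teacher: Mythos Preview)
Your proposal is correct and follows essentially the same route as the paper: reduce to the case where the binomial comes purely from anti-commutativity generators, arrange these into a telescoping chain $m_0,m_1,\dots,m_r$ of single transpositions, and read off the sign from the alternating sum $\sum_{s=0}^{r-1}(-1)^s(m_s+m_{s+1})=m_0+(-1)^{r-1}m_r$. If anything, you are slightly more explicit than the paper in separating off case~(i) and in writing the alternating combination; the paper's own proof jumps straight to the chain and collapses it in one line.
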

 \begin{proof}
 Suppose $\sum_{k,l}p_{kl}(a_ka_l+a_la_k)q_{kl}$ is a binomial. Then 
 $\sum_{k,l}p_{kl}(a_ka_l+a_la_k)q_{kl}=m_1+m_2-(m_2+m_3)+m_3+\dots+(-1)^{n}(m_{n-1}+m_{n})$, 
 where $m_j\sim m_{j+1}$ for all $j$ and
 any pair $m_j+m_{j+1}$ correspond to exactly $1$ transposition. Hence 
 $\sum_{k,l}p_{kl}(a_ka_l+a_la_k)q_{kl}=m_1+(-1)^{n}m_{n}$, where $m_{n}$ is obtained from $m_1$
 by $n-1$ transpositions.
  \end{proof}

\begin{defn}
Let $I$ be an (anti-)commutativity ideal. The algebra $KQ/I$ is 
then said to be a \tbf{partly (anti-)commutative algebra}.
\end{defn}

\section{Admissible ideals}

If $I$ is an admissible ideal, then $KQ/I$ is a finite-dimensional algebra. In this section we use a 
combinatorial description of the (anti-)commutativity ideal to determine if it is admissible or not.
\begin{defn}
To every commutativity or anti-commutativity ideal $I$ we associate a directed graph $\Gamma_I$, called the 
\textbf{generator graph},
in the following way:
\begin{itemize}
\item To every arrow $a\in Q_1$ there is a vertex which we also call $a$.
\item To every monomial $ab\in I_2$ we associate a directed edge from $a$ to $b$, $a\to b$ 
(the monomial $a^2$ will give rise to a loop 
in the graph).
\item To every (anti-)commutativity relation $ab-ba\in I_2$ or $ab+ba\in I_2$ we associate an 
undirected edge between $a$ and $b$, $a\und b$. 
\end{itemize}
\end{defn}

\begin{exmp}
 \begin{itemize}
  \item[(i)] Let $I=\langle a^2, b^2, ab-ba,ac\rangle$. Then $\Gamma_I$ is
  
  $$\xymatrix{
a  \ar@{-}[rr] \ar[dr] \ar@(l,u)[] & & b \ar@(u,r)[] \\ & c & }$$

  \item[(ii)] Let $I=\langle a^2, b^2, ab,ba,cd,da,bc\rangle$. The $\Gamma_I$ is
  
  $$\xymatrix{
a   \ar@/_/[d] \ar@(l,u)[] & c  \ar[d] \\ b \ar@/_/[u] \ar@(l,d)[]  & d \ar[ul] \\&}$$
  
  \item[(iii)] Let $I=\langle a^2, b^2, c^2, d^2, ab+ba, ac+ca, ad+da, bc+cb, bd+dc \rangle$. Then $\Gamma_I$ is
  
  $$\xymatrix{a \ar@(l,u)[] \ar@{-}[r] \ar@{-}[d] \ar@{-}[dr]& c \ar@{-}[d] \ar@(r,u)[]\\ d \ar@{-}[r]\ar@(l,d)[]& b \ar@(r,d)[]}$$
  
 \end{itemize}~\\

\end{exmp}

If there exists a non-empty sequence of directed edges $a_1\to a_2\to \dots \to a_n\to a_1$ from $a_1$ to $a_1$
we say that the generator graph
contains a \tbf{directed cycle}. 

The generator graph that tells us about the admissibility of $I$ is the generator graph of the orthogonal 
ideal, $I^\perp$.

\begin{defn}
Let $I$ be an (anti-)commutativity ideal with minimal generating set $I_2$. The \tbf{orthogonal ideal} 
$I^\perp$ is defined by
\begin{itemize}
 \item If $ab-ba\in I_2$ then $ab+ba\in I^\perp_2$. 
 \item If $ab+ba\in I_2$, then $ab-ba\in I_2^\perp$.
  \item If $cd\neq 0$ and $cd\neq dc$ in $KQ/I$, 
  then $cd\in I_2^\perp$.
 \end{itemize}

\end{defn}

\begin{exmp}\label{ex1}
  \begin{itemize}
  \item[(i)] Let $Q$ be the following quiver
  $$\xymatrix{\circ \ar@(dr,dl)[]^{b}\ar@(ul,ur)^{a}[] \ar[r]^{c} & \circ}$$
  and $I=\langle a^2, b^2, ab-ba,ac\rangle$. Then $I^\perp=\langle ab+ba, bc\rangle$. 
    
  \item[(ii)] Let $Q$ be the following quiver
  $$\xymatrix{\circ \ar@(dr,dl)[]^{b}\ar@(ul,ur)^{a}[] \ar@/^/[r]^{c} & \circ \ar@/^/[l]^{d}} $$
  and
  $I=\langle a^2, b^2, ab,ba,cd,da,bc\rangle$. Then $I^\perp=\langle ac, db, dc\rangle$.
  
  \item[(iii)] Let $Q$ be the following quiver
  $$\xymatrix{\circ \ar@(dl,dr)[]_{a} \ar@(ur,ul)[]_{b} 
  \ar@(dr,ur)[]_{d} 
  \ar@(ul,dl)[]_{c}
  }$$
  and
  $I=\langle a^2, b^2, c^2, d^2, ab+ba, ac+ca, ad+da, bc+cb, bd+db \rangle$. Then
  $I^\perp=\langle cd,dc, ab-ba, ac-ca, ad-da, bc-cb, bd-db \rangle$.
  
 \end{itemize}~\\
\end{exmp}


\begin{prop}\label{cup-graph}
Any non-zero path $ab$ of length $2$ in $KQ$ is represented in the generator graphs $\Gamma_I$ 
or $\Gamma_{I^\perp}$ by exactly one of the following:
\begin{itemize}
 \item[(i)] Two undirected edges $a\und b$, one in $\Gamma_I$ and one in $\Gamma_{I^\perp}$, or
 \item[(ii)] a directed edge $a\to b$ in $\Gamma_I$, or
 \item[(iii)] a directed edge $a\to b$ in $\Gamma_{I^\perp}$.
\end{itemize}
\end{prop}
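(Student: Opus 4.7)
The plan is to perform a case analysis on a non-zero length-$2$ path $ab$ in $KQ$ based on the status of $ab$ modulo $I$. Since $ab$ has length $2$, Lemmas \ref{comm-mono}--\ref{anti-comm-sign} reduce the problem drastically: the only nontrivial transposition available is $ab\leftrightarrow ba$, so the membership of $ab$ and of the binomials $ab\pm ba$ in $I$ is entirely controlled by which of $ab$, $ba$, $ab-ba$, $ab+ba$ lie in the minimal generating set $I_2$.

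First I would establish exhaustiveness. Using the above observation together with the minimality of $I_2$, exactly one of the following occurs: (a) $ab\in I_2$ as a monomial generator; (b) $ab-ba\in I_2$ or $ab+ba\in I_2$ as an (anti-)commutativity generator; or (c) none of these hold, in which case the cited lemmas give $ab\notin I$ and $ab\not\equiv\pm ba\pmod I$, so that $ab\neq 0$ and $ab\neq ba$ in $KQ/I$, whence $ab\in I_2^\perp$ by the monomial clause in the definition of $I^\perp$.

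Next I match each case to one of the conclusions (i)--(iii). Case (a) places a directed edge $a\to b$ in $\Gamma_I$, which is (ii); the condition $ab=0$ in $KQ/I$ blocks $ab$ from becoming a monomial generator of $I_2^\perp$, and minimality of $I_2$ excludes any (anti-)commutativity relation on $\{a,b\}$, so $\Gamma_{I^\perp}$ carries no edge between them. Case (b) places an undirected edge $a\und b$ in $\Gamma_I$, and the first two bullets of the definition of $I^\perp$ produce the opposite-sign binomial generator, giving the undirected edge $a\und b$ in $\Gamma_{I^\perp}$ as well, which is (i). Case (c) yields $ab\in I_2^\perp$ as a monomial, hence a directed edge $a\to b$ in $\Gamma_{I^\perp}$ and nothing in $\Gamma_I$, giving (iii).

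Finally I would verify mutual exclusivity by re-reading the minimality conditions: the minimality of $I_2$ rules out (a) coinciding with (b), while the zero/equality conditions in $KQ/I$ imposed by (a) or (b) prevent the monomial rule for $I_2^\perp$ from firing, so neither (a) nor (b) can coincide with (c). The subtlest point I anticipate is the anti-commutative case in characteristic $\neq 2$: one must check that the minimality convention on $I_2^\perp$ treats the binomial generator $ab-ba$ as taking precedence over a would-be monomial generator $ab$, consistent with Example \ref{ex1}(iii). Once this convention is made explicit, the remaining verifications are routine bookkeeping.
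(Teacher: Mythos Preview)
Your proposal is correct and follows essentially the same route as the paper: a case split on how the path $ab$ interacts with the defining clauses of $I^\perp$, matched to the three edge types. The paper's own proof is a single sentence pointing at the definition of $I^\perp$ and asserting the trichotomy, whereas you carry out the exhaustiveness and mutual-exclusivity checks explicitly; your appeal to Lemmas~\ref{comm-mono}--\ref{anti-comm-sign} is heavier machinery than strictly needed (for a length-$2$ path in a quadratically generated ideal, membership in $I$ is read off directly from $I_2$), but it does no harm. The definitional wrinkle you flag in the anti-commutative case is genuine: as written, the third bullet in the definition of $I^\perp$ could fire on a pair $a,b$ already covered by the second bullet, and the paper resolves this only implicitly (cf.\ Example~\ref{ex1}(iii)) by letting the binomial clause take precedence.
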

\begin{proof}
 If $ab$ is a non-zero path, then, since $ab$ is a basis element in $KQ_2$, by 
 the definition of $I^\perp$ 
 we have that either $ab\in I$ or $ab\in I^\perp$ or there exist \mbox{(anti-)}
 commutativity
 relations in the ideals $I$ and $I^\perp$. The definition of $\Gamma_I$ tells
 us how to represent these three cases and we get the result in the proposition.
\end{proof}

\begin{lemma}\label{square-free-admissible}
 If $I$ is an admissible commutativity ideal, then $I$ contains all non-zero squares in $KQ$ and
 $I^\perp$ is a square-free anti-commutativity ideal. If $I$ is an admissible anti-commutativity ideal,
 then $I$ contains all non-zero squares in $KQ$ and
 $I^\perp$ is a square-free commutativity ideal. 
\end{lemma}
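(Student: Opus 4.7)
The plan is to separate the statement into its two claims and handle each in turn; both follow from the preceding lemmas and the definition of $I^\perp$, but the first is the substantive part.

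For the claim that $I$ contains every non-zero square, I would argue by contraposition. Let $a\in Q_1$ be such that $a^2\neq 0$ in $KQ$, i.e., a loop, and suppose for contradiction that $a^2\notin I$. The key observation is that the equivalence class of $a^n$ under $\sim$ is the singleton $\{a^n\}$: no allowed transposition applies, since the only arrow that could be swapped with $a$ is $a$ itself, which does not produce a genuine generator $aa-aa=0$ or $aa+aa$ of an (anti-)commu\-tativity ideal. Hence, invoking Lemma \ref{comm-mono} (or Lemma \ref{anti-comm-mono} in the anti-commutative case), if $a^n\in I$ then $a^n$ itself must factor as $p\, a_ia_j\, q$ with $a_ia_j\in I$ a monomial generator; but the only length-two subword of $a^n$ is $a^2$, which by assumption is not in $I$. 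So $a^n\notin I$ for every $n\geq 1$, whence $\rad^n\not\subseteq I$ for any $n$, contradicting admissibility. Therefore $a^2\in I$ for every loop $a$, which is exactly to say that $I$ contains every non-zero square in $KQ$.

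For the claim that $I^\perp$ is square-free, I would simply inspect the three defining rules for $I^\perp_2$. The sign-swapping rules act on genuine binomials $ab\mp ba$ with $a\neq b$ (the degenerate case $a=b$ gives $0$, not a generator). The third rule places $cd$ in $I^\perp_2$ only when $cd\neq dc$ in $KQ/I$, which fails tautologically for $c=d$. Hence no $a^2$ is ever forced into the minimal generating set of $I^\perp$, so $I^\perp$ is square-free. The same sign-swap shows that the orthogonal of a commutativity ideal is an anti-commutativity ideal and conversely, completing the statement.

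The only step that requires any work is the first one, and the main (mild) obstacle there is seeing that the equivalence class of $a^n$ collapses to $\{a^n\}$; once this is combined with the normal form from Lemmas \ref{comm-mono} and \ref{anti-comm-mono}, the rest is immediate from the definition of an admissible ideal.
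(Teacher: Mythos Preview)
Your proof is correct and follows essentially the same approach as the paper: both argue by contradiction via Lemma~\ref{comm-mono} (resp.\ Lemma~\ref{anti-comm-mono}) that $a^2\notin I$ forces $a^n\notin I$ for all $n$, violating admissibility. You simply spell out in more detail why the equivalence class of $a^n$ is a singleton, which the paper leaves implicit. For the square-freeness of $I^\perp$ you argue directly from the defining rules (the condition $cd\neq dc$ fails tautologically for $c=d$), whereas the paper deduces it from the first part (since $a^2\in I$ means $a^2=0$ in $KQ/I$, the condition $cd\neq 0$ fails); both routes are valid, and yours in fact shows $I^\perp$ is square-free regardless of admissibility.
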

\begin{proof}
 Assume that $I$ is admissible, but does not contain $a^2\neq 0$. Then, by Lemma \ref{comm-mono}, 
 it will not contain $a^n$ for any $n$ and hence we get a contradiction to the assumption that $I$ was
 admissible. This means that an admissible ideal $I$ always has to contain all non-zero squares and 
 we can conclude that $I^\perp$ will be square-free.
\end{proof}

\begin{lemma}\label{simple-corollary}
Let $I$ be an (anti-)commutativity ideal. 
 \begin{itemize}
  \item[(i)] If $a_1a_2\dots a_n\in I$ we have that $a_ia_{i+1}\notin I^\perp$ for some $i$.
  \item[(ii)] If $a_1a_2\dots a_n\notin I$, then for any $1\le i\le n-1$ there exists an edge 
  (directed or undirected) from $a_i$ to $a_{i+1}$ in $\Gamma_{I^\perp}$.
 \end{itemize}
\end{lemma}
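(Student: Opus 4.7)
The plan is to treat both parts by contraposition, combining the trichotomy of Proposition \ref{cup-graph} with Lemma \ref{comm-mono} (or Lemma \ref{anti-comm-mono} in the anti-commutative case).

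For (ii), I would use the fact that $a_1 a_2 \cdots a_n$ is a non-zero path in $KQ$, so that each consecutive subpath $a_i a_{i+1}$ is itself a non-zero length-two path and hence falls into exactly one of the three cases of Proposition \ref{cup-graph}. Case (ii) of that proposition would place $a_i a_{i+1}$ in $I$ as a monomial, which via Lemma \ref{comm-mono} (or Lemma \ref{anti-comm-mono}) would force $a_1 \cdots a_n \in I$, contradicting the hypothesis. Hence only cases (i) and (iii) can occur, and both yield an edge (undirected or directed) from $a_i$ to $a_{i+1}$ in $\Gamma_{I^\perp}$.

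For (i), I would again contrapose and assume $a_i a_{i+1} \in I^\perp$ for every $i$. Proposition \ref{cup-graph} then forces each such pair into case (iii), a directed edge of $\Gamma_{I^\perp}$ alone; in particular $a_i a_{i+1}$ appears as a monomial generator of $I^\perp$, and the mutual exclusivity of the three cases means $a_i a_{i+1}$ cannot simultaneously carry any (anti-)commutativity relation in $I$. Consequently no $(a_i, a_{i+1})$ is an allowed transposition, so no swap of adjacent arrows is available and the equivalence class of $a_1 \cdots a_n$ under $\sim$ reduces to the singleton $\{a_1 \cdots a_n\}$. Lemma \ref{comm-mono} (or Lemma \ref{anti-comm-mono}) then tells us that the hypothesis $a_1 \cdots a_n \in I$ forces the path itself to have the form $p \cdot a_k a_l \cdot q$ with $a_k a_l \in I$ a monomial generator; hence some $a_i a_{i+1}$ in the path lies in $I$, contradicting the assumption that every such pair is in $I^\perp$, again by the mutual exclusivity in Proposition \ref{cup-graph}.

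The main subtlety lies in part (i), specifically in leveraging the exclusivity of the trichotomy of Proposition \ref{cup-graph} to ensure that no allowed transpositions are available, so that $a_1 \cdots a_n$ is equivalent only to itself and the conclusion of Lemma \ref{comm-mono} applies without any reshuffling. Once this rigidity is in place, the rest is routine bookkeeping.
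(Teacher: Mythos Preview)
Your proof is correct and follows essentially the same route as the paper. For (ii) the arguments coincide: both reduce to observing that $a_ia_{i+1}\notin I$ and then invoke the trichotomy of Proposition~\ref{cup-graph}. For (i) the paper argues directly---either $a_1\cdots a_n$ already has the form $pa_ka_lq$ with $a_ka_l\in I$, or the first allowed transposition witnesses a consecutive pair carrying an (anti-)commutativity relation in $I$---while you run the contrapositive and use the unavailability of transpositions to conclude the equivalence class is a singleton; the underlying logic and the key ingredients (Lemma~\ref{comm-mono} together with the exclusivity in Proposition~\ref{cup-graph}) are identical.
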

\begin{proof}
 \begin{itemize}
  \item[(i)] By Lemma \ref{comm-mono} we have that $a_1a_2\dots a_n\sim pa_ka_lq$ for some $a_ka_l\in I$ and
  some paths $p$ and $q$. Either $a_1a_2\dots a_n=pa_ka_lq$ and then there exists a pair $a_ia_{i+1}=a_ka_l\in I$ 
  which implies $a_ia_{i+1}\notin I^\perp$, or there exists an allowed transposition $(a_ia_{i+1})$. An allowed 
  transposition corresponds to a commutativity relation in $I$, and $a_ia_{i+1}-a_{i+1}a_i\in I$ gives 
  $a_ia_{i+1}+a_{i+1}a_i\in I^\perp$, which implies that $a_ia_{i+1}\notin I^\perp$.
  
  \item[(ii)] If $a_1a_2\dots a_n\notin I$ we have that $a_ia_{i+1}\notin I$ for all $1\le i\le n-1$. 
  By Proposition \ref{cup-graph} this implies that we either have a directed edge $a_i\to a_{i+1}$ or 
  an undirected edge $a_i\und a_{i+1}$ in 
  $\Gamma_{I^\perp}$.
 \end{itemize}

\end{proof}

\begin{lemma}\label{loops-rel}
 If $a_i\und a_j$ is an undirected edge in $\Gamma_I$, then $a_i$ and $a_j$ are loops
 at the same vertex. 
 For any pair of loops $a_i$ and $a_j$ at the same vertex both $a_ia_j$ and $a_ja_i$
 have to be represented by a directed edge in either 
$\Gamma_I$ or $\Gamma_{I^\perp}$ or undirected edges in both. 
\end{lemma}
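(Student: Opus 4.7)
The plan is to dispatch the two claims in turn.

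For the first, an undirected edge $a_i \und a_j$ in $\Gamma_I$ is drawn, by definition, in response to a binomial generator $a_ia_j \mp a_ja_i \in I_2$. For such a generator to be meaningful, both $a_ia_j$ and $a_ja_i$ must be non-zero paths in $KQ$; otherwise it would degenerate into a monomial relation. This forces $\tar(a_i)=\ori(a_j)$ and $\tar(a_j)=\ori(a_i)$. Setting $x=\ori(a_i)$ and $y=\tar(a_i)$, so that $a_i\colon x\to y$ and $a_j\colon y\to x$, the task reduces to showing $x=y$.

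I would argue by contradiction, supposing $x\neq y$. Then $a_ia_j$ is a cycle based at $x$ while $a_ja_i$ is a cycle based at $y$, living in orthogonal corners $e_x(KQ)e_x$ and $e_y(KQ)e_y$ of the path algebra. Since $I$ is a two-sided ideal containing $a_ia_j \mp a_ja_i$, multiplying on both sides by the idempotent $e_x$ kills $a_ja_i$ but preserves $a_ia_j$, yielding $a_ia_j\in I$; symmetrically $a_ja_i\in I$. But then the binomial generator is reducible to a pair of monomial generators already in the ideal, contradicting the minimality convention that a binomial in $I_2$ cannot be replaced by monomials. Hence $x=y$, and both $a_i,a_j$ are loops at $x$.

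The second assertion follows immediately from Proposition \ref{cup-graph}: when $a_i,a_j$ are loops at the same vertex, both $a_ia_j$ and $a_ja_i$ are non-zero paths of length two, and so each is represented in exactly one of the three prescribed ways---as a directed edge in $\Gamma_I$, a directed edge in $\Gamma_{I^\perp}$, or a matching pair of undirected edges (one in each graph). The only real obstacle in the whole argument is the minimality step in Part 1, which rests on reading the convention correctly: a binomial generator must not be decomposable into monomial generators that already lie in $I$. Everything else is a direct application of the definitions and of Proposition \ref{cup-graph}.
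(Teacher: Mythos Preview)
Your handling of the second claim matches the paper's exactly: both observe that compositions of loops at a common vertex are never trivially zero and then invoke Proposition~\ref{cup-graph}.

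For the first claim, the paper argues differently and more directly. Rather than a contradiction via minimality, it uses the standard convention that a \emph{relation} in a path algebra is a $K$-linear combination of parallel paths (paths sharing the same origin and the same target). From $a_ia_j \mp a_ja_i \in I_2$ this immediately yields $\ori(a_i)=\ori(a_j)$ and $\tar(a_i)=\tar(a_j)$; combined with your composability observations $\tar(a_i)=\ori(a_j)$ and $\tar(a_j)=\ori(a_i)$, all four vertices coincide and both arrows are loops there. No indirect argument is needed.

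You rightly flag the minimality step as the delicate point, and indeed it does not go through on the paper's literal definition. That definition says only that $a_ka_l - a_la_k \in I_2$ forces $a_ka_l \notin I_2$ and $a_la_k \notin I_2$; it does not say $a_ka_l \notin I$. Your idempotent trick correctly produces $a_ia_j \in I$ when $x\neq y$, but this does not contradict minimality as stated: the singleton $I_2=\{a_ia_j - a_ja_i\}$ with $x\neq y$ is formally ``minimal'' in the paper's sense even though $a_ia_j \in I \setminus I_2$. What actually excludes this degenerate scenario is the parallel-path convention for relations, and that is the route the paper takes.
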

\begin{proof}
If $a_i\und a_j$ is an undirected edge in $\Gamma_I$ we have that $a_ia_j-a_ja_i\in I_2$, i.e.
$a_ia_j-a_ja_i$ is a relation which means that $\mathfrak{o}(a_i) = \mathfrak{o}(a_j)$
and $\mathfrak{t}(a_i) = \mathfrak{t}(a_j)$. 
Since $a_ia_j\neq 0$ and $a_ja_i\neq 0$ we have $\tar(a_i)=\ori(a_j)$ and $\tar(a_j)=\ori(a_i)$.
Hence $a_i$ and $a_j$ are loops
 at the same vertex. 

If $a_i$ and $a_j$ are loops at the same vertex, then $a_ia_j$ and $a_ja_i$ are never trivially $0$
and hence, by Proposition \ref{cup-graph}, we have that $a_ia_j$ is represented by a directed edge in either 
$\Gamma_I$ or $\Gamma_{I^\perp}$ or undirected edges in both. 
\end{proof}

\begin{lemma}\label{help-adm}
 Let $I$ be an admissible (anti-)commutativity ideal. 
 Assume that the path $a_1a_2\dots a_na_1$ is not contained in $I$ such that $a_i\neq a_j$ for $i\neq j$. 
 Then there exists a path $a_1a_ia_{i+1}\dots a_ka_1\notin I$ such that $a_ia_{i+1}\dots a_k$ 
 is a subpath of $a_1a_2\dots a_na_1$, $a_1a_i\in I^\perp$ and $a_ka_1\in I^\perp$.
\end{lemma}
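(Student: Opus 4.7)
The plan is to produce the required subpath by sliding the two copies of $a_1$ in $a_1a_2\cdots a_na_1$ inward through allowed transpositions, letting each copy travel as far as the commutation structure permits. By Proposition~\ref{cup-graph}, every non-zero product $a_1a_j$ is either a commutation (undirected in both $\Gamma_I$ and $\Gamma_{I^\perp}$), a directed edge in $\Gamma_I$, or a directed edge in $\Gamma_{I^\perp}$; the middle case would put $a_1a_j\in I$ and so, by Lemma~\ref{comm-mono} (respectively Lemma~\ref{anti-comm-mono}), push the entire path into $I$, contrary to hypothesis. At every step of the sliding procedure the obstruction to further motion is therefore necessarily of the third type, and this is exactly what is needed.

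First, I would slide the leading $a_1$ rightward. Let $i\geq 2$ be the smallest index for which $a_1a_i\in I^\perp$; for $j<i$ the product $a_1a_j$ is a commutation, so iterated allowed transpositions yield an equivalent monomial $a_2a_3\cdots a_{i-1}a_1a_ia_{i+1}\cdots a_na_1$ that still lies outside $I$. Existence of $i$ is where admissibility enters: if $a_1$ commuted with every $a_2,\ldots,a_n$, Lemma~\ref{loops-rel} would make $a_1$ and each $a_j$ loops at a single common vertex, the equivalent path would read $a_2\cdots a_na_1^2$, and Lemma~\ref{square-free-admissible} would force $a_1^2\in I$, contradicting the hypothesis.

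Then I would symmetrically slide the trailing $a_1$ leftward along the rearranged path, letting $k$ be the leftmost position it reaches, so $a_ka_1\in I^\perp$. The slide cannot pass $a_i$, since $a_1a_i\in I^\perp$ rules out commutation between $a_1$ and $a_i$ via Proposition~\ref{cup-graph}, which gives $k\geq i$ automatically. The doubly rearranged path $a_2\cdots a_{i-1}a_1a_ia_{i+1}\cdots a_ka_1a_{k+1}\cdots a_n$ is equivalent to the original and hence not in $I$; applying Lemma~\ref{comm-mono} contrapositively to the inner factor shows $a_1a_ia_{i+1}\cdots a_ka_1\notin I$, which together with $a_1a_i,a_ka_1\in I^\perp$ is the desired conclusion.

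The main delicate point is the geometric bookkeeping: one must check that every intended transposition operates on a genuinely adjacent pair in the current monomial, and that the stopping position records a directed edge in $\Gamma_{I^\perp}$ and not one in $\Gamma_I$. Lemma~\ref{loops-rel} controls the first issue, since commuting arrows share both endpoints and so inductively remain valid neighbours throughout the slide, and Proposition~\ref{cup-graph} controls the second. The anti-commutative case proceeds identically, replacing Lemma~\ref{comm-mono} by Lemma~\ref{anti-comm-mono} and absorbing the sign changes, which do not affect $I$-membership.
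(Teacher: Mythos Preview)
Your proof is correct and follows essentially the same strategy as the paper's: slide the leading $a_1$ rightward until blocked (invoking admissibility via Lemma~\ref{square-free-admissible} to guarantee a blockage), then slide the trailing $a_1$ leftward until blocked, using Proposition~\ref{cup-graph} at each step to certify that a blockage records a directed edge in $\Gamma_{I^\perp}$. Your explicit appeal to Lemma~\ref{loops-rel} to ensure $a_1$ is a loop before invoking Lemma~\ref{square-free-admissible}, and your observation that $a_1a_i\in I^\perp$ already prevents the right-slide from passing position $i$, are slightly cleaner than the paper's handling of the same points, but the argument is otherwise the same.
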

\begin{proof}
Assume $a_1a_j\notin I^\perp$ for all $1\le j\le n$. Let $p_n=a_1a_2\dots a_na_1$.
Then, by Lemma \ref{simple-corollary}(ii), we would have
an allowed transposition $(a_1a_2)$ and hence $p_n$ is equivalent to $a_2a_1a_3a_4\dots a_{n}a_1$. By 
Lemma \ref{simple-corollary}(ii) we then have an edge $a_1a_3$ and since we assume that there 
is no directed edge from $a_1$ we get that $(a_1a_3)$ is an allowed transposition. Inductively
we get that $(a_1a_i)$ are allowed transpositions
for all $2\le i\le n$ and hence
$p_{n}\sim  a_2a_3\dots a_{n}a_1^2$.
By Lemma \ref{square-free-admissible} we have that $a_1^2\in I$ and hence $p_n\in I$ and we have a 
contradiction. Hence 
there exist an $a_i$ such that $a_1a_i\in I^\perp$. Assume that $i$ is as small as possible, i.e. 
$a_1a_m\notin I^\perp$ for $m<i$. Then 
$$a_1a_2\dots a_k\sim a_2a_3\dots a_{i-1}a_1a_ia_{i+1}\dots a_{n}a_1,$$ which 
implies that $a_1a_ia_{i+1}\dots a_{n}a_1\notin I$. 

Now assume that $a_ja_1\notin I^\perp$ for every $1\le j\le n$.
By the same inductive argument as above, get that $(a_ja_1)$ are allowed transpositions
all $1<j<n$.
This gives a contradiction since $a_1a_i\in I^\perp$ and  this implies that 
$a_1a_i-a_ia_1\notin I_2$. Hence
there exists an $a_k$ such that $a_ka_1\in I^\perp$. By assumption above $(a_1a_m)$ are
allowed transpositions for $m<i$, hence $k\ge i$.
Assume that $k$ is as big as possible, i.e.
$a_ja_1\notin I^\perp$ for $j>k$. Then $a_1a_ia_{i+1}\dots a_{n}a_1\sim a_1a_ia_{i+1}\dots 
a_ka_1a_{k+1}a_{k+2}\dots a_{n}$ and hence we have that $a_1a_ia_{i+1}\dots a_ka_1\notin I$.
\end{proof}

\begin{prop}\label{shortcut}
 Let $I$ be an (anti-)commutativity ideal and $a_1a_2\dots a_n\notin I$. 
 If $$a_1a_2\dots a_n\sim a_1a_2\dots a_{i-1}a_{i+1}a_ia_{i+2}a_{i+3}\dots a_n$$
 then there exist edges (directed or undirected) from $a_{i-1}$ to $a_{i+1}$ and from $a_i$ to $a_{i+2}$
 in $\Gamma_{I^\perp}$. 
\end{prop}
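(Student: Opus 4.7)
The plan is to reduce everything to a direct application of Lemma \ref{simple-corollary}(ii) to the \emph{swapped} path $a_1a_2\dots a_{i-1}a_{i+1}a_ia_{i+2}\dots a_n$. The key preparatory observation is that the equivalence relation $\sim$ preserves membership in $I$: by Lemmas \ref{comm-mono} and \ref{anti-comm-mono}, a monomial lies in $I$ exactly when it is equivalent to some monomial of the form $pa_ka_lq$ with $a_ka_l\in I$, and since $\sim$ is an equivalence relation, $p\sim q$ implies that $p\in I$ if and only if $q\in I$ (up to sign, in the anti-commutative case).

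First, I would record that since $a_1a_2\dots a_n\notin I$ by hypothesis and
\[
a_1a_2\dots a_n \;\sim\; a_1a_2\dots a_{i-1}a_{i+1}a_ia_{i+2}a_{i+3}\dots a_n,
\]
the swapped path likewise fails to lie in $I$ (in the anti-commutative case, neither does its negative, which is the same statement about ideal membership).

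Next, I would apply Lemma \ref{simple-corollary}(ii) to the swapped path. That lemma guarantees that between any two consecutive letters of a monomial not in $I$ there is an edge (directed or undirected) in $\Gamma_{I^\perp}$. The letters sitting at positions $i-1$ and $i$ of the swapped path are $a_{i-1}$ and $a_{i+1}$, producing an edge from $a_{i-1}$ to $a_{i+1}$ in $\Gamma_{I^\perp}$. The letters at positions $i+1$ and $i+2$ of the swapped path are $a_i$ and $a_{i+2}$, producing an edge from $a_i$ to $a_{i+2}$ in $\Gamma_{I^\perp}$. This delivers both edges claimed in the proposition.

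There is no real obstacle here; the only point to watch is that the positions $i-1$ and $i+2$ are valid indices of the path, which is implicit in the statement (the transposition involves the interior pair $(a_i,a_{i+1})$, so $2\le i$ and $i+1<n$). Both the commutative and anti-commutative cases are handled uniformly, because the argument uses only that $\sim$ preserves non-membership in $I$ and that Lemma \ref{simple-corollary}(ii) is stated for (anti-)commutativity ideals alike.
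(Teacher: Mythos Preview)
Your argument is correct. It is, however, organized a little differently from the paper's proof, and the difference is worth noting.

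The paper first uses the allowed transposition $(a_ia_{i+1})$ together with Lemma~\ref{loops-rel} to conclude that $a_i$ and $a_{i+1}$ are loops at a common vertex; this is what guarantees that the length-two compositions $a_{i-1}a_{i+1}$ and $a_ia_{i+2}$ are non-zero in $KQ$, so that Proposition~\ref{cup-graph} applies and yields an edge in either $\Gamma_I$ or $\Gamma_{I^\perp}$. Only then does the paper use $a_1\cdots a_n\notin I$ to force both edges into $\Gamma_{I^\perp}$. You bypass the loops discussion entirely by observing that the swapped word is itself a path not in $I$ (your $\sim$-invariance step, correctly extracted from Lemmas~\ref{comm-mono} and \ref{anti-comm-mono}) and then invoking Lemma~\ref{simple-corollary}(ii) directly on the swapped path; that lemma already packages the composability check and the appeal to Proposition~\ref{cup-graph}. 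Your route is slightly more economical, while the paper's version makes explicit the geometric reason the new length-two subwords are composable. Both reach the same conclusion by essentially the same mechanism.
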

\begin{proof}
 Assume
 $$a_1a_2\dots a_n\sim a_1a_2\dots a_{i-1}a_{i+1}a_ia_{i+2}a_{i+3}\dots a_n.$$
 Then we have an allowed transposition $(a_ia_{i+1})$ which implies that $a_ia_{i+1}-a_{i+1}a_i\in I$ and hence
 $a_i$ and $a_{i+1}$ are loops. By Lemma
\ref{loops-rel} we have that there exist edges from $a_{i-1}$ to $a_{i+1}$ and from $a_i$ to $a_{i+2}$
in either $\Gamma_{I^\perp}$ or $\Gamma_I$. Since we assumed that $a_1a_2\dots a_n\notin I$
we get that the edges have to lie in $\Gamma_{I^\perp}$.
\end{proof}
It might be good to visualize what this proposition acctually tells us. Assume that 
$a_1a_2\dots a_n\notin I$. Then, by Lemma \ref{simple-corollary}(ii) we have that
there exist edges $a_ia_{i+1}$ in the generator graph $\Gamma_{I^\perp}$. They can be either directed
or undirected depending on the generators of $I$, but for example it might look like this:
$$\xymatrix{ a_1\ar[r]&a_2\ar@{-}[r] & a_3\ar@{-}[r] & a_4\ar[r] &\dots \ar[r]& a_n.    
}$$
Now, what Proposition \ref{shortcut} says is that whenever we have an undirected edge $a_i\und a_{i+1}$,
we also have edges from $a_{i-1}$ to $a_{i+1}$ and from $a_i$ to $a_{i+2}$. Since the edge $a_2\und a_3$ 
is undirected in
the graph above we get two more edges:
$$\xymatrix{a_{1} \ar@{--}@/_1pc/[rr] \ar[r]& a_2 \ar@{-}[r] \ar@{--}@/^1pc/[rr] 
& a_{3}\ar@{-}[r] & a_4 \ar[r]&\dots\ar[r] & a_n. }$$
These edges can be either directed or undirected, depending on the ideal $I$. For example
if $a_1\ \to a_3$ is directed and $a_2\und a_4$ is undirected we get the following picture:
$$\xymatrix{a_{1} \ar@/_1pc/[rr] \ar[r]& a_2 \ar@{-}[r] \ar@{-}@/^1pc/[rr] 
& a_{3}\ar@{-}[r] & a_4 \ar[r]&\dots\ar[r] & a_n. }$$
We can use these new edges to construct more paths that are not contained in $I$, as seen in the 
following corollary.

\begin{cor}\label{shorter-path}
 Let $I$ be an (anti-)commutativity ideal. Let $a_1a_2\dots a_n\notin I$ be a path and let $a_i\und a_{i+1}$ be an undirected
 edge in the generator graph $\Gamma_{I^\perp}$. Then $$a_1a_2\dots a_{i-1}a_{i+1}a_{i+2}\dots a_n\notin I.$$ 
\end{cor}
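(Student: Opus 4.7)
The plan is to argue by contradiction: suppose $p := a_1 a_2 \dots a_{i-1} a_{i+1} \dots a_n \in I$ and derive $a_1 \dots a_n \in I$, contradicting the hypothesis. The undirected edge $a_i \und a_{i+1}$ in $\Gamma_{I^\perp}$ corresponds, by the definition of the orthogonal ideal, to an (anti-)commutativity generator of $I$ with opposite sign, so the transposition $(a_i, a_{i+1})$ is allowed in $I$. Setting $q := a_1 \dots a_{i-1} a_{i+1} a_i a_{i+2} \dots a_n$, this gives $q \sim \pm a_1 \dots a_n$, and Lemma \ref{comm-bin} (or Lemma \ref{anti-comm-sign}) then forces $q \notin I$. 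The task reduces to showing $q \in I$ under the assumption $p \in I$.

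By Lemma \ref{comm-mono} (or Lemma \ref{anti-comm-mono}), $p \in I$ yields a chain of allowed transpositions $p = p_0 \sim p_1 \sim \dots \sim p_s$ with $p_s = \alpha \cdot a_u a_v \cdot \beta$ for some monomial generator $a_u a_v \in I$. I will simulate this chain on $q$, producing a parallel chain $q = q_0 \sim q_1 \sim \dots \sim q_s$ where each $q_t$ is $p_t$ with the extra letter $a_i$ inserted at some position. If the simulation succeeds, then $q_s$ still contains $a_u a_v$ as an adjacent subpath, so $q_s \in I$, and $q \sim q_s$ then forces $q \in I$, the contradiction.

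A transposition $p_{t-1} \to p_t$ at positions $(k, k+1)$ swapping letters $c, d$ can be applied verbatim in $q_{t-1}$ unless $a_i$ lies strictly between $c$ and $d$. In the problematic configuration, the valid-path condition on $q_{t-1}$ together with the fact that $c, d$ must be loops at a common vertex (a prerequisite for the allowed transposition in $p$) jointly force $c, d, a_i$ all to be loops at the vertex $v = \ori(a_i)$. Lemma \ref{loops-rel} combined with Proposition \ref{cup-graph} then classifies the pairs $(c, a_i)$ and $(a_i, d)$: if one of them is a monomial in $I$ (a directed edge in $\Gamma_I$), then $q_{t-1}$ already contains a monomial generator and lies in $I$; if one of them is an allowed transposition in $I$ (an undirected edge in $\Gamma_I$), then $a_i$ can be shifted past $c$ or $d$ and the swap executed.

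The hard step will be excluding the residual sub-case in which both $(c, a_i)$ and $(a_i, d)$ are represented only as directed edges in $\Gamma_{I^\perp}$. I expect this to be handled by exploiting the commutativity $a_i a_{i+1} \pm a_{i+1} a_i \in I$ to relocate $a_i$ adjacent to $a_{i+1}$ before the problematic step, so the stuck configuration never arises, or by applying Proposition \ref{shortcut} iteratively to the triple $(c, a_i, d)$ inside the ambient path to produce further edges in $\Gamma_{I^\perp}$ that rule out the stuck case.
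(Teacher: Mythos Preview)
Your proposal is not a complete proof: the final paragraph explicitly flags an unresolved ``hard step'' and offers only speculation. Neither suggested remedy works. For the first, keeping $a_i$ adjacent to $a_{i+1}$ throughout the simulation already fails at the first problematic move: with $q_0=\ldots a_{i+1}\,a_i\,a_{i+2}\ldots$ and the swap $(a_{i+1}a_{i+2})$ occurring in $p_0$, the only way to execute the swap in $q$ is to first shift $a_i$ to the left of $a_{i+1}$ (the one transposition you are guaranteed), after which $a_i$ sits between $a_{i-1}$ and $a_{i+2}$; re-attaching it to $a_{i+1}$ would require the transposition $(a_ia_{i+2})$, which is nowhere assumed. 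A subsequent swap in $p$ can then trap $a_i$ between letters $c,d$ with which it neither commutes nor forms a monomial generator of $I$ --- exactly the residual case you cannot handle. For the second remedy, Proposition~\ref{shortcut} requires an undirected edge between \emph{adjacent} arrows of the path in question, but in $q_{t-1}=\ldots c\,a_i\,d\ldots$ the only undirected pair $c\und d$ is separated by $a_i$, while the adjacent pairs $(c,a_i)$ and $(a_i,d)$ are directed in $\Gamma_{I^\perp}$ by hypothesis; so the proposition does not apply and produces no new edges to ``rule out the stuck case''.

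The paper's argument is of a completely different shape and avoids any insertion or simulation. It works directly in $\Gamma_{I^\perp}$: Lemma~\ref{simple-corollary}(ii) gives an edge from $a_j$ to $a_{j+1}$ for every $j$, Proposition~\ref{shortcut} supplies the one missing edge from $a_{i-1}$ to $a_{i+1}$, and since an edge from $a_j$ to $a_k$ in $\Gamma_{I^\perp}$ forces $a_ja_k\notin I$, the paper concludes that every consecutive pair in the shortened path $a_1\cdots a_{i-1}a_{i+1}\cdots a_n$ lies outside $I$ and draws the conclusion from that. Rather than trying to repair the simulation, you should look for a direct argument along these graph-theoretic lines.
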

\begin{proof}
 If there exists an edge from $a_j$ to $a_k$ in $\Gamma_{I^\perp}$ then we have that $a_ja_k\notin I$. 
 By Lemma \ref{simple-corollary}(ii) we have edges from $a_j$ to $a_{j+1}$ for any $1\le j\le n-1$ in
 $\Gamma_{I^\perp}$ and
 by Proposition \ref{shortcut} we have that there exists an edge from $a_{i-1}$ to $a_{i+1}$ in $\Gamma_{I^\perp}$.
 Hence $a_1a_2\dots a_{i-1}a_{i+1}a_{i+2}\dots a_n\notin I$. 
\end{proof}

\begin{defn}
 If $a_1a_2\dots a_n\sim a_1a_2\dots a_{i-1}a_{i+1}a_ia_{i+2}a_{i+3}\dots a_n$
 we say that there exist a \tbf{shortcut} between $a_{i-1}$ and $a_{i+1}$ in the generator graph $\Gamma_{I^\perp}$. 
\end{defn}

The following theorem now gives us a way to determine if a commutativity (or anti-commutativity) ideal $I$ is admissible
by just a quick look at the generator graph for the orthogonal ideal $I^\perp$.   
\begin{thm}\label{adm}
A commutativity (or anti-commutativity) ideal, $I$, is admissible if 
and only if 
the generator graph, $\Gamma_{I^\perp}$, corresponding to the orthogonal ideal $I^\perp$, does 
not contain any directed cycle. 
\end{thm}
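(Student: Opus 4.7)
I split the proof into two implications, each handled by contrapositive.

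Forward direction: assume $\Gamma_{I^\perp}$ contains a directed cycle $a_1\to a_2\to\cdots\to a_n\to a_1$ and exhibit paths of unbounded length not in $I$. Each directed edge $a_i\to a_{i+1}$ encodes a monomial generator $a_ia_{i+1}$ of $I^\perp$, which is thus a composable nonzero path lying outside $I$. The cycle lifts to a closed path $c=a_1a_2\cdots a_n$ in $Q$, and I claim $c^k\notin I$ for all $k$. Were $c^k\in I$, Lemma~\ref{comm-mono} (or Lemma~\ref{anti-comm-mono}) would produce a $\sim$-equivalent monomial containing a factor from $I_2$; but every allowed transposition in $c^k$ corresponds to a binomial generator of $I$ and hence to an \emph{undirected} edge of $\Gamma_{I^\perp}$ between two consecutive cycle arrows, contradicting their directedness. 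So $c^k$ is $\sim$-equivalent only to itself, no adjacent pair in it lies in $I$, and $c^k\notin I$. The degenerate case $n=1$ (a self-loop, meaning $a^2\notin I$ for a loop $a$) is handled the same way: $a^k\notin I$ for every $k$, since the all-$a$ monomial admits no non-trivial transpositions.

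Reverse direction: assume contrapositively that $I$ is not admissible and produce a directed cycle in $\Gamma_{I^\perp}$. Choose a path $p=a_1\cdots a_N\notin I$ with $N>|Q_1|$; by pigeonhole, select $i<j$ with $a_i=a_j$ and $j-i$ minimal. The subpath $s=a_i\cdots a_j$ lies outside $I$ (since $I$ is a two-sided ideal, any sub-factor of a monomial outside $I$ is itself outside $I$) and has distinct interior arrows, all different from $a:=a_i$. If $j=i+1$ then $a^2\notin I$ for the loop $a$, giving the desired self-loop at $a$ in $\Gamma_{I^\perp}$. Otherwise Lemma~\ref{simple-corollary}(ii) equips every consecutive pair of $s$ with an edge (directed or undirected) of $\Gamma_{I^\perp}$, producing a closed walk in $\Gamma_{I^\perp}$ based at $a$. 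I then reduce this walk to a directed cycle by iteratively excising one arrow from each undirected pair via Corollary~\ref{shorter-path}, whose applicability at each step is guaranteed by Proposition~\ref{shortcut} (which also confirms that the shortened walk remains closed and outside $I$).

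The main obstacle is preserving the closed-walk condition when an undirected edge meets the matched endpoint $a$: naively applying Corollary~\ref{shorter-path} there would remove $a$ itself and break cyclicity. Here Lemma~\ref{loops-rel} saves us—the neighbor of $a$ across the undirected edge is a loop at the same vertex as $a$—so the shortcut edge supplied by Proposition~\ref{shortcut} permits re-anchoring the cyclic subpath at that neighbor. An induction on the length of the cyclic subpath, using the immediate-repeat self-loop case above as base, shows the reduction terminates in a genuine directed cycle of $\Gamma_{I^\perp}$, contradicting acyclicity.
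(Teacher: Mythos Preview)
Your argument is correct and follows the same overall two-direction scheme as the paper. The forward direction is essentially identical (the paper cites Lemma~\ref{simple-corollary}(i), which is itself derived from Lemma~\ref{comm-mono}/\ref{anti-comm-mono}, so your direct appeal to the latter is equivalent).

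In the reverse direction both proofs extract a cyclic subpath with distinct interior arrows and then shorten past undirected edges via Corollary~\ref{shorter-path} until a directed cycle emerges. The difference is in how the boundary is handled. The paper \emph{preprocesses} with Lemma~\ref{help-adm} so that the two edges incident to the anchor $a$ are already directed; the induction then only ever meets undirected edges in the interior, where Corollary~\ref{shorter-path} applies verbatim. You skip Lemma~\ref{help-adm} and treat an undirected boundary edge during the induction itself. That is a legitimate alternative, but your phrase ``re-anchoring the cyclic subpath at that neighbor'' does not describe a working step: the neighbor $b_1$ occurs only once in $s=ab_1\cdots b_ma$, so one cannot literally re-anchor there. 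What does work is to remove $b_1$ rather than $a$: transpose the allowed pair to get $b_1ab_2\cdots b_ma\notin I$ and take the subfactor $ab_2\cdots b_ma\notin I$, which is again a closed $a$-anchored path of smaller length (this is precisely the \emph{second} shortcut edge $a\to b_2$ in Proposition~\ref{shortcut}; the first shortcut is vacuous at position $i=1$). The case $b_m\und a$ is already covered by Corollary~\ref{shorter-path} as stated, since it deletes $b_m$. With this clarification your induction terminates in the self-loop base case. What your route buys is avoiding Lemma~\ref{help-adm} altogether; this is not merely cosmetic, since that lemma is stated for \emph{admissible} $I$, whereas here $I$ is assumed non-admissible---the paper's invocation is salvageable (the proof of Lemma~\ref{help-adm} only needs $a_1^2\in I$, and $a_1^2\notin I$ gives a self-loop immediately), but your organization sidesteps the issue cleanly.
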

\begin{proof}
An ideal fails to be admissible if for any $n$ there is at least one path, $p_n$, of length $n$ such that 
$p_n\notin I$. 
It follows from Lemma \ref{cup-graph} and the definition of $I^\perp$ 
that directed edges in $\Gamma_{I^\perp}$ correspond to paths of 
length $2$ not
contained in $I$. 
A path, $a_1a_2\dots a_n$ built from generators $a_ia_{i+1}\in I^\perp$ is clearly not contained in $I$ 
(by Lemma \ref{simple-corollary}(i)).
If $\Gamma_{I^\perp}$ contains a directed cycle we hence 
can construct paths of arbitrary length that are not contained in $I$. 

Assume that 
for any $n\ge 1$ we are able to find a 
path, $p_n\notin I$, of length $n$. Suppose $n> |Q_1|$, then there exists an arrow $a_1$ such that $a_1$
repeats in $p_n$. Let $a_1$ be the first arrow that repeats and let $p_{k}$ be the part 
of $p_n$ that starts with the first copy of $a_1$ and ends with the second copy $a_{k}=a_1$.
By Lemma \ref{help-adm} there exists a path 
$p=a_1a_2\dots a_{k-1}a_1\notin I$ such that $a_2a_3\dots a_{k-1}$ is a subpath of $p_n$, 
$a_1a_2\in I^\perp$ and $a_{k-1}a_1\in I^\perp$.

We will prove the existence of a directed cycle in $\Gamma_{I^\perp}$ by showing that there exist a directed shortcut
past any undirected edge in $\Gamma_{I^\perp}$. This will be done by induction over the length of $p$. 
If $\mathfrak{l}(p)=4$, then $p=a_1a_2a_3a_1$ with $a_1a_2\in I^\perp$ and $a_3a_1\in I^\perp$. By 
Lemma \ref{simple-corollary} (ii) we have an edge between $a_2$ and $a_3$ in $\Gamma_{I^\perp}$. 
If $a_2a_3\in I^\perp$ we
have no undirected edges in the path $a_1a_2a_3a_1$ and hence we have a directed cycle in $\Gamma_{I^\perp}$. 
Assume that $a_2\und a_3$ is an undirected edge. Then
$p\sim a_1a_3a_2a_1$ and by Lemma \ref{simple-corollary} (ii) we have that there exist an edge from $a_1$ to $a_3$ in
$\Gamma_{I^\perp}$. By Proposition \ref{cup-graph} this edge can't be undirected and hence we have a directed shortcut 
past the undirected edge $a_2\und a_3$ that gives a directed cycle in $\Gamma_{I^\perp}$. 

Assume that if $\mathfrak{l}(p)<k$ then we can find a directed shortcut past any undirected edge. Note 
that a sequence of directed shortcuts in a path implies a directed cycle 
in $\Gamma_{I^\perp}$.

Assume $p=a_1a_2\dots a_{k}$, where $a_k=a_1$, $a_1a_2\in I^\perp$ and $a_{k-1}a_1\in I^\perp$. Assume that
$a_i\und a_{i+1}$ is a pair of consecutive arrows in $p$ that correspond to an undirected edge in $\Gamma_{I^\perp}$. 
By Corollary \ref{shorter-path} we then have that $$a_1a_2\dots a_{i-1}a_{i+1}a_{i+2}\dots a_{k-1}a_1\notin I.$$
This path is a shorter path where we by our assumption can find directed shortcuts past any undirected edge.
Hence $\Gamma_{I^\perp}$ contains a directed cycle. \end{proof}

\begin{exmp}
 \begin{itemize}
  \item[(i)] Consider $I^\perp$ from Example \ref{ex1}(i), then $\Gamma_{I^\perp}$ is
    $$\xymatrix{
a  \ar@{-}[rr]   & & b \ar[dl] \\ & c & }$$
and hence $I$ is admissible.

  \item[(ii)] Consider $I^\perp$ from Example \ref{ex1}(ii), then $\Gamma_{I^\perp}$ is
    $$\xymatrix{
a   \ar[r] & c   \\ b  & d \ar[u] \ar[l] \\&}$$
and hence $I$ is admissible.
  
  \item[(iii)] Consider $I^\perp$ from Example \ref{ex1}(ii), then $\Gamma_{I^\perp}$ is
  $$\xymatrix{a \ar@{-}[r] \ar@{-}[d]  \ar@{-}[dr]& c \ar@/_/[dl] \ar@{-}[d] \\ d \ar@/_/[ur]\ar@{-}[r]& b }$$
  This graph has a directed cycle $c\to d\to c$, and hence $I$ is not admissible. 
 \end{itemize}~\\

\end{exmp}

\section{Centers of partly (anti-)commutative quiver algebras}

The center of a graded algebra is also a graded algebra. 
If $|Q_0|>1$ and the quiver is connected
we have that no vertex lies in the
center, since for any arrow $a$ such that $\ori(a)\neq \tar(a)=x$ we have that $xa=0$ and $ax=a$.
We have that $Z^0(KQ/I)$ is generated as a $K$-algebra by the identity element and hence $Z^0(KQ/I)=K$ 
for all partly (anti-)commutative quiver algebras. 

\begin{defn}
 Let the positively graded part of the center be denoted by $Z^+(KQ/I)$, i.e. $$Z^+(KQ/I)=\bigoplus_{i\ge1}Z^i(KQ/I).$$ 
\end{defn}

We have that $Z(KQ/I)=Z^0(KQ/I)\bigoplus Z^+(KQ/I)$, and $Z^0(KQ/I)=K$ for all connected quiver algebras $KQ/I$, 
hence the rest of this section is devoted to describing $Z^+(KQ/I)$.   

We have seen several ways to decompose $KQ/I$ as a vector space, and in this chapter we need one more.
\begin{defn}
 Let $p$ be a non-zero monomial in $KQ$ of length at least $1$. Let $V_p$ be the vector subspace generated by the monomials consisting of
 permutations of the arrows of $p$ (many of them can be $0$ for trivial reasons or because they lie in $I$). 
 A linear combination of elements belonging to the same $V_p$ is said to be 
 \tbf{permutation homogeneous}. 
 An ideal generated by permutation 
 homogeneous elements is a \tbf{permutation homogeneous ideal}. 
\end{defn}
If we pick representatives $p$ for each permutation homogeneous set we can easily see that 
each monomial of positive grade lies in a specific $V_p$, i.e. $KQ=K\oplus \bigoplus_pV_p$.
Note that both commutativity ideals and anti-commutativity ideals are permutation homogeneous ideals.
\begin{lemma}\label{vsp-permhomo}
 Let $I$ be a permutation homogeneous ideal. Then 
 $$KQ/I\cong K\oplus \bigoplus_p V_p/(I\cap V_p)$$
 as a sum of vector spaces.
\end{lemma}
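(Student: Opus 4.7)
The plan is to exploit the compatibility between the decomposition $KQ = K \oplus \bigoplus_p V_p$ and the ideal $I$, which is forced by the existence of a permutation homogeneous generating set. Specifically, I would prove that $I = \bigoplus_p (I \cap V_p)$ and then deduce the isomorphism from the fact that quotients distribute over direct sums.

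First I would establish the following key observation: if $r, s \in KQ$ are monomials and $g \in V_q$ is a permutation homogeneous element, then $rgs$ either vanishes or lies in a single $V_{q'}$. Writing $g = \sum_j c_j q_j$ with each $q_j$ a permutation of $q$, we have $rgs = \sum_j c_j \, r q_j s$. Every nonzero summand $r q_j s$ is a monomial whose multiset of arrows is the disjoint union of the multisets of $r$, $q$, and $s$; hence all surviving summands share a common $V_{q'}$.

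Given an arbitrary $x \in I$, fix a permutation homogeneous generating set $\{g_\alpha\}$ and express $x = \sum_i \lambda_i \, r_i g_{\alpha_i} s_i$ with $r_i, s_i$ monomials in $KQ$ and $\lambda_i \in K$. Since each $g_\alpha$ lies in some $V_{q_\alpha}$ with $q_\alpha$ of positive length, the key observation shows that each summand either vanishes or lies in some specific $V_{q_i'}$, and in particular has positive degree. Collecting summands by $V_p$-component yields a decomposition $x = \sum_p x_p$ with $x_p \in V_p$, and each $x_p$ also belongs to $I$, since it is a $K$-linear combination of products of generators of $I$ with monomials. Thus $I = \bigoplus_p (I \cap V_p)$, and so
$$KQ/I = \left(K \oplus \bigoplus_p V_p\right) \Big/ \bigoplus_p (I \cap V_p) \cong K \oplus \bigoplus_p V_p/(I \cap V_p).$$

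The main obstacle is the key observation above: one must handle the fact that individual summands $r q_j s$ may vanish when the underlying paths fail to compose. The resolution is that whichever summands do survive still carry identical multisets of arrows and therefore inhabit a single $V_{q'}$; no cancellation among them can produce elements outside that subspace.
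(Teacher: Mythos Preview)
Your proposal is correct and follows exactly the same route as the paper: establish $I=\bigoplus_p(I\cap V_p)$ and then pass to the quotient. The paper's own proof compresses this into the single sentence ``If $I$ is permutation homogeneous it is clear that $I=\bigoplus_p I\cap V_p$'', whereas you have unpacked why it is clear by tracking how multiplying a permutation homogeneous generator by monomials lands in a single $V_{q'}$; your version is more detailed but not materially different.
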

\begin{proof}
 If $I$ is permutation homogeneous it is clear that $I=\bigoplus_p I\cap V_p$ as a sum of vector spaces. 
 Hence $KQ/I\cong K\oplus \bigoplus_p V_p/(I\cap V_p)$.  
\end{proof}

\begin{lemma}\label{sums}
Let $I$ be a permutation homogeneous ideal. The positively graded part of the center, $Z^+(KQ/I)$, of $KQ/I$  is spanned by 
permutation homogeneous elements.
\end{lemma}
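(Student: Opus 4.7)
The plan is to leverage the direct-sum decomposition provided by Lemma \ref{vsp-permhomo} and show that the projection of a central element onto each permutation-homogeneous summand is itself central. Throughout, it is more convenient to index the summands by multisets of arrows: let $V_M$ denote the span of all non-zero monomials whose multiset of arrows equals $M$, so that $V_p = V_M$ whenever $p$ uses the multiset of arrows $M$.

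First I would take an arbitrary $z \in Z^+(KQ/I)$ and, using Lemma \ref{vsp-permhomo}, write
\[
z = \sum_M z_M, \qquad z_M \in V_M/(I \cap V_M),
\]
where the sum runs over finitely many non-empty multisets of arrows. The aim is to show that each $z_M$ already lies in $Z^+(KQ/I)$; this is exactly what it means for $Z^+$ to be spanned by permutation-homogeneous elements.

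Next I would examine how multiplication interacts with the decomposition. For any vertex $e \in Q_0$ and any monomial $m \in V_M$, both $em$ and $me$ are either $0$ or a monomial in $V_M$ (multiplication by an idempotent only selects monomials with a prescribed origin or target, and does not change the arrow multiset). For any arrow $a \in Q_1$ and any monomial $m \in V_M$, both $am$ and $ma$, if non-zero, are monomials in $V_{M \cup \{a\}}$. Because $I$ is permutation homogeneous, the intersections $I \cap V_M$ fit together correctly in the quotient, so these observations carry over to $KQ/I$ via Lemma \ref{vsp-permhomo}.

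From $ez = ze$ for every vertex $e$, the identity $\sum_M(ez_M - z_M e)=0$ is a relation inside $\bigoplus_M V_M/(I \cap V_M)$, whence $ez_M = z_M e$ for every $M$. From $az = za$ for every arrow $a$, the identity $\sum_M(a z_M - z_M a)=0$ lives in $\bigoplus_{M'} V_{M'}/(I \cap V_{M'})$; but the map $M \mapsto M \cup \{a\}$ is injective on multisets, so the summands $a z_M - z_M a \in V_{M \cup \{a\}}/(I \cap V_{M \cup \{a\}})$ are supported in distinct pieces of the direct sum, forcing $a z_M = z_M a$ for every $M$. Hence each $z_M$ commutes with every vertex and every arrow of $KQ/I$, so $z_M \in Z(KQ/I)$, and $z$ is exhibited as a sum of permutation-homogeneous central elements.

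The only delicate step is the bookkeeping of the last paragraph: one must ensure that in the quotient $KQ/I$ the relation $\sum_M(a z_M - z_M a)=0$ really does split summand by summand. This is where permutation-homogeneity of $I$ is essential, since otherwise an element of $I$ could mix contributions from different multisets. Lemma \ref{vsp-permhomo} is precisely the statement that this cannot happen, so the argument goes through without further difficulty.
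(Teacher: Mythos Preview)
Your proof is correct and follows essentially the same approach as the paper's: decompose a central element via Lemma~\ref{vsp-permhomo} and use the directness of the sum to conclude that each permutation-homogeneous component is itself central. You are in fact more careful than the paper, explicitly noting that $M\mapsto M\cup\{a\}$ is injective (which is why the summands do not collide after multiplying by an arrow) and separately treating commutation with vertices.
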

\begin{proof}
Assume $\sigma\in Z^+(KQ/I)$. Then $\sigma=\rho_1+\rho_2+\dots \rho_n$ where each 
$\rho_i\in V_{p_i}/(I\cap V_{p_i})$. 
Since $\sigma\in Z^+(KQ/I)$ for any $a$
we have $$a\rho_1+a\rho_2+\dots+a\rho_n=\rho_1a+\rho_2a+\dots+\rho_n a.$$
Since the sum is direct, we have that $a\rho_k=\rho_k a$ for all $1\le k\le n$.
Hence the center has to be a
permutation homogeneous ideal.\end{proof}

\begin{lemma}\label{cycles}
Let $I$ be a (anti-)commutativity ideal.
The positively graded part of center of $KQ/I$ is spanned by linear combinations of 
cycles. 
\end{lemma}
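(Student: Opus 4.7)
The plan is to exploit the complete system of orthogonal idempotents furnished by the vertices of $Q$. Since $I$ is a two-sided ideal, the decomposition
$$KQ = \bigoplus_{x,y \in Q_0} e_x (KQ) e_y$$
descends to the quotient $KQ/I$, where $e_x$ denotes the trivial path at $x$. So every $\sigma \in KQ/I$ admits a unique expression $\sigma = \sum_{x,y} \sigma_{x,y}$, with $\sigma_{x,y} := e_x \sigma e_y$ a linear combination of (classes of) paths from $x$ to $y$.

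Next, for $\sigma \in Z^+(KQ/I)$ and any vertex $x \in Q_0$, I would commute $\sigma$ with the idempotent $e_x$ to obtain $e_x \sigma = \sigma e_x$. Reading off the $(x,y)$-component on both sides via the decomposition above yields $\sigma_{x,y}$ on the left, while on the right only the $y = x$ term survives. Hence $\sigma_{x,y} = 0$ whenever $x \neq y$, so $\sigma = \sum_{x} \sigma_{x,x}$, and each summand $\sigma_{x,x}$ is a linear combination of classes of paths with $\ori = \tar = x$, that is, a linear combination of cycles based at $x$.

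The only delicate point is verifying that the direct-sum decomposition of $KQ$ by source and target actually descends to $KQ/I$, but this is immediate since $I$ is stable under left and right multiplication by each $e_x$, yielding $I = \bigoplus_{x,y} e_x I e_y$. I note in passing that the argument nowhere uses that $I$ is an (anti-)commutativity ideal, so the lemma in fact holds for any two-sided ideal in a path algebra; this is natural, since the centers of path-type algebras are always spanned by combinations of cycles. For this reason, neither Lemma \ref{sums} nor the permutation-homogeneous decomposition is strictly needed here, although one can alternatively carry out the same projection argument inside each $V_p / (I \cap V_p)$ to reach the same conclusion.
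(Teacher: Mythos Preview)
Your argument is correct and follows essentially the same approach as the paper: both commute a central element with the vertex idempotents $e_x$ (the paper writes $\ori(p_1)$) to force the non-cycle components to vanish. Your formulation via the full Peirce decomposition $KQ/I=\bigoplus_{x,y}e_x(KQ/I)e_y$ is cleaner than the paper's path-by-path version, and your observation that the result holds for any two-sided ideal (not just (anti-)commutativity ideals) is correct.
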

\begin{proof}
Assume $\sum_i\alpha_ip_i\in Z^+(KQ/I)$ and that all $p_i$ are linearly independent in the vector space $KQ/I$. 
Let $p_1=a_1a_2\dots a_n$. We shall now show that $p_1$ is 
a cycle. 
Assume that for any pair $p_i,p_j$ in 
the sum, we have that $p_i\neq p_j$, hence we have no cancellations
in the sum. 
We have that
$$\ori(p_1)\sum_i\alpha_ip_i=\sum_{\ori(p_i)=\ori(p_1)}\alpha_ip_i\neq 0$$
since, at least, $\ori(p_1)p_1=p_1\neq 0$. 
Hence $$\sum_i\alpha_ip_i\ori(p_1)=\sum_{\tar(p_i)=\ori(p_1)}\alpha_ip_i\neq 0$$ 
and hence $p_1\ori(p_1)$ is non-zero, i.e. $\ori(p_1)=\tar(p_1)$.
\end{proof}

\begin{lemma}\label{mono-center}
 Let $I$ be a square-free (anti-)commutativity ideal, i.e. an \mbox{(anti-)} commutativity ideal that doesn't contain any non-zero 
 monomials
 of the form $a_i^2$. 
 Then the positively graded part of the center of $KQ/I$ is spanned by non-zero monomials $a_1a_2\dots a_n$ of loops at the
 same basepoint such that
 $(a_ia_j)$ is an allowed transposition for any pair $a_i,a_j$ in the monomial.  
\end{lemma}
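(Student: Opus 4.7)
The plan is to successively refine the form of an arbitrary central element by applying the preceding lemmas in sequence. First I would invoke Lemma \ref{sums} to assume $\sigma \in Z^+(KQ/I)$ is permutation homogeneous, so $\sigma = \sum_i \alpha_i p_i$ where each $p_i$ is a permutation of some fixed multiset $M$ of arrows and the $p_i$ are linearly independent in $KQ/I$. Lemma \ref{cycles} then forces each such $p_i$ to be a cycle in $KQ$. The task reduces to proving: (a) every arrow in $M$ is a loop; (b) all these loops share a common basepoint; (c) every pair $(a_i, a_j)$ of arrows in $M$ satisfies the allowed-transposition condition.

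For (a), I would argue by contradiction. Suppose $c \in M$ satisfies $\ori(c) = u \neq v = \tar(c)$, and consider the centrality equation $c\sigma = \sigma c$ in $KQ/I$. The left side is a sum of paths $c p_i$ beginning with $c$, with $p_i$ based at $v$; the right side is a sum of paths $p_j c$ ending with $c$, with $p_j$ based at $u$. The key point is that because $c$ is not a loop, no allowed transposition can reposition it, so the location of $c$ in any equivalence class modulo $I$ is rigid. Tracking the positions of $c$ in each summand $c p_i$ (one at position $1$, others wherever $c$ already appeared in $p_i$) and in each $p_j c$ (one at position $n+1$, others from $p_j$) and comparing coefficients on equivalence classes in $V_{M+[c]}/(I \cap V_{M+[c]})$ forces all $\alpha_i$ to vanish. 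This rigidity analysis is the main technical obstacle.

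For (b), observe that loops at two distinct vertices $x \neq y$ do not compose in $KQ$, so any permutation of $M$ containing loops from different basepoints would give the zero path. Since each $p_i$ is non-zero in $KQ/I$, hence non-zero already in $KQ$, every loop in $M$ must lie at a single vertex.

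For (c), once $M$ consists of loops at one vertex, consider any pair $(a_i, a_j)$ in $M$. If the transposition is not allowed, then either $a_i a_j \in I$ — in which case Lemma \ref{comm-mono} (or Lemma \ref{anti-comm-mono}) shows that any permutation in which $a_i, a_j$ become adjacent in that order lies in $I$, collapsing enough of the basis — or else $a_i a_j \pm a_j a_i \notin I_2$, so the obstruction to commutation, combined with centrality and the square-free hypothesis, forces $\sigma = 0$. When every pair is allowed, repeated application of the allowed transpositions renders all permutations of $M$ equivalent modulo $I$ (up to sign in the anti-commutative case), so $V_M/(I \cap V_M)$ is one-dimensional and $\sigma = \alpha[p_1]$ for a non-zero monomial $p_1$ of loops at a common basepoint with every pair allowed — precisely the form asserted by the lemma.
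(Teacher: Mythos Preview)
Your route differs from the paper's both in order and in the key device. The paper does not first prove that the arrows are loops and then that they commute; it shows directly that any two arrows $a,b$ occurring in the multiset $M$ admit an allowed transposition, and only afterwards observes that pairwise (anti-)commuting arrows with nonzero product must be loops at a common basepoint. The mechanism is a counting argument: with $a$ the first arrow of some $p_i$ and $b$ any other arrow of $M$, choose $p_j$ among the terms so that the number $m$ of copies of $a$ to the left of the first occurrence of $b$ is maximal. The square-free hypothesis (via Corollary~\ref{cor-square}) gives $ap_j\neq 0$, so $ag=ga$ forces $ap_j=p_ka$ in $KQ/I$ for some $k$. Now $ap_j$ has $m+1$ copies of $a$ before its first $b$, whereas $p_k$ has at most $m$; if $(ab)$ is not allowed, no sequence of allowed transpositions can carry the trailing $a$ in $p_ka$ past a $b$, so $p_ka$ also has at most $m$ --- a contradiction.

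Your sketch has genuine gaps exactly where this device is needed. Part~(c) is the heart of the lemma, and ``the obstruction to commutation, combined with centrality and the square-free hypothesis, forces $\sigma=0$'' is the claim to be proved, not an argument; likewise ``collapsing enough of the basis'' does not settle the case $a_ia_j\in I$, since many permutations of $M$ can avoid that particular adjacency and remain nonzero and inequivalent. Part~(a) also fails as written: when $M$ already contains copies of the non-loop $c$, the multiset of $c$-positions in a surviving term of $c\sigma$ can coincide with that in a surviving term of $\sigma c$. For instance, with $c:u\to v$, $d:v\to u$, $I=0$ and $\sigma=\alpha\,cd+\beta\,dc$, one finds $c\sigma=\beta\,cdc$ and $\sigma c=\alpha\,cdc$, so position-tracking yields only $\alpha=\beta$, not vanishing. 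The missing ingredient throughout is the paper's maximal-$m$ trick, which is precisely where the square-free hypothesis is actually used.
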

\begin{proof}
%
%
Let $g = \sum_i \alpha_i p_i\in Z^+(KQ/I)$ be a non-zero permutation homogeneous element, 
where $\alpha_i\in K$ and the $p_i$ are paths. Let
$a$ be any arrow occurring first in some $p_i$ and 
let $b$ be any other arrow. Let $p_j$ be such that 
there is a maximal number $m$ of copies of $a$ to the left of the first occurrence of $b$. 

Since $ag = ga$ and by Corollary \ref{cor-square} we have that $ap_j \neq 0$ 
it must be that $ap_j = p_k a$ for some $p_k$. 

There are $m+1$ copies of $a$ before
the first occurrence of $b$ in $ap_j$, and therefore also in $p_k a$. 
But, by maximality, $p_k$ contains at most $m$ copies of $a$ before the first $b$, 
and if $a$ and $b$ do not commute, the same must be true for $p_k a$. This gives 
a contradiction. 

Hence $a$ and $b$ commute, so that $a$ commutes with all arrows in the paths $p_i$.
A repetition of this argument shows that all arrows commute, and $g$ may be written 
as a single term $\alpha p$. Since all arrows of $p$ commute, it follows 
that they must be loops at the same basepoint, for otherwise the product would be $0$.
\end{proof}

\begin{thm}\label{vsp}
Let $I$ be a square-free commutativity ideal.  
The positively graded part of the center of $KQ/I$ has a basis given by all non-zero products $a_1a_2\dots a_k$, of
loops with the same basepoint, such that
\begin{enumerate}
\item All $a_i$ commute non-trivially modulo $I$: $a_ia_j=a_ja_i\neq 0$ for all $i$ and $j$. 
\item For all arrows $b$ in the quiver, one of the following two options holds: 
\begin{itemize}
\item $b$ commutes with all $a_i$. 
\item There exist $i$ and $j$ such that $a_ib=0=ba_j$.
\end{itemize}
\end{enumerate}
\end{thm}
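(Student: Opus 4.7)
The plan is to start from Lemma \ref{mono-center}, which already tells us that $Z^+(KQ/I)$ is spanned by monomials $\sigma = a_1a_2\cdots a_k$ in which the $a_i$ are loops sharing a common basepoint and every $(a_i,a_j)$ is an allowed transposition. Unpacking the minimal-generating-set convention, an allowed transposition $(a_ia_j)$ forces $a_ia_j-a_ja_i \in I_2$, which in turn forces $a_ia_j\neq 0$ and $a_ja_i\neq 0$; so condition (1) of the theorem is automatic for the spanning monomials provided by Lemma \ref{mono-center}. It remains to characterize which of these monomials genuinely lie in the center, and to verify linear independence.

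Next, I fix such a $\sigma$ satisfying (1) and test centrality $b\sigma = \sigma b$ against each arrow $b$. These are both monomials on the multiset $\{b, a_1,\ldots,a_k\}$, so in $KQ/I$ equality holds if and only if either (a) the two are equivalent under $\sim$, or (b) both lie in $I$. For (a), I intend to use a simple position-tracking argument: in $ba_1\cdots a_k$ the arrow $b$ stands to the left of every $a_j$, while in $a_1\cdots a_k b$ it stands to the right, so any chain of allowed transpositions turning one into the other must swap $b$ past each $a_j$ at least once, which is an allowed transposition only if $ba_j - a_jb\in I_2$. Hence (a) is equivalent to $b$ commuting with every $a_i$, which is the first option in condition (2).

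For (b), I apply Lemma \ref{comm-mono}: $b\sigma\in I$ iff some monomial equivalent to $ba_1\cdots a_k$ has an adjacent pair lying in $I$. By condition (1) no pair $a_ia_j$ is in $I$, so the offending pair must involve $b$. Using that the $a_i$'s commute freely among themselves, any $a_j$ can be placed immediately to the right of $b$; and an adjacent pair $a_ib$ can occur only when $b$ commutes with $a_i$, in which case minimality of the generating set prevents $a_ib\in I$. Thus $b\sigma\in I$ iff there exists $j$ with $ba_j\in I$, and dually $\sigma b\in I$ iff there exists $i$ with $a_ib\in I$; this is precisely the second option of condition (2). Combining with the previous paragraph, $\sigma$ is central if and only if conditions (1) and (2) both hold.

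Finally, for the basis claim I will invoke Lemma \ref{vsp-permhomo}: the monomials identified above live in pairwise distinct permutation-homogeneous components $V_p$, one component per multiset of arrows. Within a single $V_p$ all orderings of the multiset are related by allowed transpositions thanks to condition (1), so the image of such a $V_p$ in $KQ/I$ is one-dimensional, spanned by any representative ordering $a_1\cdots a_k$. Hence the collection of all such non-zero products (indexed by multisets satisfying (1)--(2)) is linearly independent and spans $Z^+(KQ/I)$. The main obstacle is the argument of the previous paragraph -- the careful use of Lemma \ref{comm-mono} together with the interplay between allowed transpositions and the minimal generating set to conclude that a monomial involving $b$ lies in $I$ precisely when a length-two subproduct $ba_j$ (or $a_ib$) already does.
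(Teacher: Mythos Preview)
Your proof is correct and tracks the paper's own argument closely: both invoke Lemma \ref{mono-center} for condition (1), reduce condition (2) to the dichotomy ``$b\sigma\sim\sigma b$'' versus ``$b\sigma,\sigma b\in I$'' and analyze the latter via Lemma \ref{comm-mono}, and obtain the basis claim from the $V_p$-decomposition. The only difference is presentational: the paper splits into four cases according to whether $\ori(b)$ and $\tar(b)$ equal the basepoint (your uniform treatment tacitly assumes both do, so that $b\sigma$ and $\sigma b$ are honest paths in $KQ$---the remaining three cases are immediate), while conversely you spell out via position-tracking the implication ``$b\sigma\sim\sigma b \Rightarrow b$ commutes with every $a_i$'' that the paper simply asserts.
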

\begin{proof}
 We begin with proving that all such elements lie in the center. Let $p=a_1a_2\dots a_n$
 be a monomial of the type described in the theorem and let $b$ be an arrow such that $b$
 does not commute with all $a_l$. 
 Since all the arrows in $p$ commute $a_1a_2\dots a_n\sim 
 a_ja_1a_2\dots a_{j-1}a_{j+1}\dots a_n$. Hence $ba_j=0$ implies $bp=0$.
 In the same way $a_ib=0$ implies $pb=0$. 
 
 By Lemma \ref{mono-center} we have that the center is spanned by monomials of loops where 
 all arrows commute non-trivially. We now show that these monomials also have property $2$ above.

 Let $p=a_1a_2\dots a_n\in Z^+(KQ/I)$ be a monomial such that $a_ia_j=a_ja_i\neq 0$ for all $1\le i,j\le n$. 
 Assume $b\in Q_1\setminus\{a_1, a_2, \dots, a_n\}$ and $bp=pb$. 
We have four cases:
\begin{enumerate}
 \item Assume $\mathfrak{o}(b)=\mathfrak{t}(b)=\mathfrak{o}(p)$. Then either  
$bp\sim pb$, which implies that $b$ commutes with all $a_i$ in the monomial, or $bp\in I$ and $pb\in I$.
Since $I$ is a quadratic ideal, $b\notin I$ and by assumption $p\notin I$. 
Hence if $bp=0$ by Lemma \ref{comm-mono} we get that there exists an $a_i$ in the monomial such that 
$ba_i=0$. In 
the same way we get that there exists an $a_j$ in the monomial such that $a_jb=0$. 
\item Assume $\mathfrak{o}(b)= \mathfrak{o}(p)$ and $\mathfrak{t}(b)\neq \mathfrak{o}(p)$. Then
$bp=0$ trivially and hence $ba_i=0$ for all $a_i$ in the path. As above there exists an $a_j$ in the monomial 
such that $a_jb=0$.
\item Assume $\mathfrak{o}(b)\neq \mathfrak{o}(p)$ and $\mathfrak{t}(b)= \mathfrak{o}(p)$. Analogous to 
case 2 we get that there exists an $a_i$ in the monomial such that $ba_i=0$ and $a_jb=0$ for all $a_j$
in the monomial.
\item Assume $\mathfrak{o}(b)\neq \mathfrak{o}(p)$ and $\mathfrak{t}(b)\neq \mathfrak{o}(p)$. Then
$ba_i=a_ib=0$ for all $a_i$ in the monomial.
\end{enumerate}
That the set of monomials fulfilling these conditions is a basis is seen by realizing that 
if $p,q\in V_p$ and $p,q\in Z^+(KQ/I)$, then $p=q$ and hence 
there is at most one monomial from each permutation homogeneous subspace $V_p$ in $Z^+(KQ/I)$
and since $KQ/I\cong K\oplus \bigoplus_p V_p/(I\cap V_p)$ we get that these form a basis for $Z^+(KQ/I)$.
\end{proof}

To prove the anti-commutative version of Theorem \ref{vsp} we need the following lemma. 
\begin{lemma}\label{anti-comm-path}
 Let $I$ be a square-free anti-commutativity ideal. Assume that $a_1a_2\dots a_n\neq 0$. 
 If $$a_ia_1a_2\dots a_n=a_1a_2\dots a_na_i$$
 for every $a_i$ in the monomial, then if $n$ is even, there are an even number of copies 
 of every $a_i$ in the monomial and if $n$ is odd, there are an odd number of copies
 of every $a_i$ in the monomial. 
 \end{lemma}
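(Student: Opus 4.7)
The strategy is to use the anti-commutativity relations to bring $p$ into a canonical form in which the distinct arrows are grouped together, and then to track signs.

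First I would argue that, in the given setting, all distinct arrows of $p$ must pairwise anti-commute modulo $I$. This can be established by an argument analogous to the one in the proof of Lemma \ref{mono-center}, exploiting that $p\ne 0$ together with the centralising conditions $a_i p = p a_i$. Pairwise anti-commutativity allows the arrows of $p$ to be freely permuted (at the cost of one sign flip per swap), so in $KQ/I$ we may write
\[
p = \epsilon\, b_1^{k_1} b_2^{k_2} \cdots b_s^{k_s},
\]
where $b_1,\dots,b_s$ are the distinct arrows appearing in $p$ (in some fixed ordering), $k_j\ge 1$ are their multiplicities with $k_1+\cdots+k_s=n$, and $\epsilon\in\{\pm 1\}$ is the sign produced by the rearrangement.

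Next, for each $j$, I would compute $b_j p$ and $p b_j$ in this canonical form. To merge the extra $b_j$ at the front of $b_j p$ into the $b_j^{k_j}$-block, it must pass $k_1+\cdots+k_{j-1}$ other arrows, contributing the sign $(-1)^{k_1+\cdots+k_{j-1}}$; analogously the extra $b_j$ at the end of $p b_j$ must pass $k_{j+1}+\cdots+k_s$ arrows, contributing $(-1)^{k_{j+1}+\cdots+k_s}$. Since the resulting canonical monomial $b_1^{k_1}\cdots b_j^{k_j+1}\cdots b_s^{k_s}$ is non-zero (again by the pairwise anti-commutativity and $p\ne 0$), and since we may assume $\mathrm{char}\,K\ne 2$ (every anti-commutativity ideal being a commutativity ideal in characteristic $2$), the hypothesis $b_j p = p b_j$ forces the two signs to coincide:
\[
k_1+\cdots+k_{j-1} \equiv k_{j+1}+\cdots+k_s \pmod{2}.
\]
Adding $k_j$ to both sides gives $n-k_j \equiv 0 \pmod 2$, i.e.\ $k_j\equiv n\pmod 2$ for every $j$, which is precisely the claimed parity statement.

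The main obstacle is the first step: justifying pairwise anti-commutativity of the distinct arrows of $p$, which is what permits bringing $p$ into its canonical form and guarantees that the monomials compared above are not trivially zero. Once that is secured, the remainder of the argument is a routine sign bookkeeping.
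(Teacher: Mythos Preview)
Your proposal is correct and rests on the same parity-of-transpositions idea as the paper's proof. The paper argues more tersely, invoking Lemma~\ref{anti-comm-sign} directly to conclude that $bp=pb$ forces an even number of transpositions and then observing that moving $b$ across $p$ costs $n-k_j$ of them; your detour through a canonical form and explicit sign comparison is equivalent bookkeeping, and your insistence on first securing pairwise anti-commutativity (which the paper leaves implicit, relying on the surrounding context of Lemma~\ref{mono-center}) is a fair point of care.
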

\begin{proof}
If $ba_1a_2\dots a_n=a_1a_2\dots a_nb$, then by Lemma \ref{anti-comm-sign} we need an even number
of transpositions to obtain $a_1a_2\dots a_nb$ from $ba_1a_2\dots a_n$. 
If $n$ is even we need an even number of
copies (or no copy) of $b$ in the monomial $a_1a_2\dots a_n$, since an even number of copies
gives an even number of transpositions. If $n$ is odd the number of copies of $b$
must be odd in order to get an even number of transpositions. 
\end{proof}

\begin{thm}\label{vspsemi}
Let $I$ be a square-free anti-commutativity ideal. 
The positively graded part of the center of $KQ/I$ has a basis given by all non-zero products $a_1a_2\dots a_k$,
of loops with the same basepoint, such that
\begin{enumerate}
\item If $k$ is even:
\begin{itemize}
 \item The monomial contains an even number of each arrow, $a_i$.
 \item For all arrows $a_i, a_j$ in the monomial $a_ia_j=-a_ja_i\neq 0$. 
\item For all arrows $b$ in the quiver, one of the following two options holds: 
\begin{itemize}
\item $b$ anti-commutes non-trivially with all $a_i$, i.e. $ba_i=-a_ib\neq 0$. 
\item There exist $i$ and $j$ such that $a_ib=0=ba_j$.
\end{itemize}\end{itemize}
\item If $k$ is odd:
\begin{itemize}
\item The monomial contains an odd number of each arrow, $a_i$.
\item For all arrows $a_i,a_j$ in the monomial $a_ia_j=-a_ja_i\neq 0$.
\item For all other arrows $b$, there exist $i$ and $j$ such that $a_ib=0=ba_j$.
\end{itemize}
\end{enumerate}
\end{thm}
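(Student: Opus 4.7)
The plan is to mirror the proof of Theorem~\ref{vsp}, substituting Lemmas~\ref{anti-comm-mono}, \ref{anti-comm-sign}, and \ref{anti-comm-path} for their commutative counterparts and keeping careful track of the signs introduced when an arrow is slid past a monomial. By Lemma~\ref{mono-center} the positively graded center is already known to be spanned by non-zero monomials $p = a_1 \cdots a_n$ of loops at a common basepoint with $a_ia_j = -a_ja_i \ne 0$ for every pair in $p$, so the task is to pin down which such monomials are actually central and, conversely, to verify that the monomials described in the statement do lie in the center.

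For the sufficiency direction I would fix a non-zero $p = a_1 \cdots a_k$ meeting the stated conditions and check $bp = pb$ for an arbitrary arrow $b$. When $b$ is one of the $a_i$, the parity hypothesis on the multiplicity of each arrow is precisely what Lemma~\ref{anti-comm-path} requires in order to guarantee $bp = pb$. When $b$ is an outside arrow, the even case splits into two subcases: either $b$ anti-commutes non-trivially with every $a_i$, in which case sliding $b$ through $p$ picks up a factor $(-1)^k = 1$ and yields $bp = pb$, or there exist $i, j$ with $a_ib = 0 = ba_j$, in which case the pairwise anti-commutation of the $a_i$ allows $a_j$ to be moved to the front of $p$ and $a_i$ to the back, forcing both $bp$ and $pb$ to vanish. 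The odd case is handled by the second alternative alone: the first would give $bp = -pb$, which is compatible with centrality only through simultaneous vanishing, which in turn requires the zero relations.

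For the converse I would start with a central monomial $p = a_1 \cdots a_n$ produced by Lemma~\ref{mono-center} and apply Lemma~\ref{anti-comm-path} to the identities $a_ip = pa_i$ to extract the parity dichotomy on arrow multiplicities. For an outside arrow $b$ with $bp = pb$ I would reuse the four-case analysis from Theorem~\ref{vsp} based on how $\ori(b)$ and $\tar(b)$ relate to the basepoint of $p$. In the three cases where $b$ is not a loop at that basepoint, one of $bp, pb$ is automatically zero and Lemma~\ref{anti-comm-mono} forces the other to vanish by producing a zero pair $a_ib$ or $ba_j$. In the remaining case, where $b$ is a loop at the basepoint and $bp = pb$ is non-zero, the two monomials lie in the same permutation-homogeneous subspace and Lemma~\ref{anti-comm-sign} demands an even number of allowed transpositions to identify them; for $b$ anti-commuting with every $a_i$ this means $k$ is even, and when $k$ is odd this option is excluded so the zero-relation alternative must hold. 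The basis claim then follows as in Theorem~\ref{vsp} from Lemma~\ref{vsp-permhomo}, since each permutation-homogeneous subspace contributes at most one monomial, up to scalar, to the center.

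The main obstacle will be the sign bookkeeping, and in particular the verification in the odd case that non-trivial anti-commutation of an outside loop $b$ with every $a_i$ really does produce a non-zero $bp$, so that the resulting identity $bp = -pb$ is a genuine obstruction rather than a vacuous $0 = 0$. This rests on the square-free hypothesis together with Lemma~\ref{anti-comm-mono}: if neither any $a_ia_j$ nor any $ba_i$ lies in $I$, then no consecutive pair in $b a_1 \cdots a_n$ does, so the whole monomial is non-zero.
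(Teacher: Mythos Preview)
Your proposal is correct and follows essentially the same route as the paper: both start from Lemma~\ref{mono-center}, invoke Lemma~\ref{anti-comm-path} for the parity constraints on arrow multiplicities, and then run the four-case analysis of Theorem~\ref{vsp} on outside arrows, with Lemma~\ref{anti-comm-sign} controlling the signs. One small remark: you cite Lemma~\ref{anti-comm-path} for the sufficiency direction (parity $\Rightarrow$ $a_ip=pa_i$), but that lemma is stated only as the converse; the direction you need is the trivial one (an explicit transposition count of $k-m_b$, with $m_b$ the multiplicity of $b$), so this is not a gap. Your explicit check that $bp\neq 0$ in the odd case---so that $bp=-pb$ is a genuine obstruction rather than $0=0$---is a point the paper's proof leaves implicit, and your justification via Lemma~\ref{anti-comm-mono} and the square-free hypothesis is exactly right.
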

\begin{proof}
Assume that $p=a_1a_2\dots a_{2n}$ fulfills the conditions listed in item 1 above.
Then $bp=pb$ for all $b\in Q_1$ and hence it follows that $p\in Z^+(KQ/I)$.

By Lemma \ref{mono-center}, $Z^+(KQ/I)$ is spanned by non-zero monomials $a_1a_2\dots a_k$ of 
loops at the same base point such that all $(a_ia_j)$ are allowed transpositions. Consider
$p=a_1a_2\dots a_{2n}\in Z^+(KQ/I)$ of even length. Then $a_ia_1a_2\dots a_{2n}=a_1a_2\dots a_{2n}a_i$
for all $a_i\in \{a_1, a_2, \dots a_{2n}\}$ and by Lemma \ref{anti-comm-path} we have that 
$p$ contains an even number of copies of
every $a_i$.
Assume $b\notin \{a_1, a_2, \dots a_{2n}\}$. Since $p\in Z^+(KQ/I)$ we have that $bp=pb$.
Assume $bp\neq 0$, then $$ba_1a_2\dots a_{2n}=a_1a_2\dots a_{2n}b$$ and 
hence $(a_ib)$ has to be an allowed transposition for any $a_i$ in the monomial $p$ and
since the length of the monomial $p$ is even $bp=pb$. 
Assume $bp=0$, then, by analogous arguments as in the proof of Theorem \ref{vsp} we
get that there exist $a_i$ and $a_j$ such that $a_ib=ba_j=0$.

Now consider $p=a_1a_2\dots a_{n}\in Z^+(KQ/I)$ of odd length. 
By Lemma \ref{anti-comm-path} we have that $p$ contains an odd number of copies of
every $a_i$. Assume $b\notin \{a_1, a_2, \dots, a_n\}$. Then 
$bp=pb$ implies that $bp=pb=0$, since the number of transpositions needed to
rewrite $ba_1a_2\dots a_n$ to $a_1a_2\dots a_nb$ is odd. 
By analogous arguments as in the proof of Theorem \ref{vsp} we
get that there exist $a_i$ and $a_j$ such that $a_ib=ba_j=0$.

Let $p=a_1a_2\dots a_{n}\in Z^+(KQ/I)$ be a monomial of odd length such that it fulfills condition $2$ above.
Then for any $a\in KQ/I$ we get that $ap=pa$, i.e. $p\in Z(KQ/I)$.  

That the specified monomials consistute a basis follows from the fact that 
there is at most one monomial from each permutation homogeneous subspace $V_p$ in $Z^+(KQ/I)$
and since $KQ/I\cong K\oplus \bigoplus_p V_p/(I\cap V_p)$ we get that these form a basis for $Z^+(KQ/I)$.
\end{proof}

\begin{exmp}\label{cen-ex}
 Let $Q$ be the quiver in Example \ref{ex1}(i) and let $I=\langle ab+ba, bc\rangle$. Then $Z(KQ/I)$ has a basis
 consisting of $b^k$ for all $k$ and $a^{2i}b^{2j}$ for $i\ge0$ and $i\ge1$. 
\end{exmp}

In section \ref{the_graded_center} we need to have control over the nilpotent elements of $Z(KQ/I)$. The 
following proposition makes this easy.

\begin{prop}\label{nilpotent}
 Let $I$ be a square-free commutativity ideal. 
 Then $Z(KQ/I)$ does not contain
 any nilpotent elements.
\end{prop}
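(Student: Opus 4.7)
The plan is to combine the explicit basis of $Z^+(KQ/I)$ from Theorem \ref{vsp} with a lexicographic extremal argument. Suppose $z\in Z(KQ/I)$ is nilpotent. Since $Z^0(KQ/I)=K$ has no nilpotents, we have $z\in Z^+(KQ/I)$. Writing $z=\sum_{x\in Q_0}z_x$ according to the basepoints of the basis monomials, the identity $z_x=e_xz_xe_x$ gives $z_xz_y=0$ whenever $x\neq y$, so $z^n=\sum_xz_x^n$ and each $z_x$ is itself nilpotent. It therefore suffices to prove $z_x=0$ for each fixed $x$.

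Expand $z_x=\sum_{i=1}^Nc_im_i$ with $c_i\neq 0$ in the basis of Theorem \ref{vsp}. Enumerate the loops at $x$ occurring in any $m_i$ as $a_1,\dots,a_r$, and assign to $m_i=a_1^{\mu_{i,1}}\cdots a_r^{\mu_{i,r}}$ its multi-index $\mu_i\in\mathbb{Z}_{\geq 0}^r$. Distinct basis monomials lie in distinct permutation-homogeneous subspaces $V_p$, so they correspond to distinct multi-indices. Order $\mathbb{Z}_{\geq 0}^r$ lexicographically and choose $i^*$ with $\mu^*:=\mu_{i^*}$ maximal. In the expansion
$$z_x^n=\sum_{(i_1,\dots,i_n)}c_{i_1}\cdots c_{i_n}\,m_{i_1}m_{i_2}\cdots m_{i_n},$$
each nonzero product $m_{i_1}\cdots m_{i_n}$ is central, and hence, by Theorem \ref{vsp}, is the unique basis element of $Z^+$ whose multi-index equals $\mu_{i_1}+\cdots+\mu_{i_n}$. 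Strict additivity of the lex order forces the multi-index $n\mu^*$ to be attained only by the constant tuple $(i^*,\dots,i^*)$, so the coefficient of the basis element $m_{i^*}^n$ in $z_x^n$ is precisely $c_{i^*}^n\neq 0$.

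To finish, observe that $m_{i^*}^n$ is itself nonzero in $KQ/I$: its arrows commute non-trivially pairwise by condition 1 of Theorem \ref{vsp}, and the square-free hypothesis ensures $a_j^2\notin I$ as well, so no quadratic sub-product of $m_{i^*}^n$ lies in $I$; Lemma \ref{comm-mono} and Corollary \ref{cor-square} then forbid $m_{i^*}^n\in I$. Consequently $z_x^n\neq 0$, contradicting nilpotency. The principal subtlety lies in arguing that distinct tuples contribute to distinct basis elements in $z_x^n$ and in the non-vanishing of the extremal power; both rest squarely on the square-free hypothesis together with the basis description provided by Theorem \ref{vsp}.
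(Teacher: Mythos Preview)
Your proof is correct and follows essentially the same strategy as the paper's: both arguments use the monomial basis of Theorem~\ref{vsp}, pick the lexicographically extremal basis monomial, observe that its $n$th power is nonzero by the square-free hypothesis, and note that no other product of basis monomials can land in the same permutation-homogeneous component $V_{p_1^m}$, so no cancellation is possible. Your version adds some preliminary housekeeping (the reduction to $Z^+$ and the decomposition by basepoint via the idempotents $e_x$) and makes the multi-index bookkeeping explicit, but the underlying idea is identical to the paper's leading-term argument.
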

\begin{proof}

 Assume $\sum_{i=1}^n \alpha_ip_i\in Z(KQ/I)$, where $\alpha_i\in K$, each $p_i$ is a monomial
 of loops at the same basepoint and one of the $p_i$s can be of degree $0$. 
 By Theorem \ref{vsp} we have that $p_i\in Z(KQ/I)$ for all $1\le i\le n$.
 Assume $p_i\neq p_j$ for all $i\neq j$, i.e. 
 for any permutation homogeneous subspace $V_p$ there is at most one $p_i$ in the sum such that $p_i\in V_p$.
 By Theorem \ref{vsp} we have that $p^m=a_1^ma_2^m\dots a_n^m$ and since $I$ is square-free 
 we have that $p_i^m\neq 0$ for all $1\le i\le n$ and all $m\ge 1$.
 
 Order the monomials in $KQ/I$ according to the lexicographical order. 
 Let $p_1$ be the leading term in $\sum_{i=1}^n \alpha_ip_i\in Z(KQ/I)$. Since $p_1^m\neq 0$
 then $p_1^m$ will be the leading term in $\Big(\sum_{i=1}^n \alpha_ip_i\Big)^m$. 
 We have that $p_1^m\in V_{p_1^m}$, and since $p_1^m>_\mrm{lex}p_{i_1}p_{i_2}\dots p_{i_m}$ for all other terms in
 $\Big(\sum_{i=1}^n \alpha_ip_i\Big)^m$, we see that $p_1^m$ is the only path that lies in $V_{p_1^m}$. 
 This means that no other term in the product can cancel $p_1^m$ and hence
 $$\Big(\sum_{i=1}^n \alpha_ip_i\Big)^m=\alpha_1^mp_1^m+\textrm{other terms }\neq 0\qedhere$$
\end{proof}

By an analogous proof we get the same result when $I$ is a square-free anti-commutativity ideal.

\begin{prop}\label{nilpotent-anti}
 Let $I$ be a square-free anti-commutativity ideal. 
 Then $Z(KQ/I)$ does not contain
 any nilpotent elements.
\end{prop}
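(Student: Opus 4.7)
The plan is to mirror the proof of Proposition \ref{nilpotent}, substituting Theorem \ref{vspsemi} for Theorem \ref{vsp}, and paying attention to the signs coming from anti-commutation. By Theorem \ref{vspsemi} every element of $Z^+(KQ/I)$ can be written as a linear combination $g = \sum_{i=1}^n \alpha_i p_i$ of the distinguished basis monomials, each of which is a product of pairwise anti-commuting loops at a common basepoint. Since the $p_i$ lie in the centre, they commute among themselves as elements of $KQ/I$, so powers $g^m$ can be expanded unambiguously as $\sum \alpha_{i_1}\cdots\alpha_{i_m}\,p_{i_1}\cdots p_{i_m}$.

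The first step is to verify that every basis centre element $p$ satisfies $p^m \neq 0$ for all $m \geq 1$. For this, group the arrows of $p$ by distinct loops and write $p = \pm b_1^{n_1}b_2^{n_2}\cdots b_r^{n_r}$ using the anti-commutativity relations (this reordering is valid because Theorem \ref{vspsemi} ensures each pair $b_j,b_l$ anti-commutes non-trivially modulo $I$, so $b_jb_l \neq 0$ in $KQ/I$). Then $p^m = \pm b_1^{mn_1}\cdots b_r^{mn_r}$, and square-freeness of $I$ together with Lemma \ref{anti-comm-mono} implies that this ordered monomial is not in $I$: no two adjacent letters form a monomial generator of $I$, since $b_j^2 \notin I$ for any $j$ and $b_jb_l \notin I$ for distinct $j,l$ appearing in $p$.

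The second step is the leading-term argument. Order monomials lexicographically, and let $p_1$ be the lex-largest basis element occurring in $g$. As in the commutative case, $\pm p_1^m$ is then the lex-leading monomial of $g^m$: every other summand $p_{i_1}\cdots p_{i_m}$ is strictly smaller in lex. One argues exactly as in Proposition \ref{nilpotent} that $p_1^m$ is the only monomial appearing in the expansion of $g^m$ that lies in the permutation-homogeneous subspace $V_{p_1^m}$, so that the $V_{p_1^m}$-component of $g^m$ equals $\pm\alpha_1^m p_1^m$. By the first step this is non-zero, hence $g^m \neq 0$.

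The only point that requires genuine attention, as compared with the commutative proof, is the sign bookkeeping when reordering $p_i^m$ and when comparing the various products $p_{i_1}\cdots p_{i_m}$ in the expansion. Since the signs introduced are always $\pm 1$, they can never turn a non-zero monomial into zero, so the argument goes through; one just has to write ``$\pm$'' consistently. This is the place where a careless calculation could slip, but it is not a real conceptual obstacle.
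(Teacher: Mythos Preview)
Your proposal is correct and follows essentially the same route as the paper, which simply states that the proof is analogous to that of Proposition~\ref{nilpotent}; you have spelled out that analogy faithfully, replacing Theorem~\ref{vsp} by Theorem~\ref{vspsemi} and tracking the $\pm$ signs coming from anti-commutation. One small remark: when you argue that $b_1^{mn_1}\cdots b_r^{mn_r}\notin I$, checking only the adjacent pairs in this particular ordering is not literally what Lemma~\ref{anti-comm-mono} requires, but since every pair $(b_jb_l)$ is an allowed transposition, every equivalent monomial has the same property, so the conclusion stands.
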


\section{Finite generation of the center}

Since we are interested in finite generation of the Hochschild cohomology ring, we are also interested in
when the center is finitely generated as a $K$-algebra. We'll see a nice decompostiton of the center and 
useful a necessary condition for finite generation. By introducing an enhanced generator graph we'll get a 
combinatorial tool to determine when the center is finitely generated. 


\begin{lemma}\label{sq-cen}
 Let $I$ be a square-free commutativity ideal and $a\in Q_1$. 
 If $a^k\in Z(KQ/I)$, then $a\in Z(KQ/I)$.  
\end{lemma}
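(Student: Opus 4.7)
The plan is to appeal directly to Theorem \ref{vsp}, exploiting the fact that its two basis-conditions depend only on the \emph{set} of arrows appearing in a monomial, not on their multiplicities. This means the conditions witnessing $a^k \in Z^+(KQ/I)$ are literally identical to the conditions required for $a \in Z^+(KQ/I)$, so the lemma should fall out as an immediate corollary.

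The case $k=1$ is trivial, so I would suppose $k \geq 2$. The first step is to argue that $a^2 \neq 0$ in $KQ/I$: otherwise $a^2 \in I$, whence $a^k = a^{k-2} \cdot a^2 \in I$, contradicting the fact that $a^k$ is a nonzero element of $Z(KQ/I)$. In particular $a$ must be a loop, since $a^2 \neq 0$ forces $\ori(a) = \tar(a)$. The second step is to apply Theorem \ref{vsp} to $a^k$, viewed as the length-$k$ product $a \cdot a \cdots a$. Its condition (1) reduces to $a \cdot a \neq 0$, which was just verified. Its condition (2), with all factors $a_i$ equal to $a$, says that for every arrow $b$, either $b$ commutes with $a$ modulo $I$, or $ab = 0 = ba$ (the existential quantifiers on $i$ and $j$ collapse since there is only one arrow in play).

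These are exactly the two conditions that Theorem \ref{vsp} imposes on the one-arrow ``product'' $a$ for membership in the basis of $Z^+(KQ/I)$, and so $a \in Z(KQ/I)$. No step here presents a genuine obstacle; the main point is simply to recognize that Theorem \ref{vsp} has the right shape for this stability property, since its hypotheses are insensitive to the multiplicities of the arrows in the monomial and square-freeness of $I$ guarantees that $a^2 \in I$ cannot hold without trivializing all higher powers.
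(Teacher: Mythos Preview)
Your proof is correct and takes essentially the same approach as the paper: both apply Theorem \ref{vsp} to $a^k$ and observe that its two conditions depend only on the single arrow $a$, so they transfer verbatim to the one-arrow monomial and give $a\in Z(KQ/I)$. The paper's version is a one-line appeal to Theorem \ref{vsp}; your extra care in checking that $a$ is a loop and that $a^2\neq 0$ is fine, though note that both arguments tacitly assume $a^k\neq 0$ (you invoke this when calling $a^k$ ``a nonzero element''), which is needed to place $a^k$ among the basis monomials of Theorem \ref{vsp} and is harmless in the contexts where the lemma is used.
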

\begin{proof}
 Assume $a^k\in Z(KQ/I)$. Theorem \ref{vsp} gives that for any other arrow $b\in Q_1$ either $ab- ba\in I$
 or $ab=ba=0$, i.e. $a\in Z(KQ/I)$. 
\end{proof}

\begin{prop}\label{deg1}
Let $I$ be a square-free commutativity ideal. 
If the center of $KQ/I$ is non-trivial and finitely generated as an $K$-algebra, 
then $Z^+(KQ/I)$ is generated in degree 1. Conversely, if $Z^+(KQ/I)$ is generated in degree $1$
we have that
$Z(KQ/I)$ is finitely generated and non-trivial.
\end{prop}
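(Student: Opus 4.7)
The plan hinges on the explicit monomial basis of $Z^+(KQ/I)$ provided by Theorem \ref{vsp}. Say that $X \subseteq Q_1$ is a \emph{central clique} if its members are loops at a common vertex, any two of them commute non-trivially (with every square non-zero), and every arrow $b \in Q_1$ either commutes with every element of $X$ or is annihilated on both sides by (possibly different) members of $X$. Theorem \ref{vsp} then says that the basis of $Z^+$ consists of all monomials $\prod_{a \in X} a^{e_a}$ (with $e_a \ge 1$) whose support $X$ is a central clique; since the defining conditions depend only on $X$, the basis elements of a given support range over all tuples $(e_a) \in \mathbb{Z}_{>0}^X$. In particular $a \in Z^1$ if and only if $\{a\}$ is itself a central clique.

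The converse direction is essentially formal: if $Z^+$ is generated in degree $1$ by a non-empty set of central elements, then $Z$ is generated as a $K$-algebra by the finite-dimensional subspace $Z^1 \subseteq \operatorname{span}(Q_1)$, hence is finitely generated, and $Z$ is non-trivial because $Z^1 \neq 0$.

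For the forward direction I would argue the contrapositive. Assume $Z^+$ is not generated in degree $1$: then some central clique $X$ contains an arrow $a_0$ with $\{a_0\}$ not itself a central clique. Form the infinite family
\[
p_m = a_0^{\,m} \prod_{a \in X \setminus \{a_0\}} a, \qquad m \ge 1,
\]
each lying in $Z^+$ as a basis element of support $X$, and each non-zero by Proposition \ref{nilpotent} together with the commuting structure of $X$. Suppose for contradiction that $Z$ is finitely generated; since $Z$ is graded and has a monomial basis, I may take the generators to be basis monomials $m_1, \dots, m_N$ of $Z^+$. Because the product of two basis monomials of $Z^+$ is again a basis monomial or zero (using commutativity within each support, and the fact that incompatibilities between two supports force the product to vanish), expanding any expression of $p_m$ as a $K$-polynomial in the $m_i$ and comparing coefficients on the monomial basis forces at least one tuple $(n_i)$ to satisfy the exponent-vector equation $\sum_i n_i \vec v_i = (m,1,\dots,1)$ on $X$ and to vanish outside $X$.

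The main obstacle is the combinatorial finish. Any used generator must have support contained in $X$ (else it contributes to a coordinate outside $X$). For each $a_j \in X \setminus \{a_0\}$, matching the coordinate $a_j$ against the target value $1$ pins down exactly one used generator $m_i$ whose support contains $a_j$, and forces $n_i = 1$ together with $(\vec v_i)_{a_j} = 1$. Because $\{a_0\}$ is not a central clique, no generator has support $\{a_0\}$, so every used generator contributing to the $a_0$-coordinate must also contain some $a_j \in X \setminus \{a_0\}$, and is therefore already among the finitely many generators pinned down above. Consequently the total $a_0$-contribution is bounded by the fixed constant $\sum_{i\, :\, a_0 \in \operatorname{supp}(m_i),\, \operatorname{supp}(m_i) \subseteq X} (\vec v_i)_{a_0}$, contradicting the arbitrariness of $m$.
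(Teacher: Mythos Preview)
Your argument is correct and is essentially the same as the paper's. Both rest on Theorem~\ref{vsp}, both consider central monomials of the form $a^{m}\cdot(\text{fixed product of the remaining clique arrows})$, and both use support/exponent constraints on a putative finite generating set to force a pure power of $a$ into the center (whence $a\in Z$ by Lemma~\ref{sq-cen}). The paper runs this directly via an extremal choice---pick the generator containing $a$ with the fewest other arrows $k$, and among those the largest $a$-exponent $m$, then factor $a^{m+1}b_1\cdots b_k$---while you take the contrapositive and do the exponent-vector bookkeeping explicitly; the content is the same.
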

\begin{proof}
Assume $Z^+(KQ/I)$ is finitely generated by generators $p_1,\dots,p_n$. 
We may, by Theorem \ref{vsp}, assume each $p_i$ is a product of commuting loops at the same basepoint. 

Let $a$ be any arrow occurring in any one of the $p_i$, and suppose, without loss of generality, that 
$p_1=a^m b_1\dots b_k$ is the generator in which a power of $a$ 
occurs together with a minimal number $k$ of arrows $b_i\neq a$. We may also assume that $m$ is maximal for this
value of $k$.

Since also $a^{m+1}b_1\dots b_k$ belongs to the center, it is a product of 
at least two generators $p_i$, and by the minimality of $k$, this factorization must be of the form 
$a^j\cdot a^{m+1-j}b_1\dots b_k$. Hence some power $a^j\in Z(KQ/I)$, with $j>0$, and by the previous lemma, 
in fact $a\in Z(KQ/I)$. 

Repeating this argument for each arrow in each of the finitely many generators gives the desired conclusion.

The converse is obvious since the quiver is assumed to be finite.
\end{proof}


As the positively graded part of the center consists of paths 
of loops it can be 
decomposed into subspaces
around the vertices.
We make the following definition.
\begin{defn}\label{subquiver-ideal}
Let $Q_x$ be the subquiver
of $Q$ consisting of the point $x$, 
arrows $a$ such that $\mathfrak{o}(a)=x$ or $\mathfrak{t}(a)=x$ and the 
vertices where these arrows have their origin/target. The point $x$ is called the basepoint of $Q_x$.
 Let $I_x$ be the ideal generated by the relations $\rho$ in $I$ such that the arrows in the relation $\rho$ 
 lie in $Q_x$. 
\end{defn}
If $I$ is an admissible (anti-)commutativity ideal, then $I_x$ will be an admissible (anti-)commutativity
ideal of $KQ_x$. If $I$
is square-free, $I_x$ will be square-free. It follows from Theorem \ref{vsp} and Theorem \ref{vspsemi} that
$$Z^+(KQ/I)=\bigoplus_{x\in Q_0}Z^+(KQ_x/I_x).$$

Another way to have a finitely generated center is if 
the center is trivial, i.e. $Z(KQ/I)=K$. 

Let $M$ be a set of arrows of $Q$ such that $ab=ba\neq 0$ for all $a,b\in M$. Let $B$ be the
set of all monomials in $KQ/I$ formed from $M$. 
We call the sets
$B_1, B_2, \dots B_k$ the \tbf{commutating blocks} of $KQ/I$. Note that these 
sets $B_1, B_2, \dots, B_k$
don't have to be disjoint, since commutativity is not a transitive property in partly
commutative quiver algebras. 
One can also find these blocks from the generator graph $\Gamma_I$ by first removing all the
directed edges and then considering the cliques in the remaining graph. The cliques correspond
to the commutating blocks.
In an analogous way we define the \tbf{anti-commutating blocks}.

\begin{prop}
 Let $I$ be a square-free (anti-)commutativity ideal. Let $\La=KQ/I$ and let 
 $\La_x=KQ_x/I_x$. Then $Z(\La_x)$ is non-trivial if and only if there exists an (anti-)commutating block, $B$,
 of $KQ_x/I_x$ such that both of the following conditions hold:
 \begin{itemize}
  \item[(i)] For any arrow $c\notin B$ with $\ori(c)=x$ there exists an arrow $a\in B$ such that $ac\in I_x$.
  \item[(ii)] For for any arrow $d\notin B$ with $\tar(d)=x$ there exists an arrow $b\in B$ such that $db\in I_x$.  
 \end{itemize}
\end{prop}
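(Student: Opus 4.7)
The plan is to translate the statement through Theorems \ref{vsp} and \ref{vspsemi}, which give explicit bases for $Z^+(\La_x)$ in terms of monomials of loops at $x$ satisfying pairwise commutation/anti-commutation and certain annihilator conditions. Non-triviality of $Z(\La_x)$ is equivalent to $Z^+(\La_x) \neq 0$, since $Z^0(\La_x) = K$. I will spell out the commutative case; the anti-commutative case is parallel, invoking Theorem \ref{vspsemi} (and its odd- and even-length cases) in place of Theorem \ref{vsp}.

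For the $(\Leftarrow)$ direction, suppose such a block $B$ exists, generated by an arrow set $M = \{a_1, \ldots, a_n\}$. I consider the monomial $p = a_1 a_2 \cdots a_n$, which is non-zero and permutation-independent thanks to pairwise non-trivial commutation within $M$. I then verify the two conditions of Theorem \ref{vsp} for $p$: condition (1) is exactly the block axiom. For condition (2), applied to an arrow $y \in Q_x$: when $y \in M$, it commutes with every $a_i$; when $y \notin M$ is a loop at $x$, both $\ori(y) = x$ and $\tar(y) = x$, so (i) and (ii) respectively produce the left and right annihilators $a_i y = 0$ and $y a_j = 0$ required by the second option of condition (2); when $y$ is a non-loop adjacent to $x$, exactly one of (i), (ii) applies, but the other side of the annihilator is automatic since $y$ fails to start or end at $x$. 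Hence $p \in Z^+(\La_x)$.

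For the $(\Rightarrow)$ direction, from $Z^+(\La_x) \neq 0$ and Theorem \ref{vsp}, I pick a non-zero central monomial $p = a_1 \cdots a_k$ whose arrow set $M_0$ consists of pairwise non-trivially commuting loops at $x$. I enlarge $M_0$ to a maximal set $M$ of pairwise non-trivially commuting loops at $x$; this is well-defined because $I$ is square-free, so each loop has non-zero square and can be checked against existing elements one at a time, and the procedure terminates by finiteness of $Q$. The block $B$ generated by $M$ is my candidate. To verify (i) and (ii), I rely on two ingredients: condition (2) of Theorem \ref{vsp} applied to $p$ supplies annihilator pairs for any arrow not commuting with all of $M_0$, and maximality of $M$ ensures that every loop $c$ at $x$ outside $M$ fails to commute non-trivially with some $a \in M$. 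A case analysis on the kind of failure (trivial commutation, one-sided zero, or inequality of non-zero products) pinpoints the arrow of $M$ that annihilates $c$ on each side; for non-loop arrows adjacent to $x$, condition (2) gives the relevant annihilator on one side and triviality handles the other.

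The main obstacle is precisely this case analysis in the $(\Rightarrow)$ direction. Condition (2) of Theorem \ref{vsp} provides annihilators $a_i y = 0$ and $y a_j = 0$ jointly, but (i) and (ii) separately demand annihilation on each side by elements of the block; when an outside loop $c$ commutes with all of $M_0$ under Theorem \ref{vsp}(2) option~1, one must use the maximality of $M$ together with the structure of $I$ (via Lemma \ref{comm-mono}) to extract independent left- and right-annihilators within $M$. The anti-commutative analogue proceeds identically, drawing on both the odd-length condition (with odd-multiplicity arrows and the one-sided annihilator requirement) and the even-length condition (with anti-commuting outside arrows or the annihilator option) of Theorem \ref{vspsemi}.
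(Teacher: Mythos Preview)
Your $(\Leftarrow)$ argument is correct and more explicit than the paper's, which simply asserts the contrapositive. The problem lies in your $(\Rightarrow)$ direction. After enlarging $M_0$ to a maximal commuting set $M$, you claim that a case analysis on how an outside loop $c$ fails to commute non-trivially with some $a \in M$ will produce annihilators in $M$ on both sides. But maximality guarantees only a single such failure, and that failure need not be two-sided: if, say, $ac = 0$ while $ca \neq 0$, or $ac \neq ca$ with both non-zero, you obtain at most one of (i), (ii) from this particular $a$, and nothing forces another element of $M$ to annihilate $c$ on the missing side. Your appeal to condition (2) of Theorem~\ref{vsp} for the original monomial $p$ does not help here, because $c$ may well commute with all of $M_0$ (the first alternative of that condition), giving no annihilators at all from $M_0$.

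In fact the statement as written appears to be false, so this gap is not repairable. Take $Q_x$ with three loops $a, b, c$ at $x$ and one outgoing non-loop arrow $e$, and let $I_x = \langle ab - ba,\ ac - ca,\ bc,\ ae \rangle$. This ideal is square-free, and one checks directly that $a$ is central, so $Z(\La_x)$ is non-trivial. Yet no commutating block satisfies both (i) and (ii): for the maximal block $\{a, b\}$ the outside loop $c$ has $ca \neq 0$ and $cb \neq 0$, so (ii) fails; for $\{a, c\}$ the outside loop $b$ has $ab \neq 0$ and $cb \neq 0$, so (i) fails; and each singleton block already fails (i) at one of the other two loops. The paper's own proof is a two-sentence invocation of Theorem~\ref{vsp} and does not confront this case; the difficulty is exactly the first bullet of condition (2) there, under which an arrow may commute with all of the $a_i$ without any annihilation relation being available.
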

\begin{proof}
 By Theorem \ref{vsp} and Theorem \ref{vspsemi} we have that if $a_1a_2\dots a_n\in Z(\La)$ we have that
 all $a_i$ belongs to the same (anti-)commutating block and for all other arrows with origin or target $x$
 we have the stated property above.
 
 Conversely, if the center is trivial, no such block exists.
\end{proof}

If $a$ and $b$ are loops at the same basepoint with $a\in Z(KQ/I)$, $a$ and $b$ may in general commute either
trivially, i.e. $ab=ba=0$, or non-trivially, i.e. $ab=ba\neq 0$. However, when $I^\perp$ is admissible,
only the latter case can happen:
\begin{lemma}\label{lemma-fg-cen}
Suppose $I$ is a commutativity ideal such that $I^\perp$ is admissible.
If $a$ is a loop such that $a\in Z(KQ/I)$ then 
\begin{itemize}
 \item[(i)] for any other loop $b$ with $\ori(b)=\ori(a)$ we have that $ab=ba\neq 0$,
 \item[(ii)] for every arrow $c$ such that $\ori(c)\neq\tar(c)=\ori(a)$ we have that $ca=0$ and
 \item[(iii)]for every arrow $d$ such that $\tar(d)\neq\ori(d)=\ori(a)$ we have that $ad=0$.
\end{itemize}
\end{lemma}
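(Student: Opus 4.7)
The plan is to handle the three claims separately. Items (ii) and (iii) follow immediately from centrality together with the fact that one of the two relevant compositions is already zero in $KQ$ for path-composition reasons. For (ii): since $a$ is a loop at $x = \ori(a) = \tar(a)$ and $\ori(c) \neq \tar(c) = x$, we have $\tar(a) = x \neq \ori(c)$, so the product $ac$ is not a valid path, hence $ac = 0$ in $KQ$; centrality of $a$ then gives $ca = ac = 0$ in $KQ/I$. The argument for (iii) is symmetric: $\tar(d) \neq \ori(d) = x = \ori(a)$ forces $\tar(d) \neq \ori(a)$, so $da = 0$ in $KQ$, and centrality yields $ad = da = 0$.

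The substantive claim is (i). Here I apply Theorem \ref{vsp} to the one-element product $a$, which implies that for every arrow $b \in Q_1$ either $ab = ba$ in $KQ/I$ (so $b$ commutes with $a$), or $ab = 0 = ba$ in $KQ/I$. Centrality of $a$ already guarantees the equation $ab = ba$, so for a loop $b$ at $x$ the only obstruction to (i) is the vanishing case $ab = ba = 0$, which I argue by contradiction. Assume $ab, ba \in I$. By Lemma \ref{comm-mono}, the length-$2$ monomial $ab$ is equivalent, via allowed transpositions, to a monomial $p a_i a_j q$ with $a_i a_j \in I_2$; since $ab$ has length $2$ the outer paths are trivial, and the only transposition available swaps $a$ and $b$. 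So either $ab \in I_2$ itself, or $ab \sim ba$ with $ba \in I_2$. The second option forces $ab - ba \in I_2$, which by the minimality of the generating set excludes $ba \in I_2$, contradiction. Hence $ab \in I_2$ as a monomial generator, and the same argument shows $ba \in I_2$. This places the directed $2$-cycle $a \to b \to a$ in $\Gamma_I$. Since the orthogonal is an involution, $(I^\perp)^\perp = I$, and Theorem \ref{adm} applied to $I^\perp$ translates its admissibility into acyclicity of $\Gamma_I$, contradicting the cycle just exhibited.

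The main obstacle is the minimality argument in part (i): the convention that a minimal generating set cannot simultaneously contain a commutativity relation $ab - ba$ along with one of the monomials $ab$ or $ba$ is precisely what prevents $ab \in I$ from arising indirectly rather than as a genuine monomial generator. Once this is in hand, the remaining translation between algebraic vanishing and combinatorial edges in the generator graph is essentially automatic.
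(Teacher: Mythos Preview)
Your proof is correct and follows the same strategy as the paper: both argue that if $ab = ba = 0$ for loops $a, b$ at the same vertex, then $\Gamma_I$ contains the directed $2$-cycle $a \to b \to a$, contradicting admissibility of $I^\perp$ via Theorem~\ref{adm}. You are more explicit than the paper in two places---deriving $ab \in I_2$ from $ab \in I$ via Lemma~\ref{comm-mono} and minimality, and spelling out $(I^\perp)^\perp = I$ so that Theorem~\ref{adm} applies to $I^\perp$---while your appeal to Theorem~\ref{vsp} is harmless but unnecessary, since the dichotomy $ab = ba \neq 0$ versus $ab = ba = 0$ is immediate from centrality alone.
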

\begin{proof}
 Assume that $a\in Z(KQ/I)$. If there exists a loop $b$ with $\ori(b)=\ori(a)$ such that $ab-ba$
 is not a commutation relation of $I$, 
then, since $ab=ba$ and composition of two loops at the same basepoint is never trivially $0$, 
we have that $ab\in I$ and $ba\in I$. If $ab\in I$ and $ba\in I$
we have a contradiction to the assumption that $I^\perp$ is 
admissible since $\Gamma_I$ will then contain a directed cycle and by Theorem \ref{adm} $I^\perp$
is then not admissible. 

For (ii), since $\ori(c)\neq\tar(a)$, we have that $ac=0$ and hence $ca=0$. 
An analogous argument in (iii) gives that $ad=0$.  
\end{proof}

\begin{prop}\label{fg-cen-prop}
  Let $I$ be a commutativity ideal such that $I^\perp$ is admissible. Let $\La=KQ/I$ and let 
 $\La_x=KQ_x/I_x$. 
 If $Z(\La_x)$ is non-trivial and finitely generated we have that there exists a non-empty set 
 of loops, $a$, such that $a$ commutes non-trivially with all other loops with basepoint $x$ and 
 for any $c$ such that $\ori(c)=x\neq\tar(c)$ we 
 have $ac\in I_x$
 and for any $d$ such that $\tar(d)=x\neq\ori(d)$ we have $da\in I_x$. 
\end{prop}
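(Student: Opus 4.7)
The plan is to reduce to the local algebra $\La_x = KQ_x / I_x$ and apply two earlier results in sequence: Proposition \ref{deg1}, which forces degree-one generation of the center, and Lemma \ref{lemma-fg-cen}, which extracts the precise form of the annihilation relations around a central loop. First, I would verify that $I_x$ inherits the structural hypotheses placed on $I$. The graph $\Gamma_{(I_x)^\perp}$ is the induced subgraph of $\Gamma_{I^\perp}$ on the vertices corresponding to arrows of $Q_x$, so absence of a directed cycle in $\Gamma_{I^\perp}$ implies the same for $\Gamma_{(I_x)^\perp}$, and Theorem \ref{adm} gives that $(I_x)^\perp$ is admissible. Applying Lemma \ref{square-free-admissible} to the admissible anti-commutativity ideal $I^\perp$ shows that $I$, and hence $I_x$, is a square-free commutativity ideal; thus Proposition \ref{deg1} is available on $\La_x$.

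Next, the hypothesis that $Z(\La_x)$ is non-trivial and finitely generated lets me invoke Proposition \ref{deg1}: $Z^+(\La_x)$ is generated in degree one, and in particular contains a non-zero element of degree one. By Lemma \ref{cycles} every element of $Z^+(\La_x)$ is a linear combination of cycles, so this degree-one element is a linear combination of loops in $Q_x$; by construction of $Q_x$ the only loops it contains are based at $x$. Combining with the permutation-homogeneous decomposition of Lemma \ref{sums} (single arrows occupy their own subspaces $V_p$), each individual loop appearing with non-zero coefficient must itself be central. This produces a non-empty set $S$ of loops at $x$ with $S \subseteq Z(\La_x)$.

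Finally, I would apply Lemma \ref{lemma-fg-cen} to every $a \in S$. Parts (i)--(iii) give exactly: $a$ commutes non-trivially with every other loop at $x$; for every arrow $d$ with $\tar(d) = x \neq \ori(d)$ one has $da = 0$, i.e.\ $da \in I_x$; and for every arrow $c$ with $\ori(c) = x \neq \tar(c)$ one has $ac = 0$, i.e.\ $ac \in I_x$. (The names $c$ and $d$ are simply swapped relative to the roles in Lemma \ref{lemma-fg-cen}.) This is precisely the conclusion. The proof is mostly assembly; the only real obstacle is the bookkeeping needed to confirm that $I_x$ inherits the hypotheses of the earlier results, and that the degree-one generators supplied by Proposition \ref{deg1} can be taken to be individual loops rather than genuine linear combinations.
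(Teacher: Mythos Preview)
Your proposal is correct and follows essentially the same route as the paper: obtain a central loop and then read off the three properties from Lemma \ref{lemma-fg-cen}. The paper's proof is a two-line appeal to Lemma \ref{lemma-fg-cen} and leaves implicit both the existence of a degree-one central element (via Proposition \ref{deg1}) and the verification that $I_x$ inherits the square-free and admissibility hypotheses; your version spells these out explicitly, which is an improvement in rigor rather than a difference in strategy.
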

\begin{proof}
 It follows from Lemma \ref{lemma-fg-cen} that any $a\in Z(KQ/I)$ has to commute non-trivially with any
 other loop at the same basepoint. The same lemma also gives the relations to other arrows in $Q_x$. 
\end{proof}

For $Z(KQ/I)$ we sum up the necessary condition for finite generation in the following theorem.
\begin{thm}\label{fg-cen-thm}
  Suppose $I$ is a commutativity ideal such that $I^\perp$ is admissible. If $Z(KQ/I)$ is finitely
  generated as a $K$-algebra then for all $x$ in $Q_0$ either $Z(KQ_x/I_x)$ is trivial
  or there exists a non-empty set, $S$, of arrows $a$, such that
  $$I_x\supseteq \langle ab-ba, ca, ad
 \rangle_{\tiny{\begin{array}{l}a\in S,\\ b \textrm{ loop with basepoint }x,\\ c,d\textrm{ arrows such that }\ori(c)\neq\tar(c)=x, \tar(d)\neq\ori(d)=x
           \end{array}}}
.$$
\end{thm}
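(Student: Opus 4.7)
The plan is to reduce the global claim to the local statement in Proposition \ref{fg-cen-prop} via the direct sum decomposition
\[
Z^+(KQ/I)=\bigoplus_{x\in Q_0}Z^+(KQ_x/I_x)
\]
recorded after Definition \ref{subquiver-ideal}, which ultimately rests on Theorem \ref{vsp}.

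First I would observe that for distinct vertices $y\neq z$ and any $\alpha\in Z^+(KQ_y/I_y)$, $\beta\in Z^+(KQ_z/I_z)$, the product $\alpha\beta$ vanishes, since $\alpha$ is a $K$-linear combination of loops at $y$ while $\beta$ is a $K$-linear combination of loops at $z$, so the endpoints cannot match. Consequently the projection
\[
\pi_x:Z(KQ/I)\twoheadrightarrow K\oplus Z^+(KQ_x/I_x)=Z(KQ_x/I_x)
\]
that kills the components at all other vertices is a surjective $K$-algebra homomorphism. One should also check that $(I_x)^\perp$ is admissible whenever $I^\perp$ is: the generator graph $\Gamma_{(I_x)^\perp}$ is simply the full subgraph of $\Gamma_{I^\perp}$ on the vertex set $(Q_x)_1$, so it inherits the absence of directed cycles from Theorem \ref{adm}.

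Second, since a homomorphic image of a finitely generated $K$-algebra is finitely generated, the hypothesis that $Z(KQ/I)$ is finitely generated yields the finite generation of each $Z(KQ_x/I_x)$. For every $x\in Q_0$ where $Z(KQ_x/I_x)$ is non-trivial, Proposition \ref{fg-cen-prop} then furnishes a non-empty set $S$ of loops at $x$ such that each $a\in S$ commutes non-trivially with every loop $b$ at $x$ (giving $ab-ba\in I_x$), annihilates every arrow $c$ with $\tar(c)=x\neq\ori(c)$ on the left ($ca\in I_x$), and every arrow $d$ with $\ori(d)=x\neq\tar(d)$ on the right ($ad\in I_x$). Assembling these relations produces exactly the claimed ideal containment.

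The principal point to verify is that $\pi_x$ really is a $K$-algebra homomorphism, which is the one non-trivial computation but follows directly from the orthogonality observation above; everything else is bookkeeping on top of Proposition \ref{fg-cen-prop}.
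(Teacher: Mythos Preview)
Your proposal is correct and follows the same route as the paper: invoke the decomposition $Z^+(KQ/I)=\bigoplus_{x}Z^+(KQ_x/I_x)$ and then apply Proposition~\ref{fg-cen-prop} vertex by vertex. The paper's proof is the single sentence ``Follows directly from the vector space decomposition of $Z^+(KQ/I)$ and Proposition~\ref{fg-cen-prop}''; you have simply made explicit the two points that sentence suppresses, namely that the projection $\pi_x$ is a $K$-algebra surjection (so finite generation descends) and that $(I_x)^\perp$ inherits admissibility from $I^\perp$ via the subgraph description of $\Gamma_{(I_x)^\perp}$.
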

\begin{proof}
 Follows directly from the vector space decomposition of $Z^+(KQ/I)$ and Proposition \ref{fg-cen-prop}.  
\end{proof}


That the conditions stated in the preceding proposition are not sufficient can be seen in the following example.
\begin{exmp}\label{counterexample}
 Let $Q$ be the following quiver
  $$\xymatrix{\circ_x \ar@(u,l)[]_{a} \ar@(r,u)[]_{b} 
  \ar@(d,r)[]_{d} 
  \ar@(l,d)[]_{c} \ar[r]^e & \circ_y 
  }$$
  and
  $I=\langle ab-ba, ac-ca, ad-da, cd-dc, bc, db, ae, ce  \rangle$.
  Refering to Theorem \ref{fg-cen-thm}, $S=\{a\}$, since 
  $$I_x\supseteq \{ab-ba, ac-ca, ad-da,ae\}.$$ 
  But we also have that $cd\in Z(KQ/I)$ and moreover $Z(KQ/I)$ will contain $c^md^n$ for $m,n\ge 1$, 
  but not $c^m$ or $d^n$ for any $m,n$. Hence $Z(KQ/I)$ is infinitely generated by Proposition \ref{deg1}.
  
\end{exmp}

To be able to see if the center is finitely generated we need to construct a new graph.
The relation graph is the generator graph enhanced with arrows for all trivial relations. 
\begin{defn}
 Let $Q$ be a quiver and $I$ a (anti-)commutativity ideal. We define the 
 \tbf{relation graph}, $\Gamma_\mrm{rel}$, of $KQ/I$ as follows:
 \begin{itemize}
  \item For every arrow $a\in Q_1$ we have a vertex $a\in \Gamma_\mrm{rel}$.
  \item For every pair of arrows $a,b\in Q_1$, such that $ab=0\in KQ/I$ we have a directed edge
  $a\to b$ in $\Gamma_\mrm{rel}$.
  \item For every (anti-)commutativity relation $ab-ba\in I_2$ or $ab+ba\in I_2$ we have an 
undirected edge between $a$ and $b$.

By a clique in $\Gamma_\mrm{rel}$ we mean an undirected subgraph of $\Gamma_\mrm{rel}$ that is a complete graph.
We denote a clique by its vertex set. 
 \end{itemize}
\end{defn}
For our
purposes we can consider $KQ_x/I_x$, for all $x$, to find out everything we need about $Z(KQ/I)$.

The following theorem gives one of our main results, namely necessary and sufficient conditions
for finite generation of the center. 
\begin{thm}\label{rel-graph}
 Let $I$ be a commutativity ideal such that $I^\perp$ is admissible. Consider the relation graph 
 $\Gamma_\mrm{rel}$. 
 \begin{itemize}
  \item[(i)] We have that $a_1a_2\dots a_n\in Z(KQ/I)$ if and only if $\{a_1, \dots, a_n\}$ is 
  a clique of loops in $\Gamma_\mrm{rel}$ and for any other vertex $b$
  in the graph either 
  \begin{itemize}
  \item there exists a directed edge from some $a_i$ in the clique to $b$ and a directed
  edge from $b$ to some $a_j$ in the clique \textbf{or}
  
  \item $\{a_1, \dots, a_n, b\}$ is also a clique in $\Gamma_\mrm{rel}$. 
 \end{itemize}
    \item[(ii)] $Z(KQ/I)$ is finitely generated if and only if whenever we have a clique fulfilling the
conditions in (i), each of its vertices fulfills these conditions separately, considered as one-element cliques.  
 \end{itemize}
\end{thm}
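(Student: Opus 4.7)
The plan is to handle part (i) by a direct translation of Theorem \ref{vsp} into the vocabulary of the relation graph $\Gamma_\mrm{rel}$, and to handle part (ii) by combining that characterization with Proposition \ref{deg1} and the uniqueness of central monomials in permutation-homogeneous subspaces.

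For (i), I would first note that by Lemma \ref{loops-rel}, any undirected edge in $\Gamma_\mrm{rel}$ joins a pair of loops at a common basepoint. Hence the requirement that $\{a_1,\dots,a_n\}$ form a clique of loops in $\Gamma_\mrm{rel}$ is equivalent to the first condition of Theorem \ref{vsp}: the $a_i$ are loops at one common basepoint which pairwise commute non-trivially modulo $I$. For any other vertex $b$, a directed edge from some $a_i$ to $b$ in $\Gamma_\mrm{rel}$ means $a_ib=0$ in $KQ/I$, and a directed edge from $b$ to some $a_j$ means $ba_j=0$; the alternative that $\{a_1,\dots,a_n,b\}$ forms a clique says exactly that $b$ commutes non-trivially with every $a_i$. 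Thus the two bullets of (i) correspond precisely to the two bullets of the second condition of Theorem \ref{vsp}.

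For (ii), I would invoke Proposition \ref{deg1}: when $Z(KQ/I)$ is non-trivial, finite generation is equivalent to $Z^+(KQ/I)$ being generated in degree $1$. The trivial case $Z(KQ/I)=K$ occurs exactly when no clique satisfies (i), in which case the stated condition holds vacuously. In the non-trivial case I would argue both directions. For $(\Leftarrow)$, if every vertex of every central clique is itself central, then any central basis monomial $a_1\cdots a_n$ is a product of the degree-$1$ central elements $a_i$, so $Z^+$ is generated in degree $1$. For $(\Rightarrow)$, given a central basis monomial $p=a_1\cdots a_n$, the assumption that $Z^+$ is generated in degree $1$ lets me write $p$ as a polynomial in degree-$1$ central arrows. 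Using Lemma \ref{vsp-permhomo} together with the fact (implicit in the proof of Theorem \ref{vsp}) that each permutation-homogeneous subspace $V_p$ contains at most one central basis element, only products lying in $V_p$ contribute to the expression for $p$; any such product must use exactly the multiset of arrows of $p$. This forces every distinct arrow appearing in $p$ to be itself a degree-$1$ central element, so by (i) its singleton satisfies the required conditions.

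The main obstacle I foresee is the uniqueness step in the $(\Rightarrow)$ direction of (ii): making sure that the decomposition of $p$ in terms of degree-$1$ central generators truly forces those generators to be arrows already appearing in $p$. Once this is cleanly extracted from the $V_p$-decomposition of Lemma \ref{vsp-permhomo}, the rest of the argument is routine bookkeeping. Example \ref{counterexample} is a useful sanity check, since it exhibits a central clique $\{c,d\}$ whose vertices $c,d$ are not central on their own, and the center is correspondingly infinitely generated.
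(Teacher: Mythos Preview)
Your proposal is correct and matches the paper's approach: part (i) is a direct translation of Theorem \ref{vsp} into the language of $\Gamma_\mrm{rel}$, and part (ii) rests on Proposition \ref{deg1}. The paper's one-line proof additionally cites Lemma \ref{lemma-fg-cen} where you invoke the $V_p$-decomposition of Lemma \ref{vsp-permhomo}, but both devices close the same gap in the $(\Rightarrow)$ direction.
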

\begin{proof}
 Part (i) follows directly from Theorem \ref{vsp}. Part (ii) follows from Proposition \ref{deg1} and Lemma \ref{lemma-fg-cen}.  
\end{proof}

\begin{exmp}\label{counterexample-forts}
 The relation graph for the algebra in Example \ref{counterexample} looks as follows:
 
 $$\xymatrix{e \ar@/^/[rr] \ar@/^1pc/[rrrr] \ar@/_/[dr] \ar[drrr]|!{[d];[rrr]}\hole &&a \ar@/^/[ll] \ar@{-}[rr]&
 &b \ar `_dl[dl]`/1pt[ll]`l/7pt[lll]`_u \\
 &c \ar@/_/[ul] \ar@{-}[rr] \ar@{-}[ur]&&d \ar[ur] \ar@{-}[ul]&
  }$$
 
 From this graph we can see that the clique $\{a, c, d\}$ fulfills property (i)
 in Theorem \ref{rel-graph}, while neither $c$ nor $d$ fulfills this property on their own.
 By Theorem \ref{rel-graph} $Z(KQ/I)$ is infinitely generated.
 
 One may also consider the clique $\{a,b\}$. This clique does not fulfill property (i) in Theorem 
 \ref{rel-graph},
 since for vertex $c$ we have only a directed edge from the clique to $c$ and none from $c$ to the clique. 
 Moreover, there is a directed edge from $\{a,b\}$ to $d$, but none in the reverse direction.
\end{exmp}

%

 
 There is, of course, an anti-commutative analogue of Proposition \ref{fg-cen-prop}, Theorem \ref{fg-cen-thm}
 and Theorem \ref{rel-graph}.
 With the help of the two following lemmas 
 we get the results for partly anti-commutative quiver algebras with analogous proofs.
 
\begin{lemma}\label{sq-cen-anti}
 Let $I$ be a square-free anti-commutativity ideal and $a\in Q_1$. 
 Assume $a^{2k}\in Z(KQ/I)$, then $a^2\in Z(KQ/I)$.  
\end{lemma}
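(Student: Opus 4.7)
The plan is to mirror the proof of Lemma \ref{sq-cen}, but using the even-length case of Theorem \ref{vspsemi} and tracking the sign carefully. First I would dispose of the trivial case where $a$ is not a loop: then $a^2=0$ already in $KQ$, so $a^2\in Z(KQ/I)$ automatically.

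From now on I would assume $a$ is a loop at some vertex $x$. In that case $a^{2k}$ is non-zero in $KQ$, and since $I$ is square-free (so $a^2\notin I$) Lemma \ref{anti-comm-mono} shows $a^{2k}\notin I$ as well. Hence the hypothesis $a^{2k}\in Z(KQ/I)$ gives a non-trivial central element to which the even-length part of Theorem \ref{vspsemi} applies. Since the only arrow appearing in $a^{2k}$ is $a$, the relevant alternative for each arrow $b\neq a$ reduces to: either (A) $b$ anti-commutes non-trivially with $a$, i.e.\ $ab=-ba\neq 0$, or (B) $ab=0=ba$.

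Next I would verify that $a^2$ commutes with every arrow $b$. In case (A),
\[
a^2b \;=\; a(ab) \;=\; a(-ba) \;=\; -(ab)a \;=\; -(-ba)a \;=\; ba^2.
\]
In case (B), both $a^2b=a(ab)=0$ and $ba^2=(ba)a=0$. Commutation with $a$ itself is trivial. Therefore $a^2$ commutes with every arrow, hence with every element of $KQ/I$, so $a^2\in Z(KQ/I)$.

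The only delicate point is the sign manipulation in case (A); beyond that the argument is a direct transcription of the commutative proof, with the even exponent serving precisely to cancel the sign produced by anti-commutation.
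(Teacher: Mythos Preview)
Your proof is correct and follows essentially the same route as the paper: apply the even-length case of Theorem~\ref{vspsemi} to the central monomial $a^{2k}$ to obtain, for every other arrow $b$, either $ab+ba\in I$ or $ab=ba=0$, and conclude $a^2\in Z(KQ/I)$. Your version is simply more explicit---handling the non-loop case, verifying $a^{2k}\neq 0$, and writing out the sign computation $a^2b=ba^2$---where the paper's one-line proof leaves these steps to the reader.
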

\begin{proof}
 Assume $a^{2k}\in Z(KQ/I)$. Theorem \ref{vspsemi} gives that for any other arrow $b\in Q_1$ 
 either $ab+ ba\in I$
 or $ab=ba=0$, i.e. $a^2\in Z(KQ/I)$. 
\end{proof}

 \begin{lemma}\label{deg2}
Let $I$ be a square-free anti-commutativity ideal. 
If $Z(KQ/I)$ is finitely generated as a $K$-algebra, 
then $Z^+(KQ/I)$ is generated by elements
of the form $a^2$. 

\end{lemma}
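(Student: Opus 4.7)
The plan is to imitate the proof of Proposition~\ref{deg1}, replacing Lemma~\ref{sq-cen} by Lemma~\ref{sq-cen-anti} and lifting the multiplicity of an arrow by two rather than one, so as to preserve the common parity of exponents required by Theorem~\ref{vspsemi}.

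Suppose $Z(KQ/I)$ is finitely generated. By Theorem~\ref{vspsemi} and Lemma~\ref{sums} we may choose a finite generating set consisting of non-zero monomials $p_1,\dots,p_n$ of anti-commuting loops at common basepoints, each having all exponents of the same parity. Fix any arrow $a$ occurring in some generator and choose $p_1 = a^m b_1^{e_1}\cdots b_k^{e_k}$ containing $a$ with $k$ (the number of distinct arrows other than $a$) minimal and $m$ maximal for that $k$. Consider $q = a^{m+2} b_1^{e_1}\cdots b_k^{e_k}$: its arrow set and common parity of exponents agree with those of $p_1$, so $q \in Z^+(KQ/I)$ by Theorem~\ref{vspsemi}; by the maximality of $m$ it cannot itself be a generator and therefore factors as $q = p_{i_1}\cdots p_{i_\ell}$ with $\ell\geq 2$.

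The main combinatorial step is to isolate a pure-power factor $a^j$ in any such factorization. By the minimality of $k$, any factor containing $a$ must carry at least $k$ arrows other than $a$; since $q$'s arrow set is exactly $\{a, b_1,\dots,b_k\}$, every such factor must contain all of $b_1,\dots,b_k$. Tracking the multiplicities of each $b_i$ together with the same-parity constraint on each factor's exponents will force at most one factor to carry $a$ simultaneously with all the $b_i$'s, and any remaining $a$-carrying factor is then a pure power $a^j$ with $j\geq 1$. Since $a$ anti-commutes non-trivially with every $b_i$ in $p_1$, condition~(2) of Theorem~\ref{vspsemi} applied to $a^j$ rules out odd $j$, so $j$ is even; Lemma~\ref{sq-cen-anti} then yields $a^2 \in Z^+(KQ/I)$.

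Running this extraction over every arrow occurring in every generator produces a finite family of squares inside $Z^+(KQ/I)$. Any generator whose common parity is even is then automatically a product of these squares. The hard part will be the parity bookkeeping in the extraction step---verifying that the constraints really do force a pure-power factor $a^j$---together with handling odd-parity generators: these should be treated by applying the same extraction argument to $p_i$ itself in place of the auxiliary $q$, forcing the accompanying squares into the generating family and reducing the odd-parity case to products of squares by the finite-generation hypothesis.
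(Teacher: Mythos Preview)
Your approach is the paper's --- imitate Proposition~\ref{deg1} with $m+2$ in place of $m+1$ and appeal to Lemma~\ref{sq-cen-anti} --- but the extraction step has a gap. You minimise the number $k$ of \emph{distinct} arrows other than $a$; the paper instead minimises the \emph{total} number, writing $p_1=a^m b_1\cdots b_k$ with the $b_i$ listed with repetition. With the total count the pigeonhole is immediate: $q=a^{m+2}b_1\cdots b_k$ carries exactly $k$ non-$a$ arrows in all, every generator containing $a$ carries at least $k$, so at most one factor in the product contains $a$; by maximality of $m$ that factor has at most $m$ copies of $a$, too few, forcing a pure-power central element $a^j$. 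With your distinct-arrow convention this breaks down: if, say, $p_1=a^2b^4c^4$ and $p_2=a^2b^2c^2$ are both generators (each with $k=2$ distinct non-$a$ arrows and $m=2$), then $q=a^4b^4c^4=p_2\cdot p_2$ factors with two $a$-carrying factors and no pure power at all, so neither the multiplicity bounds $e_i\geq r$ nor the common-parity constraints you invoke rule this out. Simply switching to the paper's total-count minimisation removes the difficulty, and no parity bookkeeping is needed in the extraction.

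Your explicit argument that the resulting $j$ must be even (since $a$ anti-commutes non-trivially with each $b_i$, condition~2 of Theorem~\ref{vspsemi} excludes odd $j$) is a genuine detail the paper glosses over. On the other hand, your final paragraph about odd-parity generators remains only a sketch; the paper's proof ends with the equally terse ``repeating this argument \dots\ gives the desired conclusion'' and does not spell this step out either.
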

\begin{proof} 
Assume $Z(KQ/I)$ is finitely generated by generators $p_1,\dots,p_n$. 
We may, by Theorem \ref{vspsemi}, assume each $p_i$ is a product of anti-commuting loops at the same 
basepoint. 

Let $a$ be any arrow occurring in any one of the $p_i$, and suppose, without loss of generality, 
that $p_1=a^m b_1\dots b_k$ is the generator in which a power of $a$ 
occurs together with a minimal number $k$ of arrows $b_i\neq a$.

Since also $a^{m+2}b_1\dots b_k$ belongs to the center, it is a product of 
at least two generators $p_i$, and by the minimality of $k$, this factorization must be of the 
form $a^j\cdot a^{m+2-j}b_1\dots b_k$. Hence some power $a^j\in Z(KQ/I)$, with $j>0$, 
and by the previous lemma, in fact $a^2\in Z(KQ/I)$. 

Repeating this argument for each arrow in each of the finitely many generators gives the desired conclusion.
\end{proof}

\begin{lemma}\label{lemma-semi-onepoint}
Suppose $I$ is a anti-commutativity ideal such that $I^\perp$ is admissible.
If $a$ is a loop such that $a^2\in Z(KQ/I)$ then 
\begin{itemize}
 \item[(i)] for any other loop $b$ with $\ori(b)=\ori(a)$ we have that $ab=-ba\neq 0$,
 \item[(ii)] for every arrow $c$ such that $\ori(c)\neq\tar(c)=\ori(a)$ we have that $ca=0$ and
 \item[(iii)]for every arrow $d$ such that $\tar(d)\neq\ori(d)=\ori(a)$ we have that $ad=0$.
\end{itemize}
\end{lemma}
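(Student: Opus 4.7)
The plan is to deduce everything from Theorem \ref{vspsemi} applied to the even-length element $a^{2}$, with the hypothesis that $I^\perp$ is admissible used to rule out one degenerate case. Writing $x=\ori(a)=\tar(a)$, Theorem \ref{vspsemi} (case $k$ even, with the monomial $a \cdot a$) tells us that for every arrow $b\in Q_1$ either $ba=-ab\neq 0$, or else there exist indices $i,j$ (necessarily both labelling the single arrow $a$) such that $ab=0=ba$. This dichotomy is the engine of the whole argument.

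For part (i), take $b$ to be another loop at $x$. Both $ab$ and $ba$ are then nonzero in $KQ$ for trivial reasons (loops at the same basepoint always compose). If we were in the second case of the dichotomy, we would have $ab\in I$ and $ba\in I$ with $ab,ba\neq 0$ in $KQ$, so by the definition of the generator graph $\Gamma_I$ would contain the directed edges $a\to b$ and $b\to a$, forming a directed cycle. By Theorem \ref{adm} this contradicts admissibility of $I^\perp$, and so the first case must hold, giving $ab=-ba\neq 0$.

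For parts (ii) and (iii), the arguments are symmetric. For (ii), $c$ has $\ori(c)\neq\tar(c)=x$, so $ac=0$ in $KQ$ for trivial reasons (the target of $a$ does not match the origin of $c$). The first alternative of the dichotomy demands $ca=-ac\neq 0$, which is impossible, so we are forced into the second alternative, yielding $ca=0$. The case (iii) is identical with the roles of origin and target reversed: $da=0$ trivially, which rules out the nontrivial anti-commutation and forces $ad=0$.

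The only nonroutine step is the reduction in (i), where one must notice that the failure of nontrivial anti-commutation between two loops at a common basepoint automatically produces the two opposite directed edges needed to form a cycle in $\Gamma_I$; aside from this, the lemma is essentially a transcription of Lemma \ref{lemma-fg-cen} to the anti-commutative setting, with Theorem \ref{vspsemi} playing the role that Theorem \ref{vsp} played there.
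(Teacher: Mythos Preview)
Your proof is correct and follows essentially the same route the paper intends (it says only ``completely analogous to the proof of Lemma~\ref{lemma-fg-cen}''): establish the dichotomy ``$ab=-ba\neq 0$ or $ab=ba=0$'' for every arrow $b$, then use Theorem~\ref{adm} to exclude the second alternative when $b$ is a loop at $x$. The one variation is that you extract the dichotomy from Theorem~\ref{vspsemi}, whereas the paper's proof of Lemma~\ref{lemma-fg-cen} argues directly from centrality (Theorem~\ref{vsp} is not actually invoked there, contrary to your closing remark); your shortcut is perfectly valid, but note that Theorem~\ref{vspsemi} requires $I$ to be square-free, which you are tacitly using---this does follow from the hypothesis, since a square $a^2\in I$ would put a loop at $a$ in $\Gamma_I$ and hence, by Theorem~\ref{adm}, force $I^\perp$ to be non-admissible.
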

\begin{proof}
Completely analogous to the proof of Lemma \ref{lemma-fg-cen}. 
\end{proof}

\begin{prop}\label{fg-cen-prop-anti}
  Let $I$ be n anti-commutativity ideal such that $I^\perp$ is admissible. Let $\La=KQ/I$ and let 
 $\La_x=KQ_x/I_x$. 
 If $Z(\La_x)$ is non-trivial and finitely generated we have that there exist a non-empty set 
 of loops, $a$, such that $a$ anti-commutes, non-trivially, with all other loops with basepoint $x$ and 
 any such loop has the property that for any $c$ such that $\ori(c)=x\neq\tar(c)$ we 
 have $ac\in I_x$
 and for any $d$ such that $\tar(d)=x\neq\ori(d)$ we have $da\in I_x$. 
\end{prop}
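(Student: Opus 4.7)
The plan is to mimic the proof of Proposition \ref{fg-cen-prop} exactly, substituting the anti-commutative analogues of the supporting results at each step. The only substantive difference is that in the anti-commutative setting the generators of the center appear in even powers of single loops, so I work with $a^2$ rather than $a$.

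First I would invoke the vector space decomposition $Z^+(KQ/I)=\bigoplus_{x\in Q_0}Z^+(KQ_x/I_x)$, which reduces the question to $\La_x=KQ_x/I_x$. Since $Z(\La_x)$ is assumed non-trivial and finitely generated as a $K$-algebra, Lemma \ref{deg2} (applied to the square-free anti-commutativity ideal $I_x$, noting that $I^\perp$ admissible implies $I$ admissible and hence $I$ contains all non-zero squares by Lemma \ref{square-free-admissible}, so $I_x$ is square-free) yields that $Z^+(\La_x)$ is generated by elements of the form $a^2$, where $a$ is a loop at $x$.

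Next, I would let $S$ be the set of loops $a$ with basepoint $x$ such that $a^2\in Z(\La_x)$. Because $Z^+(\La_x)$ is non-trivial and generated by such squares, $S$ is non-empty. For each $a\in S$, Lemma \ref{lemma-semi-onepoint} applies (its hypothesis is exactly that $I^\perp$ is admissible and $a^2\in Z(\La_x)$) and gives precisely the three desired conclusions: for any other loop $b$ with $\ori(b)=x$ one has $ab=-ba\neq 0$, and for any arrow $c$ with $\ori(c)\neq\tar(c)=x$ we have $ca=0$, and for any arrow $d$ with $\tar(d)\neq\ori(d)=x$ we have $ad=0$. Translating $ca=0$ and $ad=0$ back into ideal membership, these say $ca\in I_x$ and $ad\in I_x$, which is the statement we wanted (after re-labelling $c,d$ to match the formulation).

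Since each step invokes a lemma that has already been established, there is no real obstacle; the only point requiring a moment's care is to make sure that in passing from the finitely generated hypothesis on $Z(\La_x)$ to a generator of the form $a^2$ one does use Lemma \ref{deg2} rather than Proposition \ref{deg1}, since in the anti-commutative case odd-length monomial generators cannot be ruled out directly but any such are absorbed into squares via Lemma \ref{sq-cen-anti}. The final sentence of the proof therefore reads: \emph{Follows from Lemma \ref{deg2} and Lemma \ref{lemma-semi-onepoint}, by the same argument as in Proposition \ref{fg-cen-prop}.}
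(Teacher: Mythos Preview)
Your approach is correct and matches the paper's (implicit) proof, which simply invokes Lemma \ref{deg2} and Lemma \ref{lemma-semi-onepoint} as the anti-commutative analogues of Proposition \ref{deg1} and Lemma \ref{lemma-fg-cen}. One small slip in your side remark: the justification that $I_x$ is square-free is garbled---$I^\perp$ admissible does not imply $I$ admissible, and ``$I$ contains all non-zero squares'' is the opposite of square-free; the correct deduction is to apply Lemma \ref{square-free-admissible} to the admissible ideal $I^\perp$, which yields directly that $(I^\perp)^\perp=I$ is square-free.
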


\begin{thm}\label{fg-cen-thm-anti}
  Suppose $I$ is an anti-commutativity ideal such that $I^\perp$ is admissible. If $Z(KQ/I)$ is finitely
  generated as a $K$-algebra then for all $x$ in $Q_0$ either $Z(KQ_x/I_x)$ is trivial
  or there exists a non-empty set, $S$, of loops, such that
  $$I_x\supseteq \langle ab+ba, ca, ad
 \rangle_{\tiny{\begin{array}{l}a\in S,\\b\textrm{ loop with basepoint }x,\\ c,d\textrm{ arrows such that }\ori(c)\neq\tar(c)=x, \tar(d)\neq\ori(d)=x
           \end{array}}}
.$$
\end{thm}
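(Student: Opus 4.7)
The plan is to mimic the proof of Theorem \ref{fg-cen-thm} with the obvious substitutions from the anti-commutative setting. First I would invoke the vector space decomposition
$$Z^+(KQ/I)=\bigoplus_{x\in Q_0}Z^+(KQ_x/I_x),$$
which is valid because each basis element provided by Theorem \ref{vspsemi} is a monomial of loops based at some single vertex $x$, and any such monomial actually lies in $KQ_x/I_x$. Because $Q_0$ is finite, finite generation of $Z(KQ/I)=K\oplus Z^+(KQ/I)$ forces each summand $Z^+(KQ_x/I_x)$ to be finitely generated as a $K$-subalgebra of $Z(KQ_x/I_x)$.

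Fix $x\in Q_0$ and suppose $Z(KQ_x/I_x)$ is \emph{not} trivial, i.e.\ $Z^+(KQ_x/I_x)\neq 0$. Applying Proposition \ref{fg-cen-prop-anti} to the algebra $\La_x=KQ_x/I_x$, I obtain a non-empty set $S$ of loops at $x$ such that every $a\in S$ anti-commutes non-trivially with every loop $b$ based at $x$, and satisfies $ca=0$ for every arrow $c$ with $\ori(c)\neq\tar(c)=x$ and $ad=0$ for every arrow $d$ with $\tar(d)\neq\ori(d)=x$. Translating these equalities in $\La_x$ back into membership statements in $I_x$ gives, for each $a\in S$,
$$ab+ba\in I_x,\qquad ca\in I_x,\qquad ad\in I_x,$$
for all loops $b$ at $x$ and all arrows $c,d$ as above. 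These are precisely the generators of the ideal on the right-hand side of the claimed inclusion, so $I_x$ contains that ideal.

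The only subtlety—and the place I would pay closest attention—is the inheritance of finite generation from $Z^+(KQ/I)$ to each local summand $Z^+(KQ_x/I_x)$. This is not automatic for an arbitrary direct sum of algebras, but here it is immediate because the idempotent $x$ acts as the identity on $Z^+(KQ_x/I_x)$ (every element is a combination of cycles through $x$ by Lemma \ref{cycles}) and annihilates the other summands, so multiplying a finite generating set of $Z^+(KQ/I)$ by $x$ yields a finite generating set of $Z^+(KQ_x/I_x)$. With this observation in hand, the reduction to Proposition \ref{fg-cen-prop-anti} is the only real content, and the remainder of the argument is bookkeeping.
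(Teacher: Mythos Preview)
Your proof is correct and follows exactly the approach the paper intends: the paper states that the anti-commutative versions of Proposition \ref{fg-cen-prop}, Theorem \ref{fg-cen-thm} and Theorem \ref{rel-graph} hold ``with analogous proofs'', and the commutative prototype (Theorem \ref{fg-cen-thm}) is proved in one line by invoking the vector space decomposition of $Z^+(KQ/I)$ together with Proposition \ref{fg-cen-prop}. Your write-up simply unpacks this, adding a careful justification (via the vertex idempotent) of why finite generation descends to each summand $Z^+(KQ_x/I_x)$---a point the paper leaves implicit.
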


Picking the central monomials in more subtle in the anti-commutative case, and we refer to Theorem \ref{vspsemi}
for the exact procedure. However, finite generation of the center can be read off from the relation graph
just as in the commutative case. 
\begin{thm}\label{rel-graph-anti}
 Let $I$ be an anti-commutativity ideal such that $I^\perp$ is admissible. 
 Consider the relation graph $\Gamma_\mrm{rel}$. 
 \begin{itemize}
  \item[(i)] If $a_1a_2\dots a_n\in Z(KQ/I)$ then the vertices $a_1, \dots, a_n$ are all
  contained in a clique in $\Gamma_\mrm{rel}$ and for any other vertex $b$
  in the graph either 
  \begin{itemize}
  \item there exists a directed edge from some $a_i$ in the clique to $b$ and a directed
  edge from $b$ to some $a_j$ in the clique \textbf{or}
  
  \item $\{a_1, \dots, a_n, b\}$ is also a clique in $\Gamma_\mrm{rel}$. 
 \end{itemize}
    \item[(ii)] $Z(KQ/I)$ is finitely generated if and only if whenever we have a clique fulfilling
the conditions in (i), each of its vertices fulfills these conditions separately, considered as one-elements cliques.  
 \end{itemize}
\end{thm}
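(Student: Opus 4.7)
The plan is to mirror the structure of the proof of Theorem \ref{rel-graph}, appealing to the anti-commutative analogues of the underlying lemmas: Theorem \ref{vspsemi} in place of Theorem \ref{vsp}, Lemma \ref{lemma-semi-onepoint} in place of Lemma \ref{lemma-fg-cen}, and Lemma \ref{deg2} in place of Proposition \ref{deg1}.

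First I would establish part (i) as a direct translation of Theorem \ref{vspsemi} into graph-theoretic language. Given a central monomial $a_1a_2\dots a_n$, Theorem \ref{vspsemi} (in the appropriate parity case) forces the $a_i$ to be loops at a common basepoint with $a_ia_j=-a_ja_i\neq 0$ for distinct indices; this is exactly what it means for the distinct arrows among them to span a clique in $\Gamma_\mrm{rel}$, since anti-commutativity relations give undirected edges. For any other arrow $b$, the two cases supplied by Theorem \ref{vspsemi}, namely that $b$ either anti-commutes non-trivially with every $a_i$ or satisfies $a_ib=0=ba_j$ for some $i,j$, translate directly into either extending the clique by $b$ or having directed edges from some $a_i$ to $b$ and from $b$ to some $a_j$. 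Note (i) is only stated as an implication in the anti-commutative setting because Theorem \ref{vspsemi} imposes additional parity constraints on arrow multiplicities that are not visible in the clique structure.

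For the forward direction of part (ii), I would combine Lemma \ref{deg2} with Lemma \ref{lemma-semi-onepoint}. If $Z(KQ/I)$ is finitely generated, Lemma \ref{deg2} lets us assume the generators of $Z^+(KQ/I)$ are of the form $a^2$. Given any clique $\{a_1,\dots,a_n\}$ satisfying (i), the translation in the preceding paragraph together with the even-length case of Theorem \ref{vspsemi} shows $a_1^2a_2^2\cdots a_n^2 \in Z(KQ/I)$. Since this element lies in the permutation-homogeneous subspace $V_{a_1^2\cdots a_n^2}$ and $Z^+(KQ/I)$ is generated by central squares, any decomposition must use exactly the squares $a_i^2$, forcing $a_i^2 \in Z(KQ/I)$ for each $i$. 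Then Lemma \ref{lemma-semi-onepoint} applied in the one-vertex case shows $\{a_i\}$ satisfies the conditions in (i) on its own.

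For the reverse direction, suppose every vertex of every clique fulfilling (i) satisfies (i) as a one-element clique. Then $a^2 \in Z(KQ/I)$ for every such vertex $a$, by Lemma \ref{lemma-semi-onepoint} read in reverse. Any even-length central monomial, by Theorem \ref{vspsemi}, contains each arrow an even number of times, hence is (up to sign) a product of the finitely many central squares. Odd-length central monomials are governed by a stricter version of the conditions and each must be supported on a clique with one copy of each arrow, so there are only finitely many distinct ones; adjoining these to the set of central squares yields a finite generating set. The main obstacle I anticipate is precisely this parity-based asymmetry: the conditions in (i) capture the clique structure but not the parity requirements in Theorem \ref{vspsemi}, so both directions of (ii) require care when handling odd-length central monomials and the interplay between squares and the single odd-part monomial in each clique.
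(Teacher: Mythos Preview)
Your approach is exactly what the paper does: it gives no separate proof of Theorem \ref{rel-graph-anti}, instead declaring that the anti-commutative analogues of Proposition \ref{fg-cen-prop}, Theorem \ref{fg-cen-thm} and Theorem \ref{rel-graph} follow by analogous proofs once Lemma \ref{sq-cen-anti}, Lemma \ref{deg2} and Lemma \ref{lemma-semi-onepoint} are in hand, and you have correctly identified precisely these substitutions (Theorem \ref{vspsemi} for Theorem \ref{vsp}, Lemma \ref{deg2} for Proposition \ref{deg1}, Lemma \ref{lemma-semi-onepoint} for Lemma \ref{lemma-fg-cen}). Your additional care with odd-length central monomials in the reverse direction of (ii) actually goes beyond what the paper spells out, since Lemma \ref{deg2} is stated only as a one-way implication, whereas its commutative counterpart Proposition \ref{deg1} is an equivalence.
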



\section{The graded center}\label{the_graded_center}

In this section we give som lemmas relating the graded center to the even-degree center $Z^\ev(KQ/I)$,
which will be needed in the next section. We assume that $\mathrm{char}(K)\neq 2$.

Since quiver algebras bound by homogeneous ideals are graded algebras the center $Z(KQ/I)$ also has a grading.
Hence $Z(KQ/I)=\bigoplus_{k\ge0}Z^k(KQ/I)$. Let $$Z^\ev(KQ/I):=\bigoplus_{k\ge 0}Z^{2k}(KQ/I),$$ i.e. the
parts of the center of even grades. Obviously $Z^\ev(\La)\subseteq Z(\La)$.

\begin{defn}
 Let the degree of a monomial $p$ in $KQ/I$ be denoted by $|p|$. 
 The \tbf{graded center} $Z_\gr(KQ/I)$ 
 consists of the elements $p\in KQ/I$ such that
 $pq=(-1)^{|p||q|}qp$ for all $q\in KQ/I$.
\end{defn}

The graded center is of course graded and $Z_\gr^0(\La)=K$. Let $Z_\gr^+(\La)=\bigoplus_{k\ge 1}Z^k_\gr(\La)$.

\begin{lemma}\label{graded-monomial}
 Let $I$ be a square-free (anti-)commutativity ideal and $\La=KQ/I$. 
 Then $Z_{\gr}^+(\La)$ is spanned by non-zero monomials $a_1a_2\dots a_n$ of loops at the
 same basepoint such that
 $(a_ia_j)$ is an allowed transposition for any pair $a_i,a_j$ in the monomial.  
\end{lemma}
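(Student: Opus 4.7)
The plan is to adapt verbatim the three-step argument used for the ordinary center in Lemmas \ref{sums}, \ref{cycles}, and \ref{mono-center}. The only substantive change is that the defining equation $ag=ga$ (for an arrow $a$ and $g\in Z^+$) is replaced by the graded version $ag=(-1)^{|g|}ga$. Since the factor $(-1)^{|g|}$ is a scalar and the relevant obstruction arguments count positions of arrows inside monomials, not signs, the proofs carry over with only cosmetic modifications.

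First I would show that $Z_\gr^+(\La)$ is spanned by permutation homogeneous elements, mirroring Lemma \ref{sums}. Given a homogeneous $g\in Z_\gr^+(\La)$ of degree $n$, decompose $g=\rho_1+\cdots+\rho_r$ with each $\rho_i\in V_{p_i}/(I\cap V_{p_i})$ for distinct permutation classes. Since $I$ is permutation homogeneous (so $V_{p_i}/(I\cap V_{p_i})$ sit as independent summands by Lemma \ref{vsp-permhomo}) and since both $aV_{p_i}$ and $V_{p_i}a$ land in the permutation class of $ap_i$, the identity $ag=(-1)^n ga$ forces $a\rho_k=(-1)^n\rho_k a$ for every $k$. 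Hence each $\rho_k$ itself lies in $Z_\gr^+(\La)$, and we may henceforth assume $g$ is permutation homogeneous.

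Next I would recycle Lemma \ref{cycles} to conclude that $g$ is a linear combination of cycles at a common basepoint: the argument there relies only on left and right multiplication by vertex idempotents $\mathfrak{o}(p_1)$, which have degree $0$, so the graded sign contributes $(-1)^{0\cdot n}=1$ and the original proof applies unchanged. Finally, the analog of Lemma \ref{mono-center} runs as follows: write $g=\sum_i\alpha_i p_i$, fix any arrow $a$ appearing in some $p_i$ and any other arrow $b$, and choose $p_j$ maximising the number $m$ of copies of $a$ preceding the first occurrence of $b$. By Corollary \ref{cor-square} $ap_j\neq 0$, so $ag=(-1)^n ga$ forces $ap_j=\pm p_k a$ for some index $k$ in the sum. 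Comparing the position of the first $b$: the left-hand side has $m+1$ copies of $a$ before it, whereas the right-hand side has at most $m$ unless $(ab)$ is an allowed transposition (the sign $\pm$ is irrelevant to this counting). Iterating over all pairs of arrows present in the $p_i$ forces every pair to be an allowed transposition, so $g=\alpha p$ reduces to a single monomial and, since all arrows in $p$ commute up to sign, they must be loops at a common basepoint for $p$ to be nonzero.

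The only point that could be subtle is verifying that the first step really does preserve permutation homogeneity in the graded setting; but as noted, the sign is a global scalar independent of the individual $\rho_k$, so the directness of the decomposition $KQ/I\cong K\oplus\bigoplus_p V_p/(I\cap V_p)$ does the work. After this, the remainder is a routine transcription of the ungraded proof.
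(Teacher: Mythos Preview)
Your proposal is correct and follows essentially the same approach as the paper: reduce to permutation homogeneous elements via Lemma \ref{sums} (noting that the global sign $(-1)^{|g|}$ does not disturb the direct-sum decomposition), and then invoke the counting argument of Lemma \ref{mono-center} with $ag=ga$ replaced by $ag=\pm ga$. The paper's own proof is in fact terser than yours---it omits the explicit cycles step and simply points to ``the technique'' of Lemma \ref{mono-center}---so your write-up is, if anything, a slight expansion of the same argument.
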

\begin{proof}
 Analogous to the proof of Lemma \ref{sums} we get 
 that $Z_{\gr}^+(\La)$ is generated by permutation homogeneous elements.
 Assume $\sum_i p_i\in Z_{\gr}(\La)$ is a permutation homogeneous element.
 Then, for any $a\in Q_1$, we have $\sum_i ap_i=\sum_i (-1)^{|p_i|}p_ia$. 
 Since $\sum_p p_i$ is permutation homogeneous we have, for all $p_i$ in the sum, that $|p_i|=c$
 for a fixed number $c\in \mathbb{N}$. Assume $p_k=a_1a_2\dots a_n$ and $a_1p_k\neq 0$, then there exists 
 an $p_l$ such that $a_1p_k=(-1)^jp_la_1$ and by Lemma \ref{comm-bin} and Lemma \ref{anti-comm-sign}
 $p_k\sim p_l$, i.e. $(a_1a_m)$ is an allowed transposition for all $2\le m\le n$. Inductively
 we get that $(a_ia_j)$ are allowed transpositions for all $1\le i,j\le n$ (see proof of
 Theorem \ref{mono-center} for the technique). Hence $p_k=p_i$ for all $p_i$ in the sum
 $\sum_i p_i$ and the graded center is generated by monomials.
 \end{proof}

\begin{prop}\label{graded-center}
 Let $I$ be a square-free commutativity ideal. Then $Z_{\gr}(\La)=Z^{\ev}(\La)$.
\end{prop}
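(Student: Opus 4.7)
The plan is to prove the two inclusions $Z^{\ev}(\La)\subseteq Z_{\gr}(\La)$ and $Z_{\gr}(\La)\subseteq Z^{\ev}(\La)$ separately. The first inclusion is essentially automatic: if $p\in Z^{\ev}(\La)$ is homogeneous of degree $2k$, then for any homogeneous $q$ one has $pq=qp=(-1)^{(2k)|q|}qp$, so $p$ trivially sits in the graded center. The content is in the reverse inclusion, and the task reduces cleanly to showing that every nonzero monomial generator of $Z_{\gr}^+(\La)$ has even degree.

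For the hard direction, I will invoke Lemma \ref{graded-monomial} to reduce to monomials: any element of $Z_{\gr}^+(\La)$ is a linear combination of nonzero monomials $p=a_1a_2\dots a_n$ of loops at a common basepoint, all of whose pairs are allowed transpositions. Since $I$ is a \emph{commutativity} ideal, every allowed transposition actually means $a_ia_j=a_ja_i$ in $\La$. I then test graded-commutation against the arrow $b=a_1$: because all arrows in $p$ commute, one has $a_1p=a_1^2a_2\dots a_n=pa_1$, and the graded-centrality condition forces $a_1p=(-1)^{|a_1|n}pa_1=(-1)^n a_1p$.

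The crux of the argument is therefore to ensure $a_1p\neq 0$, so that the sign $(-1)^n$ must equal $1$ and $n$ must be even. This is exactly where square-freeness enters: by Corollary \ref{cor-square}, if $a_1^2 a_2\dots a_n\in I$ while $a_1a_2\dots a_n\notin I$, then $a_1^2\in I$, contradicting the assumption that $I$ is square-free. I expect this invocation of Corollary \ref{cor-square} to be the only subtle step. Once evenness of $n$ is established, graded-commutation against an arbitrary homogeneous $q$ collapses to $pq=qp$, showing that $p$ is actually central, hence $p\in Z^{\ev}(\La)$, which completes the proof.
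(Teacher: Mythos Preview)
Your proof is correct and follows essentially the same strategy as the paper: reduce to monomials via Lemma~\ref{graded-monomial}, then use the graded-center sign condition together with square-freeness to force even degree. The only cosmetic difference is that the paper tests $p$ against itself (obtaining $p^2=-p^2$ and then arguing $p^2\neq 0$), whereas you test against the single arrow $a_1$ and invoke Corollary~\ref{cor-square} to get $a_1p\neq 0$; both arguments hinge on the same nonvanishing coming from square-freeness.
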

\begin{proof}
 It is clear that $Z^{\ev}(\La)\subseteq Z_{\gr}(\La)$, since if $|p|=2k$ and $p\in Z(\La)$ 
 we have that 
 $pq=(-1)^{(2k)|q|}qp=qp$ for all $q\in \La$.
 
 Assume that $p\neq 0$ is a monomial, $p\in Z_{\gr}(\La)$ and $|p|$ odd. Then for any $q\in \La$
 we have that $pq=(-1)^{|p||q|}qp$. Hence $p^2=-p^2$. Since $p\in Z(\La)$ and $I$ square-free
 we have that $p^2\neq 0$ which gives a contradiction and hence
 $Z_\gr(\La)$ does not contain any elements of odd degree. 
\end{proof}

\begin{prop}\label{graded-anti}
 Let $I$ be a square-free anti-commutativity ideal. Then $Z_{\gr}(\La)=Z^\ev(\La)$.
\end{prop}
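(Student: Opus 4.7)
The plan is to adapt the proof of Proposition \ref{graded-center} to the anti-commutative setting, where the main technical shift is from commutation of central elements to sign bookkeeping.

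First I would dispose of $Z^\ev(\La) \subseteq Z_\gr(\La)$ in a line: if $p\in Z(\La)$ has even degree then $(-1)^{|p||q|}=1$ for every $q$, so ordinary centrality implies graded centrality.

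For the reverse inclusion I would argue by contradiction: suppose $Z_\gr(\La)$ contains a nonzero homogeneous element of odd degree. By Lemma \ref{graded-monomial}, $Z_\gr^+(\La)$ is spanned by monomials $p=a_1 a_2\dots a_n$ of loops at a common basepoint in which every pair $(a_ia_j)$ is an allowed transposition, so I may assume $p$ is such a monomial with $n$ odd. Applying the graded commutation relation $pq=(-1)^{|p||q|}qp$ with $q=p$ gives $p^2 = (-1)^{n^2}p^2 = -p^2$, whence $2p^2 = 0$ and thus $p^2 = 0$ since $\mathrm{char}\,K \neq 2$.

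The main obstacle is then to exhibit $p^2 \neq 0$, which yields the contradiction. I would proceed as follows. First, the arrows $a_1,\dots,a_n$ must be pairwise distinct: otherwise some $(a_ia_i)$ would be an allowed transposition, forcing $2a_i^2\in I_2$ and thus $a_i^2\in I_2$ (as $\mathrm{char}\,K\neq 2$), contradicting square-freeness. Hence $a_ia_j=-a_ja_i\neq 0$ for every $i\neq j$, and $a_i^2\neq 0$ for every $i$. A short calculation gives $a_i^2 a_j = -a_ia_ja_i = a_ja_i^2$, so each $a_i^2$ is central. Repeatedly anti-commuting then lets me rewrite $p^2 = \pm a_1^2 a_2^2 \dots a_n^2$. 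Finally, by Lemma \ref{anti-comm-mono}, for this product to lie in $I$, some equivalent monomial would have to contain a length-$2$ subpath in $I$; but every such subpath is of the form $a_ia_j$ with $i,j\in\{1,\dots,n\}$, and none of these lies in $I$, either by square-freeness when $i=j$ or by the allowed-transposition hypothesis when $i\neq j$. Hence $p^2\neq 0$, completing the argument.
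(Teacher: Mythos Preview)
Your argument is essentially correct and takes a genuinely different route from the paper. The paper does not use $q=p$. Instead it selects an arrow $a_i$ appearing in $p$ with \emph{odd} multiplicity (such an arrow exists since $|p|$ is odd), notes that $a_ip\neq 0$ by square-freeness, and applies Lemma~\ref{anti-comm-sign} to the relation $a_ip+pa_i\in I$: this forces the number of allowed transpositions carrying $a_ip$ to $pa_i$ to be odd. But moving $a_i$ across $p$ costs one transposition for each arrow of $p$ distinct from $a_i$, so the parity is that of $n-m$ where $m$ is the multiplicity of $a_i$ in $p$; with $n$ odd this forces $m$ to be even, contradicting the choice of $a_i$. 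Your approach via $p^2=-p^2$ mirrors the proof of Proposition~\ref{graded-center} more closely and avoids the transposition count, which is arguably cleaner.

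One point needs repair: your claim that the arrows $a_1,\dots,a_n$ are pairwise distinct is not justified. Lemma~\ref{graded-monomial} (like Lemma~\ref{mono-center}) only asserts that every pair of \emph{distinct} arrow values occurring in $p$ anti-commutes; it does not exclude repetitions, and indeed Theorem~\ref{vspsemi} explicitly exhibits central monomials with repeated arrows. So the inference ``otherwise some $(a_ia_i)$ would be an allowed transposition'' fails. Fortunately the distinctness is unnecessary: your final paragraph already proves $p^2\neq 0$ without it, since any length-$2$ subpath of any monomial equivalent to $p^2$ is a product of two arrows occurring in $p$, and no such product lies in $I$ (by square-freeness when the arrows coincide, by the allowed-transposition hypothesis when they differ). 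Simply delete the distinctness claim and the intermediate sorting step, and apply the Lemma~\ref{anti-comm-mono} argument directly to $p^2=a_1\cdots a_n a_1\cdots a_n$; the proof then stands.
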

\begin{proof}
 It is clear that $Z^{\ev}(\La)\subseteq Z_{\gr}(\La)$ (see proof of the previous lemma for details). 
 Assume $p\in Z_{\gr}(\La)$ and $|p|$ odd. 
 Since $p\in Z_{\gr}(\La)$ we have $bp=-pb$ for all $b\in Q_1$. 
 Let $p=a_1a_2 \dots a_n$.
 We have that $I$ is square-free, and hence, by Lemma \ref{graded-monomial} $a_ip\neq 0$
 for any $1\le i\le n$. Since $|p|$ is odd we can find an $a_i$ such that $p$ contains an odd 
 number of copies of $a_i$.
 Since $p$ is assumed to be in the center we have $a_1p=-pa_i$ which by Lemma \ref{anti-comm-sign} 
 implies that the number of transpositions needed to transform
 $a_ip$ to $pa_i$ is odd, and hence we need an even number of copies of $a_i$ in the monomial $p$.
 This gives a contradiction and hence we have no elements of odd degree in $Z_\gr(\La)$. 
  \end{proof}

\begin{lemma}\label{even-center}
 Let $I$ be a commutativity ideal such that $I^\perp$ is admissible.
 Then $Z^\ev(\La)$ is finitely generated as a $K$-algebra if and only if $Z(\La)$ is finitely generated.
\end{lemma}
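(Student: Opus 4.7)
The strategy is to reduce to a single vertex via the decomposition $Z^+(\La) = \bigoplus_{x \in Q_0} Z^+(\La_x)$ (noted after Definition \ref{subquiver-ideal}), which restricts to $Z^\ev$ as well. Since products across distinct vertices vanish, $Z(\La)$ is finitely generated as a $K$-algebra iff each $Z(\La_x)$ is, and similarly for $Z^\ev$. I fix a vertex $x$ and write $Z, Z^\ev$ for $Z(\La_x), Z^\ev(\La_x)$. Observe that $I^\perp$ admissible forces $I$ square-free by Lemma \ref{square-free-admissible}, so Theorem \ref{vsp}, Proposition \ref{deg1}, Proposition \ref{nilpotent}, and Lemma \ref{sq-cen} all apply. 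If $Z^\ev$ is trivial then $Z$ is trivial: a nonzero odd-degree $p \in Z$ would give $p^2 \in Z^\ev \cap Z^+ = 0$, a nilpotent forbidden by Proposition \ref{nilpotent}.

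The forward direction is immediate from Proposition \ref{deg1}: if $Z$ is nontrivial and finitely generated, then $Z^+$ is generated in degree $1$ by finitely many central loops $a_1, \ldots, a_r$, and the even subalgebra of a commutative algebra generated in degree $1$ is generated by the finitely many degree-$2$ products $a_i a_j$.

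For the converse, assume $Z^\ev$ is nontrivially finitely generated. Since $Z^\ev$ is graded with a basis of central monomials by Theorem \ref{vsp}, we may take its generators to be central monomials $p_1, \ldots, p_n$ of even degree. The key claim is that every arrow appearing in any $p_i$ lies in $Z$. Suppose for contradiction $a$ appears in some $p_j$ but $a \notin Z$. Among generators containing $a$, pick $p_1 = a^m \prod_{b \neq a} b^{r_b}$ with minimal non-$a$ total degree $d = \sum r_b$ and, among those, maximal $a$-exponent $m$. The monomial $q = a^{m+2} \prod_{b \neq a} b^{r_b}$ has the same (admissible) support as $p_1$ and the same degree parity, so by Theorem \ref{vsp} it lies in $Z^\ev$. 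Writing $q$ as a polynomial in $p_1, \ldots, p_n$ and comparing monomial terms forces some single product $\prod_i p_i^{\alpha_i}$ to equal $q$. If some contributing factor has support $\{a\}$ alone, it is a pure power of $a$, and Lemma \ref{sq-cen} yields $a \in Z$, a contradiction. Otherwise, by minimality of $d$, every contributing factor containing $a$ has non-$a$ degree $\geq d$, so matching the total non-$a$ degree forces exactly one such factor $p_{i^*}$ to appear with $\alpha_{i^*} = 1$ and non-$a$ degree exactly $d$; maximality of $m$ then gives the contradiction $m + 2 = m_{i^*} \leq m$.

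With the claim established, let $L_x$ be the (finite) set of arrows appearing in any $p_i$, all of which are central. For any central monomial $\mu$, $\mu^2 \in Z^\ev$ equals a single product of the $p_i$ by the same monomial comparison, so the support of $\mu$ (equal to that of $\mu^2$) is contained in $L_x$. Hence every central monomial is a product of central loops in $L_x$, giving $Z = K[L_x]$, finitely generated as a $K$-algebra. The main obstacle is the minimality/maximality bookkeeping when non-$a$ arrows carry multiplicities greater than one, handled by ordering generators first on the total non-$a$ degree $d$ rather than on the size of the support, as would suffice in the pure-support setting of Proposition \ref{deg1}.
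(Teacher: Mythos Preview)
Your proof is correct and follows essentially the same strategy as the paper: both directions hinge on showing that every arrow appearing in a generator of $Z^\ev$ is itself central, via the minimality/maximality trick of Proposition~\ref{deg1}. The paper first invokes that technique to reduce $Z^\ev$ to degree-$2$ generators and then runs a shorter version of the argument on the monomial $a^{2k+1}b$, whereas you apply the trick directly to arbitrary-degree generators and also spell out the concluding support step ($\operatorname{supp}(\mu)\subseteq L_x$), which the paper leaves implicit.
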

\begin{proof}
 By Proposition \ref{deg1}, if $Z(\La)$ is finitely generated, then it is generated in degree $1$. A finite 
 number of degree $1$ generators gives a finite number of degree $2$ monomials, and hence $Z^{\ev}(\La)$ is
 finitely generated if $Z(\La)$ is finitely generated. 
 
 Assume $Z^{\ev}(\La)$ finitely generated. By the same technique as in the proof of Proposition \ref{deg1} 
 we get that it is generated
 in degree $2$. 
 Assume $ab$ is one of the generators and that $a\neq b$. Assume $a\notin Z(\La)$, then $a^2\notin Z(\La)$, 
 since that would contradict Lemma \ref{sq-cen}. By Theorem 
 \ref{vsp} we have that $a^{2k+1}b\in Z^\ev(\La)$ for all $k\ge 0$. Since $a^{2k+1}b$ can't be obtained as
 a product of elements in degree $2$, unless $a^2$ is one of the generators, we get a contradiction
 to our assumption that $Z^\ev(\La)$ is finitely generated. 
\end{proof}

\begin{lemma}\label{anti-even-center}
 Let $I$ be a anti-commutativity ideal such that $I^\perp$ is admissible.
 Then $Z^\ev(\La)$ is finitely generated as a $K$-algebra if and only if $Z(\La)$ is finitely generated. 
 Moreover
 $Z^{\ev}(\La)=Z(\La)$. 
\end{lemma}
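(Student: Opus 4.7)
The plan is to mirror the proof of Lemma \ref{even-center}, substituting the anti-commutative tools for their commutative counterparts, and to establish the additional equality $Z^{\ev}(\La)=Z(\La)$ as the crux of the argument; the finite-generation equivalence is then immediate.

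The key step is to show that $Z(\La)$ has no odd-degree elements. By Lemma \ref{mono-center} it suffices to consider non-zero monomials, so suppose toward a contradiction that $p=a_1\cdots a_n\in Z(\La)$ has odd length $n$. The odd case of Theorem \ref{vspsemi} tells us that $p$ is a product of pairwise anti-commuting loops at a common basepoint, each arrow occurring an odd number of times, and that for every arrow $b$ outside $\{a_1,\dots,a_n\}$ there exist indices $i,j$ with $a_ib=0=ba_j$. Combining these annihilation conditions with the anti-commutation relations among the $a_\ell$ and invoking Lemma \ref{lemma-semi-onepoint}, I would exhibit a directed cycle in $\Gamma_I$, contradicting Theorem \ref{adm} together with the hypothesis that $I^\perp$ is admissible.

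For completeness I would also verify the two implications of the iff directly along the lines of Lemma \ref{even-center}: if $Z(\La)$ is finitely generated, Lemma \ref{deg2} allows the generating set to be chosen of the form $a^2$, all of which lie in $Z^{\ev}(\La)$; conversely, if $Z^{\ev}(\La)$ is finitely generated one mimics the commutative argument using Lemma \ref{sq-cen-anti} in place of Lemma \ref{sq-cen} to recover a finite generating set of $Z(\La)$. The principal obstacle is the parity argument itself. Proposition \ref{graded-anti} dispatches the analogous statement for the graded center almost for free via the super-sign $bp=-pb$, but in the ungraded setting one only has $bp=pb$ and must extract a sign obstruction indirectly from the annihilation conditions of Theorem \ref{vspsemi} — this is where the admissibility of $I^\perp$ (equivalently, acyclicity of $\Gamma_I$) does the essential work, by converting the would-be annihilations into a directed cycle.
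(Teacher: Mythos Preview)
Your core step—showing that $Z(\La)$ has no odd-degree elements under the sole hypothesis that $I^\perp$ is admissible—does not go through. Take $Q$ to be a single vertex with three loops $a,b,c$ and $I=\langle ab+ba,\,ac+ca,\,bc+cb\rangle$. Here $I^\perp=\langle a^2,b^2,c^2,\,ab-ba,\,ac-ca,\,bc-cb\rangle$ is admissible (every path of length $\ge 4$ is equivalent, via the commutations in $I^\perp$, to one containing a square), yet $abc$ is a non-zero central element of $KQ/I$ of degree $3$: one checks directly that $a\cdot abc=a^2bc=abc\cdot a$, and similarly for $b$ and $c$. Your sketch proposes to extract a directed cycle in $\Gamma_I$ from the annihilation clauses of Theorem~\ref{vspsemi}, but in this example there are no arrows outside $\{a,b,c\}$, so the condition ``for all other arrows $b$ there exist $i,j$ with $a_ib=0=ba_j$'' is vacuous and produces no edges whatsoever; Lemma~\ref{lemma-semi-onepoint} is likewise inapplicable, since its hypothesis concerns a central \emph{square} $a^2$, not an odd-length monomial.

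The paper does not attempt the unconditional equality. It argues each implication through the finite-generation hypothesis itself: assuming $Z(\La)$ is finitely generated, Lemma~\ref{deg2} supplies generators of the form $a^2$, whence $Z(\La)=Z^{\ev}(\La)$; conversely, assuming $Z^{\ev}(\La)$ is finitely generated, one reruns the minimality argument of Lemma~\ref{deg2} to obtain degree-$2$ generators and then invokes the even-parity clause of Theorem~\ref{vspsemi} (a central monomial of even length contains each arrow an even number of times) to force any degree-$2$ generator $ab$ to satisfy $a=b$. Thus the equality $Z^{\ev}(\La)=Z(\La)$ appears only inside the finite-generation argument, not as a standalone fact; your ``for completeness'' paragraph is much closer to this line and is the part you should develop, while the attempted parity obstruction from admissibility of $I^\perp$ alone should be dropped.
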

\begin{proof}
By Lemma \ref{deg2} we have that if $Z(\La)$ is finitely generated, 
then it is generated in degree $2$ and hence $Z(\La)=Z^{\ev}(\La)$, i.e.
if $Z(\La)$ is finitely generated then so is $Z^{\ev}(\La)$.

Assume $Z^\ev(\La)$ is finitely generated. By the same technique as in the proof of Lemma \ref{deg2} 
 we get that it is generated in degree $2$. 
 Assume that $ab\in Z^\ev(\La)$, then Theorem \ref{vspsemi}
 gives that $a=b$ (since if $a\neq b$, then $ab\notin Z(\La)$) and hence $Z(\La)$ is finitely 
 generated and $Z(\La)=Z^\ev(\La)$. 
\end{proof}

\begin{lemma}\label{even-nilpotent}
 Let $I$ be an (anti-)commutativity ideal such that $I^\perp$ is admissible. 
 Then $Z^{ev}(\La)$ does not contain any nilpotent elements.
\end{lemma}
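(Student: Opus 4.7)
The plan is to reduce the statement to Propositions \ref{nilpotent} and \ref{nilpotent-anti}, which already give the nilpotent-free conclusion for the full center $Z(\Lambda)$ under a square-freeness assumption on $I$. The bridge I need to build is: admissibility of $I^\perp$ forces $I$ itself to be square-free.

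First, I will argue that the hypothesis ``$I^\perp$ is admissible'' implies that $I$ contains no loop-squares. Suppose for contradiction $a^2 \in I$ for some loop $a$. Then $a^2 = 0$ in $KQ/I$, so the third clause in the definition of the orthogonal ideal does not place $a^2$ into $I^\perp_2$, and the binomial clauses obviously cannot do so either. Now, any monomial equivalent to $a^n$ under allowed transpositions in $I^\perp$ is again $a^n$, since only one arrow is involved. By Lemma \ref{comm-mono} (or Lemma \ref{anti-comm-mono}), membership $a^n \in I^\perp$ would force $a^n$ to contain $a^2 \in I^\perp_2$ as a sub-monomial, which we have just ruled out. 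Consequently no power of $a$ lies in $I^\perp$, contradicting admissibility of $I^\perp$. Thus $a^2 \notin I$ for every loop, i.e. $I$ is square-free.

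Second, with $I$ square-free, Proposition \ref{nilpotent} applies in the commutative case, and Proposition \ref{nilpotent-anti} in the anti-commutative case. Either way, the full center $Z(\Lambda)$ contains no nilpotent elements whatsoever.

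Third, by definition
\[
Z^{\ev}(\Lambda) \;=\; \bigoplus_{k \geq 0} Z^{2k}(\Lambda) \;\subseteq\; Z(\Lambda),
\]
so the absence of nilpotents in the ambient ring $Z(\Lambda)$ immediately descends to the subring $Z^{\ev}(\Lambda)$.

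The only delicate step is the first one, where the admissibility hypothesis on $I^\perp$ must be translated into the square-freeness of $I$; the remaining two steps are direct invocations of previously established results and a trivial inclusion.
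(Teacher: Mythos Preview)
Your proof is correct and follows essentially the same route as the paper: show that admissibility of $I^\perp$ forces $I$ to be square-free, then invoke Propositions~\ref{nilpotent} and~\ref{nilpotent-anti}, and finally restrict to the even subring. The only difference is cosmetic: the paper dispatches the first step by citing Lemma~\ref{square-free-admissible} (applied with the roles of $I$ and $I^\perp$ interchanged), whereas you spell out that argument directly.
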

\begin{proof}
 By Lemma \ref{square-free-admissible} $I$ is a square-free ideal and hence
 the result follows directly from Proposition \ref{nilpotent} (and Proposition \ref{nilpotent-anti}).
\end{proof}

\section{Finite generation of the Hochschild cohomology ring}

Let $\La=KQ/I$ be a quiver algebra for an admissible ideal $I$. If $\La$ is a Koszul algebra, its Koszul dual
is defined by $\La^!:=KQ^{\op}/I_\mrm{o}^\perp$, where $Q^\op$ is the quiver such that $Q^\op_0=Q_0$ and 
if $a\in Q_1$, with $\ori(a)=x$ and $\tar(a)=y$, then $a^\mrm{o}\in Q^\op_1$ with $\ori(a^\mrm{o})=y$ 
and $\tar(a^\mrm{o})=x$. If $p=a_1a_2\dots a_n\in KQ$ we let $p^\mrm{o}=a_n^\mrm{o}a_{n-1}^\mrm{o}\dots a_1^\mrm{o}\in KQ^\op$.
 If $I$ is an ideal of $KQ$, let $I_\mrm{o}$ be the ideal of $KQ^\op$ such that 
 $\sum_{i=1}^n p_i\in I$ if and only if $\sum_{i=1}^n p_i^\mrm{o}\in I_\mrm{o}$.
 
 If $\La$ is partly commutative, then $\La^!$ will be partly anti-commutative. 
By \cite{buchweitz2008multiplicative} (Theorem 4.1) and \cite{green1998koszul} (Theorem 2.2) 
there is an isomorphism $\HH(\La)/\Ni\cong Z_\gr(\La^!)/\Ni_Z$, provided that $\La$ is a Koszul algebra.
In general it is very hard to tell if an algebra is Koszul or not, 
but there are some examples of partly (anti-)commutative quiver algebras that are Koszul algebras. The
theorems in this chapter give necessary and sufficient conditions for a partly (anti-)commutative Koszul quiver
algebra to have finitely generated Hochschild cohomology ring modulo nilpotent elements.

 To use the sufficient conditions from Theorem \ref{rel-graph} and Theorem \ref{rel-graph-anti} 
 we need the following theorem.
\begin{thm}\label{hochschild}
 Let $I$ be an (anti-)commutativity ideal and $\La=KQ/I$ a Koszul algebra. The 
 Hochschild cohomology ring of $\La$ modulo nilpotence, $\HH(\La)/\Ni$, is finitely generated if and only if
 $Z(\La^!)$ is finitely generated. 
\end{thm}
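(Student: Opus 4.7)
The strategy is to assemble this theorem by chaining together the Koszul duality isomorphism $\HH(\La)/\Ni\cong Z_\gr(\La^!)/\Ni_Z$ from \cite{buchweitz2008multiplicative} and \cite{green1998koszul} with the lemmas of Section \ref{the_graded_center}. In a nutshell, I will show that, under the present hypotheses, $\Ni_Z$ vanishes in $Z_\gr(\La^!)$ and that $Z_\gr(\La^!)$ coincides with $Z^\ev(\La^!)$, so the finite generation question transfers cleanly from the graded to the ungraded center via Lemma \ref{even-center} (resp.\ Lemma \ref{anti-even-center}).

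First I would observe that $\La^!=KQ^{\op}/I_\mrm{o}^\perp$ is itself a partly (anti-)commutative quiver algebra: if $I$ is a commutativity ideal, then by the definition of the orthogonal ideal, $I_\mrm{o}^\perp$ is an anti-commutativity ideal on $Q^{\op}$, and symmetrically in the anti-commutative case. Since $\La$ is Koszul, $I$ is an admissible quadratic ideal, so by Lemma \ref{square-free-admissible} the orthogonal ideal $I^\perp$ is square-free, and hence so is $I_\mrm{o}^\perp$. Moreover $(I_\mrm{o}^\perp)^\perp=I_\mrm{o}$ is admissible since $I$ is. These are exactly the hypotheses required to apply the lemmas of Section \ref{the_graded_center} to $\La^!$.

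With this in hand, I would invoke Proposition \ref{graded-center} (in the case where $\La^!$ is partly commutative) or Proposition \ref{graded-anti} (when it is partly anti-commutative) to obtain $Z_\gr(\La^!)=Z^\ev(\La^!)$. Next, Lemma \ref{even-nilpotent} shows that $Z^\ev(\La^!)$ contains no nilpotent elements, so $\Ni_Z=0$ and the isomorphism reduces to $\HH(\La)/\Ni\cong Z^\ev(\La^!)$. Finally, Lemma \ref{even-center} (resp.\ Lemma \ref{anti-even-center}), applied with $\La^!$ in place of $\La$, yields that $Z^\ev(\La^!)$ is finitely generated as a $K$-algebra if and only if $Z(\La^!)$ is, completing both directions.

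The mathematical substance has all been done in the preceding sections, so there is no real difficulty left. The only place where care is needed is the bookkeeping: one must track that the commutativity/anti-commutativity flavour flips under Koszul duality, and verify that the admissibility and square-freeness hypotheses of the cited lemmas are met for $I_\mrm{o}^\perp$ rather than for $I$ itself. Once that is checked, the theorem follows by stringing together the listed isomorphisms and equivalences.
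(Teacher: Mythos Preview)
Your proposal is correct and follows essentially the same route as the paper's proof: invoke the isomorphism $\HH(\La)/\Ni\cong Z_\gr(\La^!)/\Ni_Z$, then apply Proposition~\ref{graded-center}/\ref{graded-anti} and Lemma~\ref{even-nilpotent} to identify $Z_\gr(\La^!)/\Ni_Z$ with $Z^\ev(\La^!)$, and finally use Lemma~\ref{even-center}/\ref{anti-even-center} to pass to $Z(\La^!)$. Your additional paragraph verifying that $I_\mrm{o}^\perp$ is square-free with admissible orthogonal is a useful sanity check that the paper leaves implicit.
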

\begin{proof}
 We have that $\HH(\La)/\Ni=Z_\gr(\La^!)/\Ni_Z$ and by Proposition \ref{graded-center}/\ref{graded-anti}
 and Lemma \ref{even-nilpotent} we have that 
 $Z_\gr(\La^!)/\Ni_Z=Z^\ev(\La^!)$. By Lemma \ref{even-center}/\ref{anti-even-center} we have that
 $Z^\ev(\La^!)$ is finitely generated if and only if $Z(\La^!)$ is finitely generated.  
\end{proof}

\begin{exmp}
\begin{itemize}
 \item[(i)]
 Let $\La=KQ/I$ be the algebra described in Example \ref{ex1}(i) and Example \ref{cen-ex}, 
 and let $x$ be the left and $y$
 the right vertex. 
 This algebra is a Koszul algebra (see \cite{snashall2008support})
 and we have that $Z((\La^!)_y)$ is trivial. For $Z((\La^!)_x)$ we draw the relation graph of 
 $KQ^\mrm{op}_x/(I^\perp_\mrm{o})_x$:
 $$\xymatrix{ a \ar@{-}[rr] \ar[dr] & & b\ar@/^/[dl]\\& c  \ar@/^/[ur] &
 }$$
 
 We see that $Z((\La^!)_x)$ is not finitely generated since we don't have any directed edge from
 $a$ to $c$. 
 Hence $\HH(\La)$ is not finitely generated (this is also proven in \cite{snashall2008support}). 
 
 \item[(ii)]
 Let $\La=KQ/I$ be the algebra described in Example \ref{ex1}(ii), and let $x$ be the left and $y$
 the right vertex. 
 The algebra described in Example \ref{ex1}(ii) is a Koszul algebra since it is a quadratic, monomial 
 algebra (see \cite{monomial}).
 
 Both $Z(\La_x)$ and $Z(\La_y)$ are trivial. This can be realized in two ways, either we use Theorem
 \ref{vsp} and see that the only potential elements in the center are of the form $a^mb^m$, but that 
 requires $a$ and $b$ to commute, and since they don't, none of these can be in the center. The other 
 way is to draw the relation graph of $KQ^\mrm{op}_x/(I^\perp_\mrm{o})_x$.
 $$\xymatrix{ a \ar@/^/[r] & c \ar@/^/[l] \ar[d] \\b \ar[ur] \ar@/^/[r] & d \ar@/^/[l] \ar[ul]
 }$$
 Obviously none of the cliques of loops, $\{a\}$ and $\{b\}$, have directed edges to and from all
 other vertices or are contained in bigger cliques. Hence $\HH(\La)$ is trivial.
 
 \item[(iii)]Let $Q$ be the quiver in Example \ref{ex1}(iii). Let $I$ be the admissible ideal generated by
 $\langle a^2, b^2, c^2, d^2, cd, dc, ab+ba, ac+ca, ad+da, bc+cb, bd+db\rangle$. Then $KQ/I$ is a 
 Koszul algebra. The relation graph of $KQ^\op/I^\perp_\mrm{o}$ looks as follows:
 $$\xymatrix{a \ar@{-}[r] \ar@{-}[d] \ar@{-}[dr] & c \ar@{-}[d]\\
 d \ar@{-}[r]&b
 }$$
 Here, the clique $\{a,b\}$ have the property that if we adjoin any of the other vertices we get a 
 bigger clique and both $a$ and $b$ have this property on their own. Hence 
 $Z(\La^!)$ is generated by $a$ and $b$, and   
 we have that $\HH(\La)$ is finitely generated.

\end{itemize}

\end{exmp}

{\footnotesize\bibliography{references}{}}
\bibliographystyle{alpha}
\end{document}